\documentclass{birkjour}
\usepackage{amsmath, amssymb, amscd, amsthm,amsfonts}

\theoremstyle{plain}
\newtheorem{thm}{Theorem}[section]
\newtheorem{cor}[thm]{Corollary}
\newtheorem{lem}[thm]{Lemma}
\newtheorem{prop}[thm]{Proposition}
\newtheorem{conj}[thm]{Conjecture}
\theoremstyle{definition}
\newtheorem{defn}[thm]{Definition}
\theoremstyle{remark}
\newtheorem{rem}[thm]{Remark}

\numberwithin{equation}{section}

\begin{document}

%-------------------------------------------------------------------------
% editorial commands: to be inserted by the editorial office
%
%\firstpage{1} \volume{228} \Copyrightyear{2004} \DOI{003-0001}
%
%
%\seriesextra{Just an add-on}
%\seriesextraline{This is the Concrete Title of this Book\br H.E. R and S.T.C. W, Eds.}
%
% for journals:
%
%\firstpage{1}
%\issuenumber{1}
%\Volumeandyear{1 (2004)}
%\Copyrightyear{2004}
%\DOI{003-xxxx-y}
%\Signet
%\commby{inhouse}
%\submitted{March 14, 2003}
%\received{March 16, 2000}
%\revised{June 1, 2000}
%\accepted{July 22, 2000}
%
%
%
%---------------------------------------------------------------------------
%Insert here the title, affiliations and abstract:
%

\def \al {{\alpha}}
\def \T {{\mathbb T}}
\def \D {{\mathbb D}}
\def \C {{\mathbb C}}

\title[Invariance in $H^\al$ spaces]{Multiplication by finite Blaschke factors on a general class of Hardy spaces}

%----------Author 1
\author[A. Singh]{Apoorva Singh}

\address{Department of Mathematics\\
School of Natural Sciences\\
Shiv Nadar University\\
Gautam Budh Nagar - 201314.\\
Uttar Pradesh, India}

\email{as721@snu.edu.in}

%----------Author 2
\author[N. Sahni]{Niteesh Sahni}
\address{Department of Mathematics\br
School of Natural Sciences\br
Shiv Nadar University\br
Gautam Budh Nagar - 201314.\br
Uttar Pradesh, India}
\email{niteesh.sahni@snu.edu.in}
%----------classification, keywords, date
\subjclass{Primary 47A15; Secondary 30H10, 47B38 }

\keywords{Normalized gauge norm, Blaschke factor, Invariant subspace, Rotationally symmetric norm, Hardy space, Lebesgue space, Inner-outer factorization.}

\begin{abstract}
A broader class of Hardy spaces and Lebesgue spaces have been introduced recently on the unit circle by considering continuous $\|.\|_1$-dominating normalized gauge norms instead of the classical norms on measurable functions and a Beurling type result has been proved for the operator of multiplication by the coordinate function. In this paper, we generalize the above Beurling type result to the context of multiplication by a finite Blaschke factor $B(z)$ and also derive the common invariant subspaces of  $B^2(z)$ and $B^3(z)$. These results lead to a factorization result for all functions in the Hardy space equipped with a continuous rotationally symmetric norm.
\end{abstract}

%%% ----------------------------------------------------------------------
\maketitle
%%% ----------------------------------------------------------------------
%\tableofcontents
\section{Introduction}\label{sec1}
The study of invariant subspaces of the shift operator has been an area of extensive research over the past decades. After the landmark success of obtaining the invariant subspaces of $H^2$ under the multiplication with the monomial $z$ by Beurling \cite{Beurling1948}, many new directions of extensions have emerged. Some of the notable ones are the Helson-Lowdenslager theorem \cite{invariant_subspaces_Helson} which characterizes the simply invariant subspaces of $L^2$, Weiner’s theorem \cite{helson2017lectures} which characterizes the doubly invariant subspaces of $L^2$, the theorems by Lax \cite{lax1959translation} and Halmos \cite{Halmos1961Shifts} which study the invariance under the operator of multiplication by $z^n$, and the characterization of contractively contained sub Hilbert spaces of $H^2$ by de Branges \cite{de2015square}. Furthermore, in the recent past, the validity of the above results have been proved in the context of general Lebesgue $L^p$ and Hardy spaces $H^p$  $(0 \leq p \leq \infty)$ by \cite{Srinivasan1963simply}, \cite{Srinivasan1964doubly}, \cite{Lance1997multiplication}, \cite{Paulsen2001debranges}, and \cite{Agarwal1995debranges}. It is worth pointing out that in the works cited above, the authors have studied invariance under the multiplier algebra $H^\infty(z)$. For a finite Blaschke factor $B(z)$, invariance in $H^p$ and $L^p$ have been explored under the algebra $H^\infty (B)$ = $\{ f(B(z)) : f \in H^\infty\}$ in \cite{sahni2012lax}, \cite{Kumar2016invariance}, and \cite{SnehLata2010finite}.
\par
The common invariant subspaces have also been a focus of attention in the recent past. This encapsulates the discovery of invariant subspaces of $H^p$ under the algebras $H_1^\infty (z)$ = $\{ f \in H^\infty : f^{\prime}(0)=0 \}$, $H_1^\infty (B)$ = $\{ f(B(z)) : f \in H^\infty_1\}$, and $ \C + BH^\infty (z)$. Some notable works on the same are \cite{Davidson2009constrained}, \cite{sahni2012lax}, and \cite{raghupati2009nevalinna}.
\par
In \cite{chen2017general}, Chen defined the continuous $\|.\|_1$-dominating normalized gauge norm $\al$ on the space of all measurable functions on the unit circle. The closure of $L^\infty$ with respect to the norm $\al$ is called the Lebesgue space and is denoted by $L^\al$, and the closure of $H^\infty$ under $\al$ is called the Hardy space $H^\al$. It has been proved in \cite{chen2017general} that both $L^\al$ and $H^\al$ are Banach spaces. For all $1 \leq p <\infty$, the classical Lebesgue spaces $L^p$ and the Hardy spaces $H^p$ on the unit circle are special cases of $L^\al$ and $H^\al$ respectively. It must be noted that $H^\infty$ does not belong to these special classes. In fact $H^\infty \subsetneq H^\al$. The simply invariant and doubly invariant subspaces of multiplication by the coordinate function $e^{i\theta}$ on $L^\al$ have also been obtained \cite{chen2017general}. Beurling's theorem, as usual, follows as a special case of the $L^\al$ result. 
\par
The statement of the general Beurling's theorem in  \cite{chen2017general} is:
\begin{thm}\label{Theorem1.1}
Suppose $\al$ is a continuous $\|.\|_1$-dominating normalized gauge norm and $\mathcal{M}$ is a non-trivial closed subspace of $H^\al$ invariant under the multiplication by the coordination function $e^{i\theta}$. Then, there is an inner function $\phi$ such that $\mathcal{M} = \phi H^\al$. (A function $\phi \in H^\infty$ is called inner if $\lvert \phi \rvert =1$ a.e.)
\end{thm}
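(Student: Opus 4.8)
The plan is to reduce Theorem~\ref{Theorem1.1} to the classification of $e^{i\theta}$-invariant subspaces of $L^\al$ obtained in \cite{chen2017general}, the one external ingredient being the inclusion $H^\al\subseteq H^1$, which holds because $\al$ dominates $\|\cdot\|_1$ and which hands us the inner--outer factorization together with the classical $H^1$ theory for every element of $\mathcal{M}$. I also record at the outset that $H^\al$, being the $\al$-closure of $H^\infty$ inside the Banach space $L^\al$, is a closed subspace of $L^\al$.

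Viewed inside $L^\al$, then, $\mathcal{M}$ is a closed subspace invariant under multiplication by $e^{i\theta}$. First I would show that $\mathcal{M}$ has no reducing (doubly invariant) part, by verifying $\bigcap_{n\ge1}\overline{e^{in\theta}\mathcal{M}}=\{0\}$ (closures in $L^\al$): indeed $\overline{e^{in\theta}\mathcal{M}}\subseteq e^{in\theta}H^\al\subseteq e^{in\theta}H^1$, and $\bigcap_{n\ge1}e^{in\theta}H^1=\{0\}$ because a function lying in every $e^{in\theta}H^1$ has all Fourier coefficients $0$. In particular $\mathcal{M}$ cannot be of the Wiener type $\chi_E L^\al$ for a measurable $E$: such a $\chi_E$ is not in $H^1$ unless it is a.e.\ constant, and the surviving alternative $\mathcal{M}=L^\al$ is ruled out since $\mathcal{M}\subseteq H^\al\subsetneq L^\al$, while $\mathcal{M}=\{0\}$ is excluded by hypothesis. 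Thus $\mathcal{M}$ lies in the simply (purely) invariant case of the $L^\al$ classification, and we get $\mathcal{M}=uH^\al$ for some $u\in L^\infty$ with $|u|=1$ a.e.

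It remains to see that $u$ is inner. The constant function $1$ lies in $H^\infty\subseteq H^\al$, so $u=u\cdot1\in uH^\al=\mathcal{M}\subseteq H^\al\subseteq H^1$; hence $u\in H^1$ and its boundary values are unimodular. Its outer factor is then an outer function whose modulus is $1$ at every point of $\D$, so it is a unimodular constant, and therefore $u$ itself lies in $H^\infty$ with $|u|=1$ a.e., i.e.\ $u$ is inner. Taking $\phi:=u$ gives $\mathcal{M}=\phi H^\al$.

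The genuine difficulty I expect to be buried in the ingredient I am quoting, namely the $L^\al$ classification of \cite{chen2017general}: neither the orthogonal-projection argument behind the $L^2$ Helson--Lowdenslager theorem nor the $H^p$ duality argument transcribes verbatim to the gauge-norm setting. If one wanted to build the proof from scratch, the route would be to first establish that $H^\al=\{f\in H^1:\al(f)<\infty\}$ and that polynomials are dense in $H^\al$ (both consequences of the continuity of $\al$, via Fej\'er means and a dominated-convergence principle for continuous gauge norms), and then to prove $\mathcal{M}=\phi H^\al$ where $\phi$ is the greatest common inner divisor of the inner parts of the nonzero members of $\mathcal{M}$. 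The inclusion $\mathcal{M}\subseteq\phi H^\al$ is painless, since $\al$ depends only on moduli and $|\phi|=1$; the reverse inclusion reduces, after dividing out $\phi$, to showing that a closed $e^{i\theta}$-invariant subspace of $H^\al$ whose inner parts have trivial greatest common divisor must be all of $H^\al$. This is the crux: one shows that outer functions are $\al$-cyclic --- here continuity of $\al$ is indispensable, through $fh_n\to1$ in $\al$ for suitable $h_n\in H^\infty$ with $|fh_n|\le1$ --- and then removes common Blaschke and singular factors by a limiting argument, the persistent subtlety being the gap between the $\|\cdot\|_1$-topology, in which the classical facts live, and the finer $\al$-topology, in which the conclusion is needed.
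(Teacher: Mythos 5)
Your argument is correct, but it follows the original route of \cite{chen2017general} rather than the one this paper actually takes for this statement. You quote the Helson--Lowdenslager-type classification of $e^{i\theta}$-invariant subspaces of $L^\al$ as a black box, rule out the doubly invariant alternative $\chi_E L^\al$ via $\mathcal{M}\subseteq H^\al\subseteq H^1$ (your intersection computation $\bigcap_n e^{in\theta}H^1=\{0\}$ and the observation that a characteristic function in $H^1$ is a.e.\ constant are both fine), and then upgrade the unimodular symbol $u$ to an inner function from $u\in\mathcal{M}\subseteq H^1$ with $|u^*|=1$ a.e. All of these steps are sound, and indeed the introduction of the paper concedes that ``Beurling's theorem, as usual, follows as a special case of the $L^\al$ result.'' The paper, however, deliberately avoids that reduction: its proof of this statement is the $B(z)=z$ case of Theorem~\ref{theo3.6}, obtained by (i) showing $\mathcal{M}\cap H^\infty\neq\{0\}$ by multiplying an arbitrary $f\in\mathcal{M}$ by a bounded outer function manufactured from the inner--outer factorization of $f\in H^1$; (ii) showing $\mathcal{M}\cap H^\infty$ is weak*-closed via Krein--Smulian together with the coincidence of the $\al$- and $\|\cdot\|_2$-topologies on the unit ball of $L^\infty$ (Lemmas~\ref{lem3.7}--\ref{lem3.9}); (iii) classifying weak*-closed invariant subspaces of $H^\infty$ by intersecting with the classical $H^2$ Beurling subspace $\overline{\mathcal{M}\cap H^\infty}^{\|\cdot\|_2}$; and (iv) proving $\mathcal{M}\cap H^\infty$ is $\al$-dense in $\mathcal{M}$ (Lemma~\ref{lem3.11}). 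The trade-off is clear: your route is shorter but imports the dual-space description of $(L^\al)^{\#}$ and an $L^\infty$ factorization theorem hidden inside the $L^\al$ classification, and it does not transfer to invariance under a general finite Blaschke factor $B$, since no Helson--Lowdenslager theorem for $H^\infty(B)$-invariant subspaces of $L^\al$ is available; the paper's more elementary route is precisely what permits the generalization to $H^\infty(B)$ and $H^\infty_1(B)$ that is the point of the article. Your closing sketch of a from-scratch proof (greatest common inner divisor plus $\al$-cyclicity of outer functions) is in fact much closer in spirit to the paper's Lemma~\ref{lem3.11}, but as you present it, it remains a sketch rather than a proof.
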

The key components of the proof for Theorem~{\ref{Theorem1.1}} includes the description of the dual space of $L^\al$ and a factorization theorem on $L^\infty$. 
\par
In this paper, we study the invariance in a general class of Hardy spaces equipped with continuous $\|.\|_1$-dominating normalized gauge norms introduced by Chen \cite{chen2017general} in the context of the algebras $H^\infty(B)$ and $H_1^\infty(B)$. Results are also obtained for a special class of continuous rotationally symmetric norms \cite{Chen2014Thesis}, \cite{Chen2014LebesgueAH}. Furthermore, we present a general inner-outer factorization of $H^\al$ functions in terms of $n$-inner and $n$-outer functions when $\al$ is a continuous rotationally symmetric norm. This result is a generalization of the inner-outer factorization of functions in $H^2$ obtained in \cite{singh1997multiplication}.
\par
The proofs presented in the current paper do not rely on any factorization results concerning $L^\infty$ functions. In that sense, our arguments are more elementary than those in \cite{chen2017general}. We only make use of the well known inner-outer factorization of $H^2$ functions. This allows the techniques to work in the context of a more general algebra $H^\infty(B)$. We further describe the common invariant subspaces of $H^\al$. We also obtain sharp descriptions of the invariant and common invariant subspaces in the rotationally symmetric case. This generalizes the main results of \cite{chen2017general}, \cite{Chen2014LebesgueAH} in addition to the main results in \cite{Lance1997multiplication} and \cite{sahni2012lax}.
\par
The rest of the paper has been organized as follows. Section~\ref{sec2} deals with the basic definitions and preliminary results of classical Hardy spaces and general $H^\al$ spaces. In Section~\ref{sec3}, we characterize the invariant and common invariant subspaces of $H^\al$ under multiplication by finite Blaschke factors. This is equivalent to characterizing invariance under the algebras  $H^\infty(B)$  and $H_1^\infty(B)$, respectively. In Section~\ref{sec4}, we demonstrate that the descriptions of the invariant subspaces can be made sharper if $\alpha$ is taken to be a continuous rotationally symmetric norm. Here, the underlying Blaschke factor is the monomial $B(z)=z^n$. The proofs of the main results in this section rely on the decomposition of $H^\al$ in terms of the space $M_\al(z^n)$, defined as the closure of $H^\infty(z^n)$ in the $\al$ norm. Lastly, in Section~\ref{sec5}, we define the $n$-outer functions in Hardy spaces equipped with a continuous rotationally symmetric norm and obtain a factorization of $H^\al$ functions in terms of $n$-inner and $n$-outer functions. 

\section{Notation and Preliminary Results}\label{sec2}

Throughout the paper, $\D$ stands for the open unit disk in the complex plane and it's boundary, the unit circle, is denoted by $\T$. The classical Lebesgue space on the unit circle $L^p$, $1 \leq p < \infty$, is a Banach space under the norm 
\[ \|f\|_p = \left(\int _ \T \lvert f \rvert ^p dm\right)^\frac{1}{p}, \] 
where $dm$ is the normalized Lebesgue measure on $\T$. The space $L^\infty$ is a Banach space under the essential supremum norm  $$\|f\|_\infty = inf \{ \; M \; : \; m\{ e^{i\theta} : \lvert f(e^{i\theta})\rvert > M \} = 0 \}.$$
It is well known that any function $f \in L^p$ can be represented as a Fourier series $f(z) = $ $\sum\limits_{n= - \infty}^{\infty} a_n z^n$. The Fourier coefficients are given by $a_n=\int_{\T} f z^{-n} dm$. The Hardy space $H^p$ is defined as the norm-closure (weak*-closure if $p = \infty$) of the analytic polynomials in $L^p$. $H^p$ turns out to be a closed subspace of $L^p$ which can be expressed as: $$ H^p = \{ f \in L^p \; : \; a_n = \int_{\T} f z^{-n} dm= 0,\;  \text{for all} \; n < 0 \}.$$ 
The space $H^\infty = H^1\cap L^\infty$ is a Banach space under the essential supremum norm and it multiplies all $H^p$ spaces back into $H^p$. 
\par
A norm $\al$ on $L^\infty$ is called a $\|.\|_1$-dominating normalized gauge norm if:\\
(1) $\al(1) = 1$, \\ 
(2) $\al(\lvert f \rvert) = \al(f)$  for every $f \in L^\infty$,\\
(3) $\al(f) \geq \|f\|_1 $ for every $f \in L^\infty$.
\smallskip
\par
\noindent
Further, we say that a norm $\al$ is continuous if 
    $$\lim\limits_{ m(E) \rightarrow 0^+} \al( \chi_E) = 0,$$
where $\chi_E$ stands for a characteristic function of $E \subset \T$.\\
Furthermore, we can also define $\al$ norm over all the measurable functions $f$ on $\T$ as
    $$\al(f)  = \sup \{ \al(s) : s~\text{is a simple function},~0 \leq s \leq \lvert f \rvert  \}.$$
The space $\mathcal{L}^\al = \{ f : \T \rightarrow \C $ measurable such that $\al(f) < \infty $\} is a Banach space under the continuous $\|.\|_1$-dominating normalized gauge norm  $\al$. The $\al$-closure of $L^\infty$ is the general Lebesgue space $L^\al$. Some important facts about $L^\al$ and $\mathcal{L}^\al$ spaces established in \cite{chen2017general} are $L^\infty \subsetneq \mathcal{L}^\al$, and $L^\infty \subsetneq L^\al \subset \mathcal{L}^\al \subset L^1 $. The space $L^\infty$ multiplies $L^\al$ back into $L^\al$ and the following inequality is satisfied: $\al(fg) \leq \|f\|_\infty \al(g)$ for all $f \in L^\infty$ and $g \in L^\al$. In fact this inequality holds for all $g\in \mathcal{L}^{\al}$.

The $\al$-closure of $H^\infty$ is denoted by $H^\al$ and it is a closed subspace of $L^\al$. The general Hardy space $H^\al$ is a Banach space under the norm $\al$. We also have a simpler description of $H^\al$ proved in \cite{chen2017general}, which is $H^\al = H^1 \cap L^\al$. Further, we have an important relationship that $H^\infty \subsetneq H^\al \subset H^1$.

A special class of norms on $L^\infty$, called rotationally symmetric norms, are also $\|.\|_1$-dominating normalized gauge norms. We say that a norm $\al$ on $L^\infty$ is rotationally symmetric if
\par
\noindent
(1) $\al(1) = 1$, \\
(2) $\al(\lvert f \rvert) = \al(f)$ for every $f \in L^\infty$,\\
(3) $\al(f_w) = \al(f)$. Here $f_w(z) = f(\overline{w}z)$ for each $w \in \T$ and $f \in L^\infty$.
\smallskip
\par
\noindent
It has been proved in \cite{Chen2014LebesgueAH} that a rotationally symmetric norm $\al$ must satisfy $\|.\|_1 \leq \al(.) \leq \|.\|_\infty$. Therefore, these norms are also $\|.\|_1$-dominating normalized gauge norms and are a special case of the same. Besides the $p$-norms $( 1 \leq p < \infty)$, other important examples of $\|.\|_1$-dominating normalized gauge norms are presented in \cite{chen2016vector-valued}. 
\par
A Blaschke factor with $n$ zeros $\al_1, \al_2,\ldots,\al_n \in \D $ is defined as 
    $$B(z) = \prod\limits_{i=1}^{n} \dfrac{z - \al_i}{1 - \overline{\al_i} z}.$$
Without loss of generality, we assume that $\al_1 = 0$ (due to the conformal invariance of $H^2$). The operator of multiplication by $B(z)$ denoted by $T_B$ is an isometry on $H^\al$. This can be easily seen, since $\lvert B(z) \rvert = 1$ a.e on $\T$ and $\al(Bf)$ = $\al(\lvert Bf \rvert)$ = $\al(\lvert B \rvert \lvert f \rvert)$ = $\al(\lvert f \rvert)$ = $\al(f)$. A closed subspace $\mathcal{M}$ of $H^\al$ is invariant under the operator $T_B$ if $B \mathcal{M} \subset \mathcal{M}$. Moreover, $\mathcal{M}$  is invariant under any subalgebra $A$ of $H^\infty$, if $ a f \in \mathcal{M}$ whenever $a \in A$ and $f \in \mathcal{M}$. The set $H^\infty(B)$ stands for the set $ \{ foB : f \in H^\infty\}$ which turns out to be a subalgebra of $H^\infty$ generated by $B$. We shall show in Lemma~{\ref{lem2.5}} that a closed subspace $\mathcal{M}$ of $H^\al$ is invariant under the operator $T_B$ if and only if $\mathcal{M}$ is invariant under the subalgebra $H^\infty(B)$. The weak*-closure of the linear span of $\{ 1, B^2, B^3,\ldots\}$ is a subalgebra of $H^\infty$ and is denoted by $H^\infty_1(B)$. It can be shown that $\mathcal{M}$ is invariant under $H^\infty_1(B)$ if and only if $\mathcal{M}$ is invariant under both $T_{B^2}$ and $T_{B^3}$.
\par
Put $B_j(z)$ =  $\prod\limits_{i=1}^{j} \dfrac{z - \alpha_i}{1 - \overline{\alpha_i} z}$. In \cite{singh1997multiplication}, it has been proved that the collection $\left \{ e_{jm} = \dfrac{\sqrt{1-{\lvert \al_{j+1} \rvert}^2}}{1 - \overline{\al_{j+1}} z} B_j B^m : 0 \leq j \leq n-1, m = 0,1,2, \ldots  \right \}$ is an orthonormal basis for $H^2$, and that $H^2$ can be decomposed as an orthogonal direct sum 
\begin{equation}\label{eq2.1}
H^2 = e_{00} H^2(B) \oplus e_{10} H^2(B) \oplus \cdots \oplus e_{n-1,0}  H^2(B)
\end{equation}
where $H^2(B)$ is the $\|.\|_2$-closure of all the analytic polynomials in $B$. A similar decomposition has also been proved for $H^p$ spaces \cite{sahni2012lax}. Formally, for $1 \leq p \leq \infty$, $H^p$ is the algebraic direct sum $e_{00} H^p(B) \oplus e_{10} H^p(B) \oplus\cdots\oplus e_{n-1,0} H^p(B)$. Here $H^p(B)$ is the $\|.\|_p$-closure of all the analytic polynomials in $B$.
\par
The invariance under multiplication by $B(z)$ is characterized in terms of $B$-inner functions which are more general than inner functions. Every inner function is a $B$-inner function for $B(z)=z$. An $H^\infty$ function $\phi$ is called $B$-inner if $\{ B^m \varphi : m = 0,1,2, \ldots \}$ is an orthonormal set in $H^2$. In general, for an $r$ tuple $(\varphi_1 ,\ldots , \varphi_r )$ of $H^\infty$ functions, we can speak of a $B$-inner matrix $A$ which is constructed as follows:\\
For each $1 \leq j \leq r$, we can write $\varphi_j$ as
$\varphi_j = \sum\limits_{i=0}^{n-1} e_{i0} \; \varphi_{ij}$ where $\varphi_{ij} \in H^2(B)$. The $n \times r$ matrix given by $A=(\varphi_{ij})_{n \times r}$ is a $B$-inner matrix if $A^* A = I$. In \cite{singh1997multiplication}, the condition $A^* A = I$ has been shown to be equivalent to the fact that the set $\{B^m \varphi_i : 1 \leq i \leq r ,  m = 0,1,2, \ldots \}$ is orthonormal in $H^2$. This is in line with the characterization of classical inner functions and usually provides a convenient way of testing the $B$-inner condition. When $B(z)= z^n$, we shall call $B$-inner functions simply as $n$-inner functions.
\par
Coming back to the basics of $L^\al$, the dual norm $\al^\prime$ of $\al$ on $L^\infty$ is defined as:
\begin{center}
    $\al^{\prime}(h) = \sup \Big\{ \Big \lvert \int\limits_{\T} f h dm  \Big \rvert$ : $f \in L^\infty$ and $ \al(f) \leq 1 \Big\}$  \\ \quad \quad \; 
 $= \sup \Big\{ \int\limits_{\T} \lvert f h \rvert dm $ : $f \in L^\infty$ and $ \al(f) \leq 1 \Big\}$.
\end{center}
It has been shown that if $\alpha$ is a $\|.\|_1$-dominating normalized gauge norm, or a rotationally symmetric norm, then ${\al}^{\prime}$ is also a $\|.\|_1$-dominating normalized gauge norm\cite{chen2017general}, or a rotationally symmetric norm \cite{Chen2014LebesgueAH}, respectively. We can now define the space $\mathcal{L}^{\al^{\prime}}$ along the same lines as $\mathcal{L}^{\al}$. The dual space of $L^\alpha$ can be identified with $\mathcal{L}^{\al^{\prime}}$. The precise formulation of an isometric isomorphism is given in the proposition below:
%PROPOSITION1
\begin{prop}[Chen  ~\cite{chen2017general}]\label{prop2.1}
Let $\al$ be a continuous $\|.\|_1$-dominating normalized gauge norm (or, a continuous rotationally symmetric norm) and let $\al^\prime$ be the dual norm of $\al$. Then $(L^{\al})^{\#}$ = $\mathcal{L} ^{\al^{\prime}}$,i.e., for every $ \phi \in (L^\al)^{\#}$, there is an $h \in \mathcal{L}^{\al^{\prime}}$ such that $\| \phi \| = \al^{\prime}(h)$ and 
$$ \phi(f)= \int_{\T} f h dm $$ 
for all $f \in L^\al$. 
\end{prop}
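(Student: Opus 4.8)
The plan is to establish the two containments together with the isometry in three stages: a H\"older-type estimate showing that every $h \in \mathcal{L}^{\al^\prime}$ induces a bounded functional $\phi_h$ with $\|\phi_h\| \le \al^\prime(h)$; a Radon--Nikodym argument producing, for a given $\phi \in (L^\al)^{\#}$, a candidate density $h \in L^1$; and a uniform estimate showing that this density in fact lies in $\mathcal{L}^{\al^\prime}$ with $\al^\prime(h) \le \|\phi\|$, after which a routine density argument closes the proof.

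First I would record the generalized H\"older inequality $\bigl|\int_\T f h\, dm\bigr| \le \al(f)\,\al^\prime(h)$ for $f \in L^\al$ and $h \in \mathcal{L}^{\al^\prime}$. For $f,h \in L^\infty$ it is immediate from the definition of $\al^\prime$ and homogeneity; one passes to simple functions $0 \le s_k \uparrow |h|$ by monotone convergence, and then to $f \in L^\al$ by approximating $f$ in $\al$-norm by $L^\infty$ functions $f_k$ and using $\int_\T |f_k h - f h|\, dm \le \al(f_k - f)\,\al^\prime(h) \to 0$. Hence $\phi_h(f) := \int_\T f h\, dm$ defines an element of $(L^\al)^{\#}$ with $\|\phi_h\| \le \al^\prime(h)$, and $h \mapsto \phi_h$ is injective, since $\phi_h(\chi_E) = \int_E h\, dm$ for every measurable $E$ forces $h = 0$ a.e.\ once $\phi_h = 0$.

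Next, fix $\phi \in (L^\al)^{\#}$ and put $\mu(E) = \phi(\chi_E)$ for measurable $E \subset \T$. Since $m(E) = 0$ makes $\chi_E$ the zero element of $L^\al$, absolute continuity $\mu \ll m$ is automatic; the substantive point is countable additivity of $\mu$, which is exactly where continuity of $\al$ enters: if $E = \bigsqcup_k E_k$ and $F_n = \bigcup_{k \le n} E_k$, then $\al(\chi_E - \chi_{F_n}) = \al(\chi_{E \setminus F_n}) \to 0$ because $m(E \setminus F_n) \to 0$, so $\sum_{k \le n} \mu(E_k) = \phi(\chi_{F_n}) \to \phi(\chi_E) = \mu(E)$. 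Since $|\mu(E)| \le \|\phi\|\,\al(\chi_E) \le \|\phi\|$, the complex measure $\mu$ has finite total variation, and the Radon--Nikodym theorem yields $h \in L^1$ with $\phi(\chi_E) = \int_E h\, dm$; by linearity $\phi(s) = \int_\T s h\, dm$ for every simple function $s$.

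Finally, I would prove $\al^\prime(h) \le \|\phi\|$. Given $f \in L^\infty$ with $\al(f) \le 1$, choose a unimodular measurable $w$ with $w f h = |f h|$ and pick simple functions $s_k \to w f$ uniformly with $\|s_k\|_\infty \le \|f\|_\infty$; then $\al(s_k) \to \al(w f) = \al(|f|) = \al(f) \le 1$ (using $\al(g) \le \|g\|_\infty$ on $L^\infty$, which follows from the multiplier inequality $\al(gg^\prime) \le \|g\|_\infty \al(g^\prime)$ with $g^\prime$ the constant function $1$), while $\int_\T |f h|\, dm = \lim_k \int_\T s_k h\, dm = \lim_k \phi(s_k)$ and $|\phi(s_k)| \le \|\phi\|\,\al(s_k)$, so $\int_\T |f h|\, dm \le \|\phi\|$. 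Taking the supremum over such $f$ --- and then over simple functions dominated by $|h|$, as in the definition of $\al^\prime$ on measurable functions --- gives $\al^\prime(h) \le \|\phi\| < \infty$, so $h \in \mathcal{L}^{\al^\prime}$. Because simple functions are $\al$-dense in $L^\infty$, hence in $L^\al$, the bounded functionals $\phi$ and $\phi_h$ agree on a dense set and therefore coincide on $L^\al$; combined with the first stage this gives $\al^\prime(h) \le \|\phi\| = \|\phi_h\| \le \al^\prime(h)$, so $\|\phi\| = \al^\prime(h)$ and $h \mapsto \phi_h$ is the asserted isometric isomorphism. I expect the main obstacle to be this last stage: upgrading the identity that $\phi$ equals integration against $h$ from simple functions to a genuine norm bound on $h$ requires the uniform-approximation and bounded-convergence maneuver above, and it works only because $\al$ is continuous and tame under such approximations --- the same feature that made $\mu$ countably additive in the previous stage.
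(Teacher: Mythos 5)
The paper does not prove Proposition~\ref{prop2.1} at all --- it is imported as a black box from Chen's work --- so there is no in-paper argument to compare against; judged on its own, your proof is correct and follows the classical template for identifying the dual of a function space. Your first stage is exactly the paper's Lemma~\ref{lem2.4}; the second stage correctly isolates where the continuity of $\al$ is consumed (countable additivity of $E\mapsto\phi(\chi_E)$, via $\al(\chi_{E\setminus F_n})\to 0$); and the third stage's uniform-approximation maneuver legitimately upgrades the simple-function identity to the bound $\al^{\prime}(h)\le\|\phi\|$, including the necessary pass through simple functions dominated by $|h|$ to match the definition of $\al^{\prime}$ on unbounded measurable functions. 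Two minor points worth tightening: finiteness of the total variation of $\mu$ follows from its countable additivity (or from the bound $|\mu(E)|\le\|\phi\|$ applied to the real and imaginary parts separately), not directly from the single displayed inequality; and your stage-one monotone-convergence step applies Proposition~\ref{prop2.2} to the dual norm $\al^{\prime}$, whose continuity is not asserted in the paper --- but this is the same unremarked reliance already present in the paper's proof of Lemma~\ref{lem2.4}, so it is not a defect specific to your argument.
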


\begin{prop}[Chen~\cite{chen2017general}]\label{prop2.2}
Let $\al$ be a continuous $\|.\|_1$-dominating normalized gauge norm on $\mathcal{L}^\al$ and if $0 \leq f_1\leq f_2 \leq \ldots $ and $f_n \rightarrow f \; a.e.$. Then, $ \al(f_n) \rightarrow \al(f)$.
\end{prop}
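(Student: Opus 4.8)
The plan is to reduce the statement to a monotone-convergence assertion for simple functions and then exploit the continuity of $\al$. First I would record the elementary fact that $\al$ is monotone: if $0\le g\le h$ with $\al(h)<\infty$, then writing $\varphi=g/h$ on $\{h>0\}$ and $\varphi=0$ elsewhere gives $\varphi\in L^\infty$ with $\|\varphi\|_\infty\le1$ and $g=\varphi h$, so the multiplication inequality recorded in Section~\ref{sec2} yields $\al(g)=\al(\varphi h)\le\|\varphi\|_\infty\,\al(h)\le\al(h)$ (for merely measurable $h$ one applies this to $h\wedge N$ and invokes the supremum defining $\al$). Consequently $(\al(f_n))_n$ is nondecreasing and bounded above by $\al(f)$, hence convergent with $\lim_n\al(f_n)\le\al(f)$; the whole content of the proposition is the reverse inequality. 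Since $f=|f|$, the extended definition of $\al$ gives $\al(f)=\sup\{\al(s):s\text{ simple},\ 0\le s\le f\}$, so it suffices to fix a nonnegative simple $s\le f$ and prove $\al(s)\le\lim_n\al(f_n)$; this will also cover the case $\al(f)=\infty$.

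Fix $\varepsilon\in(0,1)$ and put $E_n=\{x\in\T:f_n(x)\ge(1-\varepsilon)s(x)\}$. Because $f_1\le f_2\le\cdots$ the sets $E_n$ increase, and because $f_n\to f\ge s>(1-\varepsilon)s$ a.e.\ on $\{s>0\}$ while $f_n\ge0=(1-\varepsilon)s$ on $\{s=0\}$, the sets $E_n$ exhaust $\T$ up to a null set, so $m(\T\setminus E_n)\to0$. From $(1-\varepsilon)\,s\chi_{E_n}\le f_n$ and monotonicity I obtain
\[
(1-\varepsilon)\,\al(s\chi_{E_n})\le\al(f_n)\quad\text{for all }n.
\]

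Next I would show $\al(s\chi_{E_n})\to\al(s)$. Since $s-s\chi_{E_n}=s\chi_{\T\setminus E_n}\le\|s\|_\infty\,\chi_{\T\setminus E_n}$, monotonicity together with the multiplication inequality gives $\al(s-s\chi_{E_n})\le\|s\|_\infty\,\al(\chi_{\T\setminus E_n})$, and the right-hand side tends to $0$ by the continuity of $\al$ and $m(\T\setminus E_n)\to0$; the triangle inequality then yields $\al(s\chi_{E_n})\to\al(s)$. Letting $n\to\infty$ in the displayed inequality gives $(1-\varepsilon)\al(s)\le\lim_n\al(f_n)$; letting $\varepsilon\to0^+$ and taking the supremum over all admissible $s$ gives $\al(f)\le\lim_n\al(f_n)$, which would complete the proof.

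The step I expect to be the crux --- and the only place a hypothesis is used essentially --- is the convergence $\al(s\chi_{E_n})\to\al(s)$, which rests entirely on the continuity of $\al$; for a non-continuous gauge norm such as $\|\cdot\|_\infty$ the conclusion is false. A secondary subtlety is the $(1-\varepsilon)$ truncation, which is needed because the $f_n$ may approach $f$ from strictly below, so that $\{f_n\ge s\}$ need not fill up $\T$; inserting the factor $1-\varepsilon$ repairs this at the cost of a harmless limit at the end. One could instead deduce the proposition from the formula $\al(f)=\sup\{\int_\T|fh|\,dm:h\in L^\infty,\ \al'(h)\le1\}$ --- valid by Hahn--Banach and Proposition~\ref{prop2.1} --- combined with the classical monotone convergence theorem applied for each fixed $h$, but that route requires extra care in passing between $L^\al$ and $\mathcal{L}^{\al}$.
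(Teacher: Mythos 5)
Your argument is correct, but note that the paper does not prove this proposition at all: it is imported verbatim from Chen's work \cite{chen2017general} as a black box, so there is no in-paper proof to compare against. What you have written is a sound self-contained derivation from the axioms of a continuous $\|.\|_1$-dominating normalized gauge norm. The structure is the standard one: monotonicity of the extended $\al$ (which, for the instances you actually need, follows directly from the supremum-over-simple-functions definition, so the detour through $\varphi=g/h$ and the multiplication inequality is only needed at the $L^\infty$ level, if at all) gives $\lim_n\al(f_n)\le\al(f)$; the reverse inequality is reduced to simple minorants $s\le f$ via the sets $E_n=\{f_n\ge(1-\varepsilon)s\}$, and the key estimate $\al(s\chi_{\T\setminus E_n})\le\|s\|_\infty\,\al(\chi_{\T\setminus E_n})\to0$ is exactly where the continuity hypothesis enters, as you correctly emphasize. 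The $(1-\varepsilon)$ truncation is indeed necessary and handled properly, and your treatment covers the case $\al(f)=\infty$. The only cosmetic quibble is the typo $\lvert g_k f_k^{(i)} u\rvert$-style slip in your monotonicity paragraph, namely writing $\lvert g_k f_k^{(i)} u - f^{(i)}u + f^{(i)}u\rvert$-type identities is not present here, but you do write ``$\al(g)=\al(\varphi h)$'' where equality of the functions $g=\varphi h$ a.e.\ should be noted (it holds since $g=0$ wherever $h=0$); this is harmless. I see no gap.
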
 

\begin{rem}
In particular taking $\al = \|.\|_1$, we get the classical monotone convergence theorem. 
\end{rem}
The lemma below states a version of Holder's inequality valid on $L^\alpha$. Its proof relies on the generalized monotone convergence theorem stated above (Proposition \ref{prop2.2}).

\begin{lem}\label{lem2.4}
If $f\in L^\al$ and $h\in\mathcal{L}^{\al^{\prime}}$, then $fh\in L^1$. Further, $\|fh\|_1 \leq \al(f) \al^{\prime}(h)$.  
\end{lem}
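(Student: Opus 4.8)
The plan is to reduce to the case of non-negative functions via $|fh| = |f|\,|h|$ and the gauge property, then approximate $|f|$ and $|h|$ from below by simple functions to invoke the definition of the dual norm $\al'$ together with the generalized monotone convergence theorem (Proposition \ref{prop2.2}). First I would observe that since $\al(|f|)=\al(f)$ and $\al'(|h|)=\al'(h)$, and since $\|fh\|_1 = \||f|\,|h|\|_1$, we may assume throughout that $f \geq 0$ and $h \geq 0$. Next, recall that $\al'$ on $L^\infty$ is defined by $\al'(k) = \sup\{\int_\T |gk|\,dm : g \in L^\infty,\ \al(g)\le 1\}$; from this one immediately extracts the elementary inequality $\int_\T |gk|\,dm \le \al(g)\,\al'(k)$ for $g \in L^\al$ and $k \in L^\infty$, after first noting it holds for $g \in L^\infty$ by homogeneity and then extending to $g \in L^\al$ by taking an $L^\infty$-approximating sequence $g_n \to g$ in $\al$-norm (here one uses that $\al$ dominates $\|.\|_1$, so $\al$-convergence gives $L^1$-convergence, hence a subsequence converging a.e., and Fatou on $\int |g_n k|$).

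With that preliminary inequality in hand for bounded $k$, the main step is to handle $h \in \mathcal{L}^{\al'}$, which need not be bounded. I would set $h_N = \min(h, N) \in L^\infty$, so $0 \le h_1 \le h_2 \le \cdots$ and $h_N \to h$ a.e. By the definition of $\al'$ extended to measurable functions (the supremum over simple functions $s$ with $0 \le s \le h$), one has $\al'(h_N) \le \al'(h)$ for every $N$. Similarly, writing $f$ as an increasing a.e. limit of simple functions $s_k$ with $0 \le s_k \le f$ and $\al(s_k) \to \al(f)$ (possible by the very definition of $\al$ on measurable functions, or by Proposition \ref{prop2.2}), I would apply the bounded-$k$ inequality to the bounded functions $s_k$ and $h_N$:
\[
\int_\T s_k\, h_N\, dm \;\le\; \al(s_k)\,\al'(h_N) \;\le\; \al(f)\,\al'(h).
\]
Now the product $s_k h_N$ is a non-negative increasing sequence (in both indices) converging a.e.\ to $f h$; applying the classical monotone convergence theorem (the $\al = \|.\|_1$ case of Proposition \ref{prop2.2}, as noted in the Remark) to let $N \to \infty$ and then $k \to \infty$ yields $\int_\T f h\, dm \le \al(f)\,\al'(h) < \infty$. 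This simultaneously shows $fh \in L^1$ and gives the claimed norm bound $\|fh\|_1 \le \al(f)\,\al'(h)$.

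The main obstacle I anticipate is the careful bookkeeping around the two-sided approximation: $f$ must be approached from below by simple functions (to use the definition of $\al(f)$ for measurable $f$ and ensure $\al(s_k)\le \al(f)$), while $h$ must be truncated (to land in $L^\infty$ where the dual-norm inequality is immediate), and one must confirm that $\al'(h_N) \le \al'(h)$ holds for the truncations — this follows because every simple function below $h_N$ is a simple function below $h$, so the defining supremum for $\al'(h_N)$ is over a subset of that for $\al'(h)$. Everything else is routine monotone convergence; the only subtlety is making sure each intermediate quantity is finite or that we are working with non-negative integrands throughout so that monotone convergence applies without integrability hypotheses.
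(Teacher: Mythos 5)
Your proposal is correct and follows essentially the same route as the paper's proof: reduce to non-negative functions, establish the inequality for bounded functions directly from the definition of $\al^{\prime}$, approximate $|f|$ and $|h|$ from below, and pass to the limit using the classical and generalized monotone convergence theorems (Proposition~\ref{prop2.2}). The only cosmetic differences are that you truncate $h$ by $\min(h,N)$ and invoke monotonicity of $\al^{\prime}$ where the paper uses increasing simple-function approximations of both factors together with the convergence $\al(s_n)\to\al(|f|)$, $\al^{\prime}(r_n)\to\al^{\prime}(|h|)$; both versions close the argument the same way.
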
 

\begin{proof}
The proof is easy when $f$ and $h$ both belong to $L^\infty$. This can be seen through the following calculation:

\begin{equation}\label{eq2.2}
\left \| \frac{f}{\al(f)} \; h \right\|_1 = \frac{1}{\al(f)} \int_{\T}  \lvert f h \rvert dm \leq \al^{\prime}(h).
\end{equation}
The inequality in ({\ref{eq2.2}}) follows by the definition of $\al^\prime$.
It can also be re-phrased as $ \| f h \|_1 \leq \al(f) \; \al^{\prime}(h)$.

We now show the validity of inequality (\ref{eq2.2}) for any $ f \in L^\al$ and $h \in \mathcal{L}^{\al^{\prime}}$. Since $\lvert f \rvert$ and $\lvert h \rvert$ are measurable functions on $\T$, there exist increasing sequences of non-negative simple functions ${s_n}$ and ${r_n}$ respectively, such that $s_n \to \lvert f \rvert \; a.e.$ and $r_n \to \lvert h \rvert \; a.e.$ By Proposition~{\ref{prop2.2}}, we have $\al(s_n) \to \al(\lvert f \rvert)$ and $\al^{\prime}(r_n) \to \al^{\prime}(\lvert h \rvert)$ as $n \to \infty$. Thus, 
$$\al(s_n) \; \al^{\prime}(r_n) \to \al(\lvert f \rvert) \; \al^{\prime}(\lvert h \rvert) \; \; \; \;  \text{as} \; \; n \to \infty.$$
Further, note that the sequence $s_n r_n \to \lvert fh \rvert \; a.e.$ So, by the classical monotone convergence theorem, we have 
    $$\|s_n r_n\|_1 \to \|fh\|_1 \; \; \;  as \; \; n \to \infty.$$
Applying the inequality (\ref{eq2.2}), we get 
\[  \|s_n r_n\|_1 \;  \leq \; \al(s_n) \; \al^{\prime}(r_n).\]
Finally, taking limit as $n\to\infty$, we conclude that for every $f \in L^\al$ and $h \in \mathcal{L^{\al^{\prime}}}$:
\[ \|fh\|_1 \; \leq \al(\lvert f \rvert) \; \al^{\prime}(\lvert h \rvert) = \al(f) \; \al^{\prime}(h). \]
\end{proof}

\begin{lem}\label{lem2.5}
Let $\al$ be a continuous $\|.\|_1$-dominating gauge norm. Suppose $\mathcal{M}$ is a closed subspace of $H^\al$. Then, $\mathcal{M}$ is invariant under $T_B$ if and only if $\mathcal{M}$ is invariant under the algebra $H^\infty(B)$. 
\end{lem}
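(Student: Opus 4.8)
The plan is to prove the non-trivial implication, namely that $B\mathcal{M}\subset\mathcal{M}$ forces $(f\circ B)g\in\mathcal{M}$ for every $f\in H^\infty$ and every $g\in\mathcal{M}$. The reverse implication is immediate, since $B$ itself lies in $H^\infty(B)$ (take $f(w)=w$), so invariance under $H^\infty(B)$ trivially entails invariance under $T_B$. For the forward direction I would first record the elementary observation that $T_B$-invariance of the subspace $\mathcal{M}$ gives $B^kg\in\mathcal{M}$ for all $k\ge 0$, and hence $p(B)g\in\mathcal{M}$ for every analytic polynomial $p$ and every $g\in\mathcal{M}$.

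Next I would argue by contradiction, using duality. Suppose $f\in H^\infty$ and $g\in\mathcal{M}$ satisfy $\varphi:=(f\circ B)g\notin\mathcal{M}$; note $\varphi\in H^\al$, since $\al(\varphi)\le\|f\circ B\|_\infty\,\al(g)<\infty$ gives $\varphi\in L^\al$ and $\varphi\in H^1$, so $\varphi\in H^1\cap L^\al=H^\al$. As $\mathcal{M}$ is closed, the Hahn--Banach theorem produces a bounded linear functional on $H^\al$ that annihilates $\mathcal{M}$ but is nonzero at $\varphi$. Extending it to $L^\al$ and invoking Proposition~\ref{prop2.1}, I obtain $h\in\mathcal{L}^{\al'}$ with $\int_{\T}\psi h\,dm=0$ for all $\psi\in\mathcal{M}$ and $\int_{\T}\varphi h\,dm\ne 0$. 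Put $u:=gh$; by Lemma~\ref{lem2.4}, $u\in L^1$, and taking $\psi=B^kg\in\mathcal{M}$ in the orthogonality relation yields $\int_{\T}B^k u\,dm=0$ for every $k\ge 0$.

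To reach the contradiction I would approximate $f$ by its Fej\'er means $\sigma_N f$: each $\sigma_N f$ is an analytic polynomial, $\|\sigma_N f\|_\infty\le\|f\|_\infty$, and $\sigma_N f\to f$ almost everywhere on $\T$. Because $B$ is a finite Blaschke factor with $B(0)=0$, the pushforward $m\circ B^{-1}$ equals $m$; in particular preimages under $B$ of Lebesgue-null sets are Lebesgue-null, so $\sigma_N f\circ B\to f\circ B$ almost everywhere on $\T$. Since $|(\sigma_N f\circ B)\,u|\le\|f\|_\infty|u|\in L^1$, the classical dominated convergence theorem gives $\int_{\T}(\sigma_N f\circ B)\,u\,dm\to\int_{\T}(f\circ B)\,u\,dm$. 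But each left-hand integral vanishes, being a polynomial relation in $B$ integrated against $u$ together with $\int_{\T}B^ku\,dm=0$. Hence $\int_{\T}\varphi h\,dm=\int_{\T}(f\circ B)\,u\,dm=0$, a contradiction; therefore $(f\circ B)g\in\mathcal{M}$, i.e.\ $\mathcal{M}$ is invariant under $H^\infty(B)$.

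The point that needs care is the interchange of limit and integral. The device above is to avoid any convergence theorem at the level of the $\al$-norm by pushing everything down to $L^1$ via Proposition~\ref{prop2.1} and the H\"older-type estimate of Lemma~\ref{lem2.4}; the cost is that one must (a) know that $B$ carries no new Lebesgue-null sets, which is the measure-preserving property of finite Blaschke factors, and (b) use Fej\'er means rather than Taylor partial sums, since the latter need not be uniformly bounded for a general $H^\infty$ function. With those two facts the argument is routine, and in keeping with the paper it uses only Hahn--Banach, the duality of Proposition~\ref{prop2.1}, Lemma~\ref{lem2.4}, and classical properties of Fej\'er means and Blaschke products — no factorization of $L^\infty$ functions.
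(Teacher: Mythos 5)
Your proof is correct and follows essentially the same route as the paper's: both approximate the symbol $f\in H^\infty$ by its Fej\'er/Ces\`aro means, pair against a functional $h\in\mathcal{L}^{\alpha'}$ using Lemma~\ref{lem2.4} to land in $L^1$, and conclude from weak closedness of $\mathcal{M}$ (your Hahn--Banach contradiction is the same step in contrapositive form). The only difference is that you explicitly justify the weak* convergence of $\sigma_N f\circ B$ to $f\circ B$ — via a.e.\ convergence, the null-set--preserving property of $B$, and dominated convergence — where the paper simply asserts it.
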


\begin{proof}
The condition $H^\infty(B) \mathcal{M} \subset \mathcal{M}$ trivially implies the invariance of $\mathcal{M}$ under the operator $T_B$. We now prove the converse.

Suppose $T_B(\mathcal{M})\subset \mathcal{M}$. Then, $(T_B)^n \mathcal{M} \subset \mathcal{M}$ for all $n = 1,2,3, \ldots$ This implies that $\mathcal{M}$ is invariant under multiplication by all polynomials in $B$. We shall show that $\mathcal{M}$ is also invariant under multiplication by all functions in $H^\infty(B)$.
Let $k \in H^\infty(B)$, and $h\in \mathcal{M} $ be arbitrary. We can find a sequence of Cesaro means $\{\sigma_n(B)\}$ converging to $k$ in the weak* topology. That is, for every $f \in L^1$:
\begin{equation}\label{eq2.3}
\int_{\T} \sigma_n(B) f~dm \longrightarrow  \int_{\T} k f~dm \; \; \; \text{as} \; \; n \rightarrow \infty.
\end{equation}
In view of Lemma~{\ref{lem2.4}} and equation (\ref{eq2.3}) we conclude that for any non-zero element $u \in \mathcal{L}^{\al^\prime}$: 
\[\int_{\T} \sigma_n(B) h u~dm \longrightarrow  \int_
{\T} k h u~dm  \; \; \; \text{as} \; \; n \rightarrow \infty.
\]
Note that $\sigma_n(B)$ are polynomials in $B$, and hence $\sigma_n(B) h \in \mathcal{M} \subset L^\al $. Looking at the general form of any bounded linear functional on $L^\alpha$ (Proposition~{\ref{prop2.1}}), it follows that $\sigma_n(B) h$ $\rightarrow$ $kh$ weakly in $L^\al$. Because all closed subspaces are also weakly closed,  we infer that $kh \in \mathcal{M}$. This completes the proof.  
\end{proof}

\begin{rem}
The invariance of $\mathcal{M}$ under $T_{B^2}$ and $T_{B^3}$ but not under $T_B$ is equivalent to the invariance of $\mathcal{M}$ under the algebra $H^\infty_1(B)$ but not under $H^\infty(B)$. 
\end{rem}

We shall now state a few lemmas without proof which play a central role in proving the main results of this paper. 

\begin{lem}[Gamelin~\cite{gamelin1969uniform}]\label{lem2.7}
Suppose $\{f_n\}$ is a sequence of $H^\infty$ functions which converges to a $H^\infty$ function $f$ in the $H^2$ norm. Then, there exists a sequence $\{h_n\}$ of uniformly bounded $H^\infty$ functions which converges to 1 $a.e.$, such that $h_n f_n \to f$ in the weak* topology.
\end{lem}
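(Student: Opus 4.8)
The plan is to manufacture each $h_n$ as an outer function whose boundary modulus is exactly large enough to tame the (possibly large) values of $f_n-f$, and then to pass to a subsequence so that the argument of $h_n$ also settles down. The whole point is that $f_n$ is controlled only in $H^2$, not in $H^\infty$, so the extra factor $h_n$ is what keeps $\{h_nf_n\}$ bounded.

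First I would reduce to the case $f_n\to f$ a.e.\ on $\T$: convergence in $H^2$ forces convergence in $L^1$, hence in measure, so some subsequence converges a.e., and I retain the notation $\{f_n\}$ for it. Put $g_n:=f_n-f\in H^\infty$ and let $h_n$ be the outer function with boundary modulus $|h_n|=(1+|g_n|)^{-1}$. Since $0\le\log(1+|g_n|)\le|g_n|\le\|g_n\|_\infty$, the function $\log|h_n|$ is bounded, so $h_n$ exists in $H^\infty$ with $\|h_n\|_\infty=\|\,|h_n|\,\|_\infty\le 1$; thus the $h_n$ are uniformly bounded. The decisive identity is $h_nf_n=h_ng_n+h_nf$, where $|h_ng_n|=|g_n|/(1+|g_n|)\le 1$ a.e., whence $\|h_nf_n\|_\infty\le 1+\|f\|_\infty$ and $\{h_nf_n\}$ is bounded in $H^\infty$.

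Next I would check $h_n\to 1$ a.e.\ and $h_nf_n\to f$ weak$^*$. Write $v_n:=\log|h_n|=-\log(1+|g_n|)\le 0$; on $\T$ the boundary values of the outer function satisfy $h_n=\exp(v_n+i\widetilde v_n)$, where $\widetilde u$ denotes the conjugate function of $u$. From $|v_n|\le|g_n|$ we get $v_n\to 0$ in $L^2$ and $v_n\to 0$ a.e.; since conjugation is bounded on $L^2$, $\widetilde v_n\to 0$ in $L^2$, hence in measure, so after a further subsequence $\widetilde v_n\to 0$ a.e. Therefore $h_n\to 1$ a.e. For the weak$^*$ statement, note $h_ng_n\to 0$ a.e.\ with $|h_ng_n|\le 1$, so $h_ng_n\to 0$ in $L^1$ by dominated convergence; similarly $(h_n-1)f\to 0$ a.e.\ with $|(h_n-1)f|\le 2\|f\|_\infty$, so $(h_n-1)f\to 0$ in $L^1$. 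A sequence bounded in $L^\infty$ that tends to $0$ in $L^1$ also tends to $0$ when integrated against any fixed element of $L^1$ (split the test function into a bounded piece and a piece of small $L^1$ norm). Hence $h_ng_n\to 0$ and $h_nf-f=(h_n-1)f\to 0$ weak$^*$, and adding them gives $h_nf_n\to f$ weak$^*$.

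The main obstacle — and the reason the subsequences appear — is that the hypothesis pins down only the \emph{modulus} of $h_n$, while recovering the boundary function of an outer function from its modulus passes through the conjugate-function operator, which is not bounded on $L^1$; so pointwise control of $h_n$ (already of $f_n-f$ itself) is available only along a subsequence. Consequently the conclusion is understood as producing $\{h_n\}$ attached to a subsequence of the given $\{f_n\}$, which suffices in the sequel, since membership in a weak$^*$-closed subspace can be read off along any subsequence of a norm-bounded sequence.
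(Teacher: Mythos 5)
The paper offers no proof of this lemma --- it is one of the results ``stated without proof'' with a citation to Gamelin --- so there is nothing internal to compare your argument against; what you give is the standard construction, and every step checks out. The outer function $h_n$ with boundary modulus $(1+|f_n-f|)^{-1}$ exists because its log-modulus is bounded ($f_n,f\in H^\infty$), the identity $|h_n(f_n-f)|=|f_n-f|/(1+|f_n-f|)\le 1$ gives the uniform bound on $h_nf_n$ that weak$^*$ convergence requires, and the passage from ``bounded in $L^\infty$ and null in $L^1$'' to ``weak$^*$-null'' is correct. In fact the a.e.\ convergence of $g_n=f_n-f$ is not even needed for the weak$^*$ part, since $\|h_ng_n\|_1\le\|g_n\|_1\to 0$ directly; your subsequences are needed only to secure $h_n\to 1$ a.e., via the conjugate function $\widetilde v_n$, which is controlled only in $L^2$.

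Your closing caveat about subsequences is not a blemish in the proof but a necessary correction to the statement: for the full sequence the conclusion ``$h_n\to 1$ a.e.'' cannot in general be achieved. Take $f=0$ and let $f_n$ be the outer function with $|f_n|=M_n$ on an arc $E_n$ and $|f_n|=1/n$ off it, where the $E_n$ are ``typewriter'' arcs with $m(E_n)\to 0$ but every point of $\T$ lying in infinitely many $E_n$, and $M_n\to\infty$ with $M_n^2\,m(E_n)\to 0$; then $f_n\to 0$ in $H^2$, yet any $\{h_n\}$ with $h_nf_n$ weak$^*$-convergent satisfies $\sup_n\|h_nf_n\|_\infty=C<\infty$ by Banach--Steinhaus, hence $|h_n|\le C/M_n$ a.e.\ on $E_n$, so $\liminf_n|h_n(x)|=0$ for a.e.\ $x$ and $h_n\not\to 1$ a.e. So the version you actually prove --- the $h_n$ exist along a subsequence of $\{f_n\}$ --- is the correct one, and it is all that is ever used here: in the proof of Theorem~3.3 the authors themselves pass to a subsequence along which $f_k^{(i)}\to f^{(i)}$ a.e., and membership of a weak$^*$ limit in a weak$^*$-closed subspace can be tested along any subsequence. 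Your proof is correct.
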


\begin{lem}[Sahni and Singh~\cite{sahni2012lax}]\label{lem2.8}
Suppose $f \in H^\infty$ such that $f$ = $\varphi_1 g_1 + \cdots + \varphi_r g_r$ for some $g_1,\ldots,g_r$ $\in H^2(B)$, and for some $\varphi_1 ,\ldots, \varphi_r$ $B$-inner functions. Then $g_1,\ldots,g_r$ belongs to  $H^\infty(B)$. 
\end{lem}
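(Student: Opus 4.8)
The plan is to carry the identity $f = \varphi_1 g_1 + \cdots + \varphi_r g_r$ over to the classical Hardy space through the isometry $k \mapsto k \circ B$ of $H^2$ onto $H^2(B)$ — an isometry because the first zero of $B$ is $0$, so that $\{B^m : m \ge 0\}$ is exactly the image of the standard orthonormal basis of $H^2$ — and then to recover each $g_j$ from $f$ by a pointwise computation on $\T$. Since a function in $H^2(B)$ is bounded on $\T$ precisely when its pre-image in $H^2$ is bounded, one has $H^2(B) \cap L^\infty = H^\infty(B)$; as $g_j$ already lies in $H^2(B)$, it therefore suffices to prove that $g_j$ is essentially bounded on $\T$.

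The first step is to expand everything against the direct sum $H^1 = e_{00}H^1(B) \oplus \cdots \oplus e_{n-1,0}H^1(B)$. Write $f = \sum_{i=0}^{n-1} e_{i0} f_i$ with $f_i \in H^\infty(B)$ (possible since $f \in H^\infty$), and $\varphi_j = \sum_{i=0}^{n-1} e_{i0}\varphi_{ij}$ with $\varphi_{ij} \in H^2(B)$, these being the entries of the $B$-inner matrix $A = (\varphi_{ij})$. Substituting the latter expansion into $f = \sum_j \varphi_j g_j$, collecting $e_{i0}$-components, and invoking uniqueness of the $H^1(B)$-decomposition yields
\[ f_i \;=\; \sum_{j=1}^{r} \varphi_{ij}\, g_j, \qquad 0 \le i \le n-1, \]
an a.e.\ identity of boundary functions on $\T$; in vector--matrix form, $F = AG$ with $F = (f_0,\dots,f_{n-1})^{T}$ and $G = (g_1,\dots,g_r)^{T}$.

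The second step uses the $B$-inner condition $A^*A = I$, which says that for a.e.\ $e^{i\theta}$ the matrix $(\varphi_{ij}(e^{i\theta}))$ has orthonormal columns, i.e.\ $\sum_{i=0}^{n-1}\overline{\varphi_{ij}}\,\varphi_{ik} = \delta_{jk}$ a.e.\ on $\T$. (If one wants this from the definition: orthonormality of $\{B^m\varphi_i : 1\le i\le r,\ m\ge 0\}$ in $H^2$, together with the orthogonality of the summands $e_{i0}H^2(B)$ in \eqref{eq2.1}, each isometric to $H^2(B)$, and $|B|=1$ a.e.\ on $\T$, shows that $\int_\T B^{p}\big(\sum_i \varphi_{ij}\overline{\varphi_{ik}}\big)\,dm$ equals $\delta_{jk}$ for $p = 0$ and vanishes for every other integer $p$; as $\sum_i \varphi_{ij}\overline{\varphi_{ik}}$ is itself a function of $B$, uniqueness of Fourier coefficients forces it to be the constant $\delta_{jk}$ a.e.) Multiplying $F = AG$ on the left by $A^*$ pointwise a.e.\ now gives $g_j = \sum_{i=0}^{n-1}\overline{\varphi_{ij}}\,f_i$ a.e.\ on $\T$, whence, by the Cauchy--Schwarz inequality applied pointwise and $\sum_i|\varphi_{ij}|^2 = 1$ a.e.,
\[ |g_j| \;\le\; \Big(\sum_{i=0}^{n-1}|\varphi_{ij}|^2\Big)^{1/2}\Big(\sum_{i=0}^{n-1}|f_i|^2\Big)^{1/2} \;=\; \Big(\sum_{i=0}^{n-1}|f_i|^2\Big)^{1/2} \;\le\; \Big(\sum_{i=0}^{n-1}\|f_i\|_\infty^2\Big)^{1/2} \;<\; \infty \]
a.e.\ on $\T$. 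Hence $g_j \in L^\infty$; together with $g_j \in H^2(B)$ and $H^2(B)\cap L^\infty = H^\infty(B)$ this gives $g_j \in H^\infty(B)$ for every $j$, which is the assertion.

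The step I expect to be the main obstacle is the bridge in the third paragraph between the Hilbert-space statement ``$A^*A = I$'' (equivalently, orthonormality of the family $\{B^m\varphi_i\}$) and the \emph{pointwise} matrix identity a.e.\ on $\T$: some care is needed because a product of two $H^2(B)$ functions lies only in $H^1(B)$, so the matching has to be effected through Fourier coefficients rather than by a Hilbert-space orthogonal projection. (Throughout I read the hypothesis ``$\varphi_1,\dots,\varphi_r$ are $B$-inner'' as the statement that $(\varphi_1,\dots,\varphi_r)$ forms a $B$-inner matrix, $A^*A = I$; this joint orthonormality is indispensable, for otherwise $g_1,\dots,g_r$ need not be determined by $f$ at all.)
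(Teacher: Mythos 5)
The paper states Lemma~\ref{lem2.8} without proof (it is quoted from \cite{sahni2012lax}), so there is no internal argument to compare yours against; I assess your proposal on its own terms. Most of your mechanism is sound: reading the hypothesis as the joint condition $A^*A=I$ is indeed the intended and necessary one (with $\varphi_1=\varphi_2=1$ and $g_1=-g_2$ unbounded one gets an immediate counterexample to the ``individually $B$-inner'' reading); your derivation of the pointwise identity $\sum_{i}\overline{\varphi_{ij}}\,\varphi_{ik}=\delta_{jk}$ a.e.\ from orthonormality of $\{B^m\varphi_i\}$ is correct (the passage from the vanishing of all moments $\int_\T B^p\bigl(\sum_i\varphi_{ij}\overline{\varphi_{ik}}\bigr)\,dm$ to the pointwise conclusion uses that $B$ with $B(0)=0$ pushes $m$ forward to $m$, which you rely on implicitly and which is true); and the recovery $G=A^*F$ followed by pointwise Cauchy--Schwarz is exactly the right way to bound $|g_j|$.

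The gap is at the very first step. You write $f=\sum_i e_{i0}f_i$ with $f_i\in H^\infty(B)$, ``possible since $f\in H^\infty$'', i.e.\ you invoke the decomposition \eqref{eq3.1}. But \eqref{eq3.1} is precisely the special case $\varphi_j=e_{j0}$ of Lemma~\ref{lem2.8}, and in this paper it is \emph{derived from} Lemma~\ref{lem2.8} at the start of Section~\ref{sec3}. So your argument, as written, reduces the general lemma to that special case rather than proving it: everything hinges on knowing that the $e_{i0}$-components of a bounded function are themselves essentially bounded, and this is not automatic --- the map $f\mapsto e_{i0}f_i$ is an orthogonal projection in $H^2$, and orthogonal projections do not in general preserve $L^\infty$. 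It does hold here, but only via an independent argument such as the explicit branch-sum formulas for the components (as in Lance--Stessin) or, equivalently, Lemma~\ref{lem2.9} with $p=\infty$. Note that Lemma~\ref{lem2.9} with $p=\infty$ applied directly to $f=\sum_j\varphi_j g_j$ already yields the whole lemma in one line ($\|f_j\|_\infty\le D_{j,\infty}\|f\|_\infty$ where $g_j=f_j\circ B$), so the genuine content of your proposal is the reduction of an arbitrary $B$-inner tuple to the canonical tuple $(e_{00},\dots,e_{n-1,0})$ --- a correct and useful reduction, but one that still needs an independent proof of boundedness of those canonical components before it is complete.
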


\begin{lem}[Lance and Stessin~\cite{Lance1997multiplication}]\label{lem2.9}
Let $\varphi_1,\ldots,\varphi_r$ ($r \leq n$) are $B$-inner functions and for any $f\in H^\infty$ such that $f(z) = \sum_{j=1}^{r} \varphi_j(z) f_j(B(z))$. Then there exist constants $D_{j,p}$, such that $\|f_j\|_p \leq D_{j,p} \|f\|_p$ for each $j=1,\ldots,r$ and $1 \leq p \leq \infty$.
\end{lem}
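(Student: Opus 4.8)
\medskip
\noindent\textbf{Proof plan.} The idea is to move the problem, via the orthogonal decomposition $H^p=e_{00}H^p(B)\oplus\cdots\oplus e_{n-1,0}H^p(B)$, to a pointwise matrix identity on $\T$ controlled by the $B$-inner hypothesis. Expand each $B$-inner function $\varphi_j$ in this decomposition, $\varphi_j=\sum_{i=0}^{n-1}e_{i0}\,\varphi_{ij}$ with $\varphi_{ij}\in H^2(B)$, and put $A=(\varphi_{ij})_{n\times r}$, so that $A^{*}A=I$ (this forces $r\le n$). Substituting into $f=\sum_{j=1}^{r}\varphi_j\,(f_j\circ B)$ and collecting the $e_{i0}$-terms gives $f=\sum_{i=0}^{n-1}e_{i0}\,g_i$ with $g_i:=\sum_{j=1}^{r}\varphi_{ij}\,(f_j\circ B)$. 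Since $f\in H^\infty\subset H^p$ and the displayed decomposition of $H^p$ (respectively of $H^1$) is direct, the $g_i$ must coincide with the unique $i$-th components of $f$; in particular each $g_i\in H^p(B)$.

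The next point is that $g_i$ is recoverable from $f$ with $H^p$ control. Since $H^p$ is the direct sum of the \emph{closed} subspaces $e_{i0}H^p(B)$, the closed graph theorem makes the coordinate projections $f\mapsto e_{i0}g_i$ bounded on $H^p$; the spaces involved are Banach for every $1\le p\le\infty$, so $p=1$ and $p=\infty$ are not special. Moreover, on $\T$ one has $|e_{i0}|=\sqrt{1-|\alpha_{i+1}|^2}\,\big/\,|1-\overline{\alpha_{i+1}}z|\ge c_i>0$, so $1/e_{i0}\in L^\infty(\T)$, and therefore $\|g_i\|_p=\|(1/e_{i0})\,e_{i0}g_i\|_p\le\|1/e_{i0}\|_\infty\,\|e_{i0}g_i\|_p\le C_{i,p}\|f\|_p$.

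Finally, invert the matrix relation to extract the $f_j$. ``Untwisting'' $B$ (replacing $B$ by the coordinate function), write $\varphi_{ij}=\widetilde\varphi_{ij}\circ B$ and $g_i=\widetilde g_i\circ B$ with $\widetilde\varphi_{ij},\widetilde g_i\in H^2$, so that $\widetilde g_i=\sum_{j=1}^{r}\widetilde\varphi_{ij}\,f_j$ a.e.\ on $\T$. The condition $A^{*}A=I$, equivalently the orthonormality of $\{B^m\varphi_j\}$ in $H^2$, translates into $\widetilde A(\zeta)^{*}\widetilde A(\zeta)=I_r$ for a.e.\ $\zeta\in\T$, where $\widetilde A(\zeta)=(\widetilde\varphi_{ij}(\zeta))$; reading the diagonal entries gives $\sum_i|\widetilde\varphi_{ij}|^2=1$ a.e.\ on $\T$, hence each $\widetilde\varphi_{ij}\in H^\infty$ with $\|\widetilde\varphi_{ij}\|_\infty\le1$. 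Stacking the relations gives $\widetilde g=\widetilde A f$ with $\widetilde g=(\widetilde g_0,\ldots,\widetilde g_{n-1})^{T}$, $f=(f_1,\ldots,f_r)^{T}$; multiplying on the left by $\widetilde A(\zeta)^{*}$ yields $f=\widetilde A^{*}\widetilde g$, i.e.\ $f_k=\sum_{i=0}^{n-1}\overline{\widetilde\varphi_{ik}}\,\widetilde g_i$ a.e.\ on $\T$. Since $B$ is inner, composition with $B$ preserves $L^p(\T)$-norms, so $\|\widetilde g_i\|_p=\|g_i\|_p$ and
\[
\|f_k\|_p\ \le\ \sum_{i=0}^{n-1}\|\widetilde\varphi_{ik}\|_\infty\,\|\widetilde g_i\|_p\ \le\ \sum_{i=0}^{n-1}\|g_i\|_p\ \le\ \Bigl(\sum_{i=0}^{n-1}C_{i,p}\Bigr)\|f\|_p\ =:\ D_{k,p}\,\|f\|_p ,
\]
valid for all $1\le p\le\infty$ and $1\le k\le r$. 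That $f_k$ lies in $H^p$ and not merely in $L^p(\T)$ is automatic from $f_k\in H^2$ together with $H^p=H^1\cap L^p$.

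The main obstacle I expect is the last step: making rigorous that the ``$B$-inner matrix'' condition $A^{*}A=I$ genuinely encodes the a.e.\ isometry property $\widetilde A(\zeta)^{*}\widetilde A(\zeta)=I_r$ of the boundary matrix (from which both $\|\widetilde\varphi_{ij}\|_\infty\le1$ and the pointwise left inverse $\widetilde A(\zeta)^{*}$ are read off), and keeping the untwisting $g\mapsto g\circ B$ consistent across the $H^2$, $H^p$, and $L^p(\T)$ viewpoints. By contrast, the boundedness used in the middle step is soft once one knows the $H^p$-decomposition has closed summands, and the constants $D_{j,p}$ produced are exactly of the required form.
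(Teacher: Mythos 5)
The paper states Lemma~\ref{lem2.9} without proof, quoting it from Lance and Stessin \cite{Lance1997multiplication}, so there is no in-paper argument to compare against; judged on its own, your proof is correct and follows what is essentially the standard route for this estimate. The delicate point you single out --- whether $A^{*}A=I$ really is an almost-everywhere \emph{pointwise} identity of matrix-valued boundary functions, so that $\sum_{i}|\widetilde\varphi_{ij}|^{2}=1$ a.e.\ and $\widetilde A(\zeta)^{*}$ is a pointwise left inverse --- is resolved in your favour: that is precisely the sense in which the $B$-inner matrix condition is used throughout this paper (the proof of Theorem~\ref{theo4.5} performs the identical manipulation, multiplying the column relation by its conjugate transpose to get $|Of_{1}|^{2}+\cdots+|Of_{n}|^{2}=|g_{1}|^{2}+\cdots+|g_{r}|^{2}$ a.e.), and it is the content of the equivalence, recalled in Section~\ref{sec2} from \cite{singh1997multiplication}, between $A^{*}A=I$ and orthonormality of $\{B^{m}\varphi_{i}\}$ in $H^{2}$. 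Two small points are worth making explicit rather than implicit: the passage from $g_{i}=\bigl(\sum_{j}\widetilde\varphi_{ij}f_{j}\bigr)\circ B$ to $\widetilde g_{i}=\sum_{j}\widetilde\varphi_{ij}f_{j}$ uses injectivity of $h\mapsto h\circ B$ on $H^{1}$, which holds because the normalization $\alpha_{1}=0$ makes $B$ measure-preserving on $\T$ (this also gives the norm identity $\|h\circ B\|_{p}=\|h\|_{p}$ you invoke); and for $p=\infty$ the closed-graph step requires $e_{i0}H^{\infty}(B)$ to be norm-closed, which follows since $H^{\infty}(B)$ is weak*-closed and $|e_{i0}|$ is bounded below on $\T$. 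Neither is a gap, and the constants you produce have exactly the required form.
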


\section{Generalization of Invariant Subspace Theorem in $H^\al$ space}\label{sec3}
In \cite{sahni2012lax}, the closed subspaces of $H^p$ for $0 < p \leq \infty$, that are invariant under multiplication by all powers of $B$, except the first power has been characterized. We extend the result for a far more general Hardy spaces $H^\al$, where $\al$ is $\|.\|_1$-dominating normalized gauge norm. Before stating our main result of this section, it is convenient to first present the proof of the invariance theorem explicitly for $H^\infty$ space. The idea of the proof is new and fairly different as seen in \cite{sahni2012lax}.

We first prove the following decomposition for $H^\infty$:
\begin{equation}\label{eq3.1}
H^\infty = e_{00} H^\infty(B) \oplus e_{10} H^\infty(B) \oplus \cdots \oplus e_{n-1,0}  H^\infty(B).
\end{equation}
Note that 
\begin{equation}\label{eq3.2}
    H^\infty=H^2\cap H^\infty = (e_{00} H^2(B) \oplus e_{10} H^2(B) \oplus \cdots \oplus e_{n-1,0}  H^2(B))\cap H^\infty.
\end{equation}
We will establish that the right hand sides of (\ref{eq3.1}) and (\ref{eq3.2}) coincide.\\
It is trivial to see that the right hand side of (\ref{eq3.1}) is contained in the right hand side of (\ref{eq3.2}). To establish the reverse containment, take any $f \in  \left ( e_{00} H^2(B) \oplus e_{10} H^2(B)  \oplus\cdots\oplus e_{n-1,0}  H^2(B) \right) \cap H^\infty$.
 This means that $f \in H^\infty$ and $f = e_{00} f_1 + e_{10} f_2 +\cdots+ e_{n-1,0}  f_n$ for some $f_1,\ldots, f_n \in H^2(B)$. Now in view of Lemma~\ref{lem2.8}, we find that $f_1,\ldots,f_n \in H^\infty(B)$. This completes the argument.

The proofs of Theorem~\ref{theo3.3} and Theorem~\ref{theo3.4} shall make use of the descriptions of common invariant subspaces established in \cite{sahni2012lax} and the invariant subspaces established in \cite{singh1997multiplication}. For the reader's convenience, we record both these results as under:
\begin{thm}[Sahni and Singh~\cite{sahni2012lax}]\label{theo3.1}
Let $\mathcal{M}$ be a closed subspace of $H^2$ invariant under ${H_1}^\infty(B)$ but not under $H^\infty(B)$. Then there exist $B$-inner functions $J_1,\ldots,J_r$ ($r \leq n$) such that
\begin{center}
     \[\mathcal{M} = \left (\sum_{i=1}^{k} \oplus \langle \varphi_i \rangle \right ) \oplus \sum_{j=1}^{r} \oplus B^2 J_j H^2(B)\]
\end{center}
where $k \leq 2r-1$, and $\varphi_i$ = $(\beta_{1i} + \beta_{2i}B)J_1 + (\beta_{3i} + \beta_{4i}B)J_2  + \ldots + (\beta_{2r-1,i} + \beta_{2r,i}B)J_r$,  for all i = 1,\ldots,k. Moreover, $\lvert\beta_{1i}\rvert^2 + \lvert\beta_{2i}\rvert^2 + \ldots + \lvert\beta_{2r,i}\rvert^2 = 1$.
\end{thm}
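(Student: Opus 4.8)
The plan is to realize $\mathcal{M}$ as a finite-dimensional extension of an explicit $T_B$-invariant subspace, by sandwiching it between two such subspaces and counting dimensions in the orthogonal decomposition \eqref{eq2.1}. Set $\mathcal{N} = \overline{\mathcal{M} + T_B\mathcal{M}}$. Since invariance under $H_1^\infty(B)$ is the same as invariance under both $T_{B^2}$ and $T_{B^3}$, we have $T_B\mathcal{N} = \overline{T_B\mathcal{M} + T_B^2\mathcal{M}} \subseteq \mathcal{N}$ (using $T_B^2\mathcal{M}\subseteq\mathcal{M}\subseteq\mathcal{N}$), so $\mathcal{N}$ is $T_B$-invariant. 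Moreover $T_B^2$ is an isometry, hence maps closed subspaces to closed subspaces and commutes with norm-closure, so
\[
T_B^2\mathcal{N} \;=\; \overline{T_B^2\mathcal{M} + T_B^3\mathcal{M}} \;\subseteq\; \mathcal{M},
\]
the last inclusion using $T_{B^2}\mathcal{M}\subseteq\mathcal{M}$ and $T_{B^3}\mathcal{M}\subseteq\mathcal{M}$ together. Thus $T_B^2\mathcal{N}\subseteq\mathcal{M}\subseteq\mathcal{N}$.

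Next I would invoke the Beurling-type description of $T_B$-invariant subspaces of $H^2$ in terms of $B$-inner functions from \cite{singh1997multiplication}. Since $\mathcal{M}\neq\{0\}$ (the zero subspace is $H^\infty(B)$-invariant, so it is excluded), $\mathcal{N}$ is a nonzero closed $T_B$-invariant subspace, hence $\mathcal{N} = \sum_{j=1}^{r}\oplus\, J_jH^2(B)$ for some $B$-inner functions $J_1,\dots,J_r$ whose associated matrix is $B$-inner, with $r\le n$ because the $T_B$-multiplicity of $H^2$ equals $n$ (cf. \eqref{eq2.1}). The $B$-inner condition says precisely that $\{B^mJ_j : 1\le j\le r,\ m\ge 0\}$ is orthonormal in $H^2$; in particular the sum defining $\mathcal{N}$ is orthogonal and each $J_jH^2(B)$ is an isometric copy of $H^2(B)$. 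Consequently $T_B^2\mathcal{N} = \sum_{j=1}^{r}\oplus\, B^2J_jH^2(B)$ and
\[
\mathcal{N}\ominus T_B^2\mathcal{N} \;=\; \bigoplus_{j=1}^{r}\mathrm{span}\{J_j,\, BJ_j\},
\]
a space of dimension $2r$.

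Since $T_B^2\mathcal{N}\subseteq\mathcal{M}\subseteq\mathcal{N}$, I can write $\mathcal{M} = T_B^2\mathcal{N}\oplus\mathcal{V}$ with $\mathcal{V} := \mathcal{M}\ominus T_B^2\mathcal{N}$ a subspace of the $2r$-dimensional space $\mathcal{N}\ominus T_B^2\mathcal{N}$; set $k=\dim\mathcal{V}$. Choosing an orthonormal basis $\varphi_1,\dots,\varphi_k$ of $\mathcal{V}$ and expanding in the basis $\{J_j,BJ_j\}_{j=1}^r$ gives, with \emph{scalar} coefficients, $\varphi_i = \sum_{j=1}^{r}(\beta_{2j-1,i}+\beta_{2j,i}B)J_j$, and orthonormality of the $B^mJ_j$ forces $\|\varphi_i\|_2^2 = \sum_{j=1}^{r}(|\beta_{2j-1,i}|^2+|\beta_{2j,i}|^2)=1$. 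Finally $k\le 2r-1$: if $k=2r$ then $\mathcal{V}=\mathcal{N}\ominus T_B^2\mathcal{N}$, so $\mathcal{M}=\mathcal{N}$, which is $T_B$-invariant and hence $H^\infty(B)$-invariant by Lemma~\ref{lem2.5}, contradicting the hypothesis; likewise $k\ge 1$, since $T_B^2\mathcal{N}$ by itself is $T_B$-invariant. Reassembling, $\mathcal{M} = \bigl(\sum_{i=1}^{k}\oplus\langle\varphi_i\rangle\bigr)\oplus\sum_{j=1}^{r}\oplus\, B^2J_jH^2(B)$, which is the asserted form.

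The step I expect to be the main obstacle is the inclusion $T_B^2\mathcal{N}\subseteq\mathcal{M}$: this is exactly where invariance under $T_{B^3}$ (not just $T_{B^2}$) is essential, since the point is that $T_B^2(\mathcal{M}+T_B\mathcal{M}) = T_B^2\mathcal{M}+T_B^3\mathcal{M}\subseteq\mathcal{M}$, and one must also be careful that the isometry $T_B^2$ genuinely passes through the norm-closure. Once these are in hand, the remainder is bookkeeping with the orthogonal decomposition \eqref{eq2.1} and the cited $B$-inner description of $T_B$-invariant subspaces.
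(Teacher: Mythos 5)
This theorem is quoted in the paper from \cite{sahni2012lax} and no proof is given here, so there is nothing internal to compare against; judged on its own, your argument is correct and is the standard route for such constrained-invariance results. The sandwich $T_B^2\mathcal{N}\subseteq\mathcal{M}\subseteq\mathcal{N}$ with $\mathcal{N}=\overline{\mathcal{M}+T_B\mathcal{M}}$ is exactly where invariance under both $T_{B^2}$ and $T_{B^3}$ enters, and your handling of the closure under the isometry $T_B^2$ is sound. Applying Theorem~\ref{theo3.2} to $\mathcal{N}$ and computing $\mathcal{N}\ominus T_B^2\mathcal{N}=\bigoplus_{j=1}^{r}\mathrm{span}\{J_j,BJ_j\}$ is legitimate, with one point worth making explicit: you need orthonormality of the \emph{full} family $\{B^mJ_j : 1\le j\le r,\ m\ge 0\}$, not just of each $\{B^mJ_j\}_{m\ge 0}$ separately; this follows because the direct sum in Theorem~\ref{theo3.2} is orthogonal and $B^mJ_j\in J_jH^2(B)$, equivalently because the associated matrix is $B$-inner ($A^*A=I$). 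With that in place, the dimension count, the normalization $\sum_j(|\beta_{2j-1,i}|^2+|\beta_{2j,i}|^2)=1$ from choosing an orthonormal basis of $\mathcal{V}$, and the exclusion of $k=2r$ (which would force $\mathcal{M}=\mathcal{N}$, hence $T_B$-invariance, hence $H^\infty(B)$-invariance by Lemma~\ref{lem2.5}) are all correct, so the proposal stands as a complete proof modulo the cited Theorem~\ref{theo3.2}.
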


\begin{thm}[Singh and Thukral~\cite{singh1997multiplication}]\label{theo3.2}
Suppose $\mathcal{M}$ is a closed subspace of $H^2$ invariant under $H^\infty(B)$. Then there exist $B$-inner functions $J_1,\ldots,J_r$ ($r \leq n$) such that
$$ \mathcal{M} = J_1 H^2(B) \oplus J_2 H^2(B) \oplus\cdots\oplus J_r H^2(B). $$
Also the above representation is unique in the following sense: Suppose 
\[ \mathcal{M} = I_1 H^2(B) \oplus I_2 H^2(B) \oplus\cdots\oplus I_s H^2(B) \]
then $r =s$, and each $J_i = \sum\limits_{k=0}^{r} \phi_{ik} I_k$ for some scalars $\phi_{ik}$ such that the matrix $(\phi_{ik})$ is unitary. 
\end{thm}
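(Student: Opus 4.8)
The plan is to deduce both parts of Theorem~\ref{theo3.2} from the operator-valued Beurling--Lax--Halmos theorem on the $\C^n$-valued Hardy space $H^2_{\C^n}=H^2\otimes\C^n$, using the orthogonal decomposition \eqref{eq2.1} as a bridge between scalar functions on $\T$ and vector-valued ones.

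First I would build a unitary identification of $H^2$ with $H^2_{\C^n}$ that turns $T_B$ into the shift. Since $\{B^m:m\ge 0\}$ is an orthonormal basis of $H^2(B)$, the assignment $B\mapsto z$ extends to a unitary $U\colon H^2(B)\to H^2$; performing $U$ in each summand of \eqref{eq2.1} produces a unitary $V\colon H^2\to H^2_{\C^n}$ that sends $e_{j0}\,g(B)$ to the vector having $g$ in its $(j{+}1)$-st slot and $0$ elsewhere. A one-line check gives $VT_B=M_zV$, where $M_z$ is multiplication by the coordinate function on $H^2_{\C^n}$. Hence, for a closed subspace $\mathcal M\subseteq H^2$, invariance under $T_B$ --- equivalently, by Lemma~\ref{lem2.5} applied to the norm $\|\cdot\|_2$, invariance under $H^\infty(B)$ --- is the same as $V\mathcal M$ being a shift-invariant subspace of $H^2_{\C^n}$.

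Next, Beurling--Lax--Halmos yields an inner matrix function $\Theta$ of size $n\times r$ with $r\le n$ such that $V\mathcal M=\Theta H^2_{\C^r}$, unique up to a constant unitary factor on the right. Splitting $\Theta$ into columns and transporting back through $V^{-1}$, the $j$-th column becomes a function $J_j$; because $\Theta(z)$ is a contraction for every $z\in\D$, each column lies in $H^\infty_{\C^n}$, and since the entries $e_{i0}$ are bounded and $g\mapsto g(B)$ preserves $H^\infty$, we get $J_j\in H^\infty$. The identity $\Theta^{*}\Theta=I_r$ a.e.\ is, by the equivalence recalled in Section~\ref{sec2} between the $B$-inner matrix condition $A^{*}A=I$ and orthonormality of $\{B^m\varphi_i\}$, precisely the statement that $\{B^mJ_j:1\le j\le r,\ m\ge 0\}$ is orthonormal in $H^2$; in particular each $J_j$ is $B$-inner and the spaces $J_jH^2(B)$ are mutually orthogonal. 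Applying $V^{-1}$ to $\Theta H^2_{\C^r}$ then gives $\mathcal M=J_1H^2(B)\oplus\cdots\oplus J_rH^2(B)$. For the uniqueness clause, if $\mathcal M$ also equals $I_1H^2(B)\oplus\cdots\oplus I_sH^2(B)$, the two associated inner functions $\Theta$ and $\Xi$ have the same range, so Beurling--Lax--Halmos uniqueness forces $r=s$ and $\Theta=\Xi W$ for a constant unitary $W$; reading this off column by column gives $J_i=\sum_k\phi_{ik}I_k$ with $(\phi_{ik})$ --- essentially the conjugate transpose of $W$ --- unitary.

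The step I expect to carry the real weight is the first one: checking that $V$ is well defined and unitary, that it intertwines $T_B$ with $M_z$, and --- the crux --- that ``$\Theta$ inner'' corresponds exactly, column by column, to ``the $J_j$ are $B$-inner with mutually orthogonal ranges'' in the sense used here. Once that dictionary is set up, the rest is a transcription of Beurling--Lax--Halmos. A self-contained alternative that avoids the vector-valued machinery is to work directly inside $\mathcal M$: form the wandering subspace $\mathcal W=\mathcal M\ominus T_B\mathcal M$, use \eqref{eq2.1} to see $\dim\mathcal W\le n$, pick an orthonormal basis $J_1,\dots,J_r$ of $\mathcal W$, verify each is $B$-inner, and recover $\mathcal M=\bigoplus_jJ_jH^2(B)$ from the Wold decomposition of the isometry $T_B|_{\mathcal M}$; in this route the delicate point --- automatic in the matrix picture but not obvious directly --- is that the basis functions of $\mathcal W$ are genuinely bounded, i.e.\ really lie in $H^\infty$.
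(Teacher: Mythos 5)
The paper does not prove Theorem~\ref{theo3.2}: it is imported verbatim from Singh and Thukral \cite{singh1997multiplication} and used as a black box, so there is no internal proof to measure your argument against. Judged on its own, your vector-valued Beurling--Lax--Halmos route is correct and complete in outline. The unitary $V$ built from the orthonormal basis $\{e_{jm}=e_{j0}B^m\}$ of \eqref{eq2.1} does intertwine $T_B$ with the shift on $H^2_{\mathbb{C}^n}$, and the two points you single out as carrying the weight are exactly the right ones: (i) the translation of $\Theta^{*}\Theta=I_r$ a.e.\ on $\T$ into the $B$-inner matrix condition $A^{*}A=I$ requires that a.e.\ statements transfer under $w=B(z)$, which holds because $B$ is a local diffeomorphism of $\T$ (and, with $\alpha_1=0$, even measure-preserving), and the paper's Section~\ref{sec2} already records the equivalence of $A^{*}A=I$ with orthonormality of $\{B^mJ_j\}$; (ii) boundedness of the $J_j$ is automatic in this picture since the entries of an inner $\Theta$ are bounded by $1$ and the $e_{i0}$ lie in $H^\infty$. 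Your uniqueness argument via the constant unitary right factor is also fine (the matrix $(\phi_{ik})$ is the transpose of $W$, which is still unitary; the summation index in the statement should of course run from $1$ to $r$). For the record, the cited source argues essentially along the lines of your ``self-contained alternative,'' via the wandering subspace $\mathcal{M}\ominus B\mathcal{M}$ of the isometry $T_B|_{\mathcal{M}}$; as you correctly observe, the only delicate step there --- that an orthonormal basis of the wandering subspace consists of bounded functions --- follows from the same $A^{*}A=I$ identity, so both routes close.
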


Now we state and prove the common invariant subspace characterization for $H^\infty$.

\begin{thm}\label{theo3.3}
Let $\mathcal{M}$ be a weak*-closed subspace of $H^\infty$ such that $\mathcal{M}$ is invariant under ${H_1}^\infty(B)$, but not under $H^\infty(B)$. Then there exist $B$-inner functions $J_1,\ldots,J_r$ ($r \leq n$) such that
\begin{center}
     \[ \mathcal{M} = \left (\sum_{i=1}^{k} \oplus \langle \varphi_i \rangle \right ) \oplus \sum_{j=1}^{r} \oplus B^2 J_j H^\infty(B),\]
\end{center}
where $k \leq 2r-1$, and $\varphi_i$ = $(\beta_{1i} + \beta_{2i}B)J_1 + (\beta_{3i} + \beta_{4i}B)J_2  +\ldots + (\beta_{2r-1,i} + \beta_{2r,i}B)J_r$, for all i = 1,\ldots,k. We shall also see that $\beta_{1i}, \beta_{2i},\ldots, \beta_{2r,i}$ are complex numbers with $\lvert \beta_{1i}\rvert^2 + \lvert \beta_{2i}\rvert^2 +\cdots+ \lvert\beta_{2r,i}\rvert^2 = 1$.
\end{thm}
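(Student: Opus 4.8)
The plan is to reproduce, in the weak$^*$-closed category, the three–step argument behind Theorem~\ref{theo3.1}, substituting the decomposition (\ref{eq3.1}) and Lemma~\ref{lem2.8} for the $H^2$–facts used there, and invoking Gamelin's Lemma~\ref{lem2.7} to upgrade $H^2$–approximations to weak$^*$–approximations. First I would form the \emph{$H^\infty(B)$–hull} $\mathcal{M}'$ of $\mathcal{M}$, namely the weak$^*$–closure of $\mathcal{M}+B\mathcal{M}$ in $H^\infty$. Because $B^2,B^3\in H_1^\infty(B)$ one has $B^{j}\mathcal{M}\subset\mathcal{M}$ for every $j\ge 2$, so $B(\mathcal{M}+B\mathcal{M})\subset\mathcal{M}+B\mathcal{M}+B^2\mathcal{M}\subset\mathcal{M}+B\mathcal{M}$; since $T_B$ is weak$^*$–continuous, $\mathcal{M}'$ is $T_B$–invariant, hence (by the reasoning of Lemma~\ref{lem2.5}, now carried out with weak$^*$–closures) invariant under the whole algebra $H^\infty(B)$.

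The decisive step, and the one I expect to be the main obstacle, is the $H^\infty$–analogue of Theorem~\ref{theo3.2}: that $\mathcal{M}' = J_1 H^\infty(B)\oplus\cdots\oplus J_r H^\infty(B)$ for some $B$–inner $J_1,\dots,J_r$ with $r\le n$. To get this I would take the $H^2$–closure of $\mathcal{M}'$, which is again $H^\infty(B)$–invariant, so by Theorem~\ref{theo3.2} it equals $\bigoplus_j J_j H^2(B)$; then $\bigl(\bigoplus_j J_j H^2(B)\bigr)\cap H^\infty=\bigoplus_j J_j H^\infty(B)$ by Lemma~\ref{lem2.8}, exactly as (\ref{eq3.1}) was deduced from (\ref{eq3.2}), so $\mathcal{M}'\subset\bigoplus_j J_j H^\infty(B)$. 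The reverse inclusion reduces to showing $J_j\in\mathcal{M}'$ for each $j$; then $J_j p(B)\in\mathcal{M}'$ for every polynomial $p$, and weak$^*$–limits of Cesaro means give $J_j H^\infty(B)\subset\mathcal{M}'$. To see $J_j\in\mathcal{M}'$, pick $f_k\in\mathcal{M}'$ with $f_k\to J_j$ in $H^2$, write $f_k=\sum_l J_l g_{k,l}$ with $g_{k,l}\in H^\infty(B)$ (so $g_{k,l}\to\delta_{lj}$ in $H^2$ by orthogonality of the summands), and apply Gamelin's Lemma~\ref{lem2.7} \emph{in the variable $B$} --- transporting via the isometries $H^2(B)\cong H^2$, $H^\infty(B)\cong H^\infty$ and using that the Blaschke map pushes normalized Lebesgue measure onto itself --- to the finitely many sequences $(g_{k,l})_k$; a product of the resulting multipliers yields $h_k\in H^\infty(B)$, uniformly bounded, $h_k\to 1$ in measure, with $h_k g_{k,l}\to\delta_{lj}$ weak$^*$ for every $l$, whence $h_k f_k=\sum_l J_l(h_k g_{k,l})\to J_j$ weak$^*$. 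Since $h_k\in H^\infty(B)$ and $\mathcal{M}'$ is $H^\infty(B)$–invariant, $h_k f_k\in\mathcal{M}'$, so $J_j\in\mathcal{M}'$. This is precisely why one must pass to $\mathcal{M}'$ rather than work with $\mathcal{M}$ directly: Gamelin's taming factors only land in $H^\infty(B)$, while $\mathcal{M}$ itself is merely $H_1^\infty(B)$–invariant, which is too small an algebra to run this transfer.

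The remaining steps parallel the $H^2$ proof. One checks $B^2\mathcal{M}'\subset\mathcal{M}$ (for $m'\in\mathcal{M}'$, $B^2 m'$ is a weak$^*$–limit of elements $B^2 a_k+B^3 b_k$ with $a_k,b_k\in\mathcal{M}$, each of which lies in $\mathcal{M}$ because $B^2,B^3\in H_1^\infty(B)$), and $B^2\mathcal{M}'=\bigoplus_j B^2 J_j H^\infty(B)$; each $B^2 J_j$ is again $B$–inner, so $B^2 J_j H^\infty(B)=\bigl(B^2 J_j H^2(B)\bigr)\cap H^\infty$ by Lemma~\ref{lem2.8}, and a subspace of the form (closed subspace of $H^2$)$\,\cap H^\infty$ is weak$^*$–closed, being an intersection of kernels of weak$^*$–continuous functionals. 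Hence
$$\bigoplus_{j=1}^r B^2 J_j H^\infty(B)\ \subset\ \mathcal{M}\ \subset\ \mathcal{M}'=\bigoplus_{j=1}^r J_j H^\infty(B),$$
so that $\mathcal{M}'\big/\bigoplus_j B^2 J_j H^\infty(B)\cong\bigoplus_j\bigl(J_j H^\infty(B)/B^2 J_j H^\infty(B)\bigr)$ is the $2r$–dimensional space spanned by the orthonormal family $\{J_j,\ B J_j:1\le j\le r\}$. Writing $V$ for the image of $\mathcal{M}$ in this quotient and $k=\dim V$, I would choose a basis of $V$, lift it to elements of $\mathcal{M}$ of the shape $\varphi_i=\sum_{j=1}^r(\beta_{2j-1,i}+\beta_{2j,i}B)J_j$, and rescale so that $\sum_{l=1}^{2r}|\beta_{l,i}|^2=\|\varphi_i\|_2^2=1$; this gives
$$\mathcal{M}=\Bigl(\sum_{i=1}^k\oplus\langle\varphi_i\rangle\Bigr)\oplus\sum_{j=1}^r\oplus B^2 J_j H^\infty(B).$$
Finally, if $k=2r$ then $V$ is all of the quotient, forcing $\mathcal{M}=\mathcal{M}'$, which is $H^\infty(B)$–invariant, contrary to hypothesis; hence $k\le 2r-1$. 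The only genuinely delicate point in the whole argument is the weak$^*$–transfer of the middle paragraph; everything else is bookkeeping with (\ref{eq3.1}) and Lemma~\ref{lem2.8}.
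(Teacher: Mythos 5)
Your argument is correct, but it follows a genuinely different route from the paper's. The paper's proof never forms an $H^\infty(B)$-hull: it shows directly that $\mathcal{M}=\overline{\mathcal{M}}^{\|\cdot\|_2}\cap H^\infty$, the key trick being to compose the Gamelin taming factors with $B^2$, so that the resulting multipliers lie in $H^\infty(B^2)\subset H_1^\infty(B)$ and therefore multiply $\mathcal{M}$ into itself --- this is how the paper gets around exactly the obstacle you identify (that $H_1^\infty(B)$ is ``too small'' an algebra for the transfer). Having done that, the paper simply imports the full $H^2$ structure theorem for $H_1^\infty(B)$-invariant subspaces (Theorem~\ref{theo3.1}) applied to $\overline{\mathcal{M}}^{\|\cdot\|_2}$ and intersects with $H^\infty$ via Lemma~\ref{lem2.8}. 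You instead use only the simpler Theorem~\ref{theo3.2}: you prove the $H^\infty$-analogue of the Lax--Halmos theorem for the hull $\mathcal{M}'$ (which the paper proves separately as Theorem~\ref{theo3.4}, by the same $\overline{\,\cdot\,}^{\|\cdot\|_2}\cap H^\infty$ device rather than your two-sided inclusion with Gamelin in the variable $B$), and then recover $\mathcal{M}$ by the finite-dimensional quotient $\mathcal{M}'/B^2\mathcal{M}'$. In effect you re-prove the combinatorial content of Theorem~\ref{theo3.1} in the weak$^*$ category instead of citing it; what this buys is a transparent explanation of the bound $k\le 2r-1$ (equality would force $\mathcal{M}=\mathcal{M}'$, hence $H^\infty(B)$-invariance), at the cost of an extra weak$^*$-transfer argument for $\mathcal{M}'$ that the paper's $B^2$-composition trick avoids. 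Two small points of bookkeeping: to match the orthogonal-sum form $\sum_i\oplus\langle\varphi_i\rangle$ you should Gram--Schmidt your basis of $V$ inside the span of the orthonormal family $\{J_j,BJ_j\}$; and you should note explicitly that the ``pure'' lift $\varphi_i=\sum_j(\beta_{2j-1,i}+\beta_{2j,i}B)J_j$ lands in $\mathcal{M}$ because an arbitrary lift in $\mathcal{M}$ differs from it by an element of $B^2\mathcal{M}'\subset\mathcal{M}$. Your Gamelin product step is at the same level of rigor as the corresponding step in the paper.
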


\begin{proof}
We first show that ${\overline{\mathcal{M}}}^{\|.\|_2} \cap H^\infty = \mathcal{M}$. Note that $\mathcal{M} \subset {\overline{\mathcal{M}}}^{\|.\|_2} $ and hence, $ \mathcal{M} \subset {\overline{\mathcal{M}}}^{\|.\|_2} \cap H^\infty$. 

Conversely, start with any $f \in {\overline{\mathcal{M}}}^{\|.\|_2} \cap H^\infty$. So, we can find a sequence $\{f_k\}$ in $\mathcal{M}$ such that $f_k \longrightarrow f$ in the norm of $H^2$. Here $\{f_k\}$ and $f$ are functions of $H^\infty$. Using the decomposition $H^\infty = e_{00} H^\infty(B) \oplus e_{10} H^\infty(B) \oplus\cdots\oplus e_{n-1,0}  H^\infty(B)$, we can write
\begin{equation}\label{eq3.3}
f_{k} = e_{00} {f_k}^{(1)} + e_{10} {f_k}^{(2)} +\cdots+ e_{n-1,0}  {f_k}^{(n)},
\end{equation}
\begin{equation}\label{eq3.4}
 f = e_{00} {f}^{(1)} + e_{10} {f}^{(2)} +\cdots+ e_{n-1,0} {f}^{(n)},
\end{equation}
where ${f_k}^{(i)}$ and $f^{(i)}$ belong to $H^\infty(B)$.\\ 
The following computation leads to the conclusion that $f_k^{(i)} \to f^{(i)}$ in $H^2$:
\begin{equation}\label{eq3.5}
\begin{aligned}
{\|f_k - f\|_2}^2 &= {\|e_{00} (f_k^{(1)} - f^{(1)}) + e_{10} (f_k^{(2)} - f^{(2)})+\cdots+ e_{n-1,0}  (f_k^{(n)} - f^{(n)})\|_2}^2  \\
&= {\|f_k^{(1)} - f^{(1)}\|_2}^2 + {\|f_k^{(2)} - f^{(2)}\|_2}^2 +\cdots+ {\|f_k^{(n)} - f^{(n)}\|_2}^2.\\ 
\end{aligned}
\end{equation}

By Lemma~\ref{lem2.7}, there exists a sequence $\{{h_k}^{(i)}\}$ of uniformly bounded $H^\infty$ functions that converges to 1 $a.e.$ such that $h_k^{(i)} f_k^{(i)}$ $\longrightarrow$ $f^{(i)}$ in the weak* topology.\\ 
Define
\begin{center}
    $g_k$ = $h_k^{(1)}(B^2) h_k^{(2)}(B^2) \ldots h_k^{(n)}(B^2)$.
\end{center}
Observe that $g_k \in H^\infty(B^2)$ and $h_k^{(i)}\to 1 \; a.e.$ as $k \to \infty$. Thus, $h_k^{(i)}(B^2)$ is a uniformly bounded sequence which converges to 1 $a.e.$ (because the composition operator on $H^\infty$ induced by $B^2$ is a contraction ,i.e, $\|foB^2\|_\infty \leq \|f\|_\infty$).
This forces $g_k\to 1 \; a.e.$\\
For each $i$, $f_k^{(i)} \to f^{(i)}$ in $H^2$, there exists subsequence $f_{k_j^{(i)}} \to f^{(i)} \; a.e.$ Without loss of generality, assume that $f_k^{(i)} \to f^{(i)} \;a.e$. This gives $\{g_k f_k^{(i)}\}$ converges to $f^{(i)} \;a.e$.\\
For any arbitrary $u \in L^1$, $\{g_k f_k^{(i)} u \}$ $\longrightarrow$ $f^{(i)} u \; a.e.$ and
\begin{center}
    $\lvert g_k f_k^{(i)} u - f^{(i)} u + f^{(i)} u \rvert$ $\leq$ $\lvert g_k f_k^{(i)} u - f^{(i)} u \rvert + \lvert f^{(i)} u \rvert $.\\
\end{center}
In view of the convergence of $\{g_k f_k^{(i)} u \}$, we can find a natural number $k_0$ such that
$$\lvert g_k f_k^{(i)} u \rvert  < 1 + \lvert f^{(i)}u \rvert \; \; a.e. \; \; \text{for all} \; \;  k \ge k_0.$$
By the Dominated Convergence theorem,
\begin{center}
    $\int_{\T} g_k f_k^{(i)} u dm$ $\longrightarrow$ $\int_{\T} f^{(i)} u dm $.
\end{center}
Since $u$ was an arbitrary element of $L^1$, so $\{g_k f_k^{(i)}\}$ converges to $f^{(i)}$ for fixed $i = 1,\ldots,n$ in the weak* topology.\\
As $e_{i0} u \in L^1$ whenever $u \in L^1$,  $\{e_{i0} g_k f_k\}$ $\longrightarrow$ $e_{i0} f^{(i)}$ in the weak* topology.\\
Therefore, \[ \sum_{i=0}^{n-1} e_{i0} g_k f_k^{(i+1)} \; \text{converges weak}^* \; \text{to} \; \sum_{i=0}^{n-1} e_{i0} f^{(i+1)} .\]
This implies that $\{{g_k} {f_k}\}$ converges weak* to $f$. Since ${g_k} \in H^\infty(B^2)$ and $\mathcal{M}$ is invariant under the operator $T_{B^2}$, we have $\{{g_k} {f_k}\}\in \mathcal{M}$. Further, since $\mathcal{M}$ is weak*-closed, $f \in \mathcal{M}$ and hence ${\overline{\mathcal{M}}}^{\|.\|_2} \cap H^\infty \subset \mathcal{M}$.

Observe that $\overline{\mathcal{M}}^{\|.\|_2}$ is a closed subspace of $H^2$ which is invariant under $H_1^\infty(B)$, but not under $H^\infty(B)$. So by Theorem~\ref{theo3.1}, we can write
\begin{equation}\label{eq3.6}
   \overline{\mathcal{M}}^{\|.\|_2} = \left (\sum_{i=1}^{k} \oplus \langle \varphi_i \rangle \right ) \oplus \sum_{j=1}^{r} \oplus B^2 J_j H^2(B)
\end{equation}
where $k \leq 2r-1$, and $\varphi_i$ = $(\beta_{1i} + \beta_{2i}B)J_1 + (\beta_{3i} + \beta_{4i}B)J_2  +  ...  + (\beta_{2r-1,i} + \beta_{2r,i}B)J_r$,  for all i = 1,2,...,k.\\
Therefore, we have 
\[ \mathcal{M} =\overline{\mathcal{M}}^{\|.\|_2} \cap H^\infty = \left(\left (\sum_{i=1}^{k} \oplus \langle \varphi_i \rangle \right ) \oplus \sum_{j=1}^{r} \oplus B^2 J_j H^2(B) \right) \cap H^\infty.\]
\par
It remains to be shown that $\mathcal{M} = \left (\sum_{i=1}^{k} \oplus \langle \varphi_i \rangle \right ) \oplus \sum_{j=1}^{r} \oplus B^2 J_j H^\infty(B)$.\\
Note that
\begin{center}
    $H^\infty \cap \left[\left (\sum_{i=1}^{k} \oplus \langle \varphi_i \rangle \right ) \oplus \sum_{j=1}^{r} \oplus B^2 J_j H^\infty(B)\right]$ $\subset$ $H^\infty \cap \left[\left( (\sum_{i=1}^{k} \oplus \langle \varphi_i \rangle \right ) \oplus \sum_{j=1}^{r} \oplus B^2 J_j H^2(B)\right]$.
\end{center}
Therefore, 
\begin{equation}\label{eq3.7}
    \left (\sum_{i=1}^{k} \oplus \langle \varphi_i \rangle \right ) \oplus \sum_{j=1}^{r} \oplus B^2 J_j H^\infty(B) \subset \mathcal{M}.
\end{equation}
For the reverse inclusion, write an arbitrary $f \in \mathcal{M}$ as 
\begin{center}
    $f$ = $\left (\sum_{i=1}^{k} a_i \varphi_i  \right ) + \sum_{j=1}^{r} B^2 J_j f_j$ 
\end{center}
where $a_i$'s are complex numbers and $f_1 ,\ldots, f_r$ $\in H^2(B)$.\\
Since the function $ h = \sum_{j=1}^{r} B^2 J_j f_j \in H^\infty$ and $J_1,\ldots,J_r$ are $B$-inner, so by Lemma~\ref{lem2.8}, the functions $B^2f_1,\ldots,B^2f_r$ are in $H^\infty(B)$. Hence $f_1,\ldots,f_r$ are also in $H^\infty(B)$.
This forces $f \in \left (\sum_{i=1}^{k} \oplus \langle \varphi_i \rangle \right ) \oplus \sum_{j=1}^{r} \oplus B^2 J_j H^\infty(B)$. This completes the proof.
\end{proof} 

Now, we obtain the $B$-invariant subspaces for $H^\infty$ using elementary arguments which completely avoids the general inner-outer factorization of $H^p$ functions in terms of $B$-inner functions and outer functions. For details, the reader may refer to Proposition $6$ in  \cite{Lance1997multiplication}. Our proof is on similar lines as the proof of the decomposition (\ref{eq3.1}) and is even more elementary than the proof presented in \cite{sahni2012lax}.

\begin{thm}\label{theo3.4}
Let $\mathcal{M}$ be a weak*-closed subspace of $H^\infty$ such that $\mathcal{M}$ is invariant under $T_B$. Then there exist $B$-inner functions $J_1,\ldots,J_r$ ($r \leq n$) such that
     \[\mathcal{M} = J_1 H^\infty(B) \oplus\cdots\oplus J_r H^\infty(B).\]
\end{thm}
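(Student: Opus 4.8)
The plan is to mimic the proof of Theorem~\ref{theo3.3}, passing between $H^\infty$ and $H^2$ via the decomposition (\ref{eq3.1}) and the $H^2$-result Theorem~\ref{theo3.2}. First I would observe that by Lemma~\ref{lem2.5}, the hypothesis that $\mathcal{M}$ is $T_B$-invariant is equivalent to $\mathcal{M}$ being invariant under the whole algebra $H^\infty(B)$; in particular it is invariant under $T_{B^2}$. The first main step is to establish that $\overline{\mathcal{M}}^{\|.\|_2}\cap H^\infty = \mathcal{M}$. The inclusion $\mathcal{M}\subset\overline{\mathcal{M}}^{\|.\|_2}\cap H^\infty$ is immediate. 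For the reverse inclusion I would take $f\in\overline{\mathcal{M}}^{\|.\|_2}\cap H^\infty$, pick $f_k\in\mathcal{M}$ with $f_k\to f$ in $H^2$, decompose $f_k = \sum_{i=0}^{n-1} e_{i0} f_k^{(i+1)}$ and $f = \sum_{i=0}^{n-1} e_{i0} f^{(i+1)}$ with all components in $H^\infty(B)$ by (\ref{eq3.1}), note (using orthonormality of the $e_{i0}H^2(B)$ summands exactly as in (\ref{eq3.5})) that $f_k^{(i)}\to f^{(i)}$ in $H^2$, apply Lemma~\ref{lem2.7} to get uniformly bounded $h_k^{(i)}\in H^\infty$ with $h_k^{(i)}\to 1$ a.e.\ and $h_k^{(i)} f_k^{(i)}\to f^{(i)}$ weak*, set $g_k = h_k^{(1)}(B^2)\cdots h_k^{(n)}(B^2)\in H^\infty(B^2)$, and repeat the dominated-convergence argument of Theorem~\ref{theo3.3} verbatim to conclude $g_k f_k\to f$ weak*. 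Since $g_k\in H^\infty(B^2)\subset H^\infty(B)$ and $\mathcal{M}$ is $H^\infty(B)$-invariant, $g_k f_k\in\mathcal{M}$, and weak*-closedness gives $f\in\mathcal{M}$.

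The second step uses that $\overline{\mathcal{M}}^{\|.\|_2}$ is a closed $H^\infty(B)$-invariant subspace of $H^2$, so Theorem~\ref{theo3.2} yields $B$-inner functions $J_1,\ldots,J_r$ ($r\le n$) with $\overline{\mathcal{M}}^{\|.\|_2} = J_1 H^2(B)\oplus\cdots\oplus J_r H^2(B)$. Combining with Step~1,
\[ \mathcal{M} = \bigl(J_1 H^2(B)\oplus\cdots\oplus J_r H^2(B)\bigr)\cap H^\infty. \]
The third step is to identify this intersection with $J_1 H^\infty(B)\oplus\cdots\oplus J_r H^\infty(B)$. The inclusion $\supset$ is trivial since each $J_j\in H^\infty$ and $H^\infty(B)H^\infty\subset H^\infty$, and the sum is direct because it already is so inside $H^2$. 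For $\subset$, take $f\in\mathcal{M}$; write $f = J_1 f_1 + \cdots + J_r f_r$ with $f_j\in H^2(B)$; since $f\in H^\infty$ and the $J_j$ are $B$-inner, Lemma~\ref{lem2.8} forces each $f_j\in H^\infty(B)$, so $f\in J_1 H^\infty(B)\oplus\cdots\oplus J_r H^\infty(B)$. This completes the argument.

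The step I expect to be the main obstacle is Step~1, specifically verifying that $g_k f_k\to f$ weak* with all the measure-theoretic bookkeeping (passing to an a.e.-convergent subsequence of each $f_k^{(i)}$, checking $g_k\to 1$ a.e.\ using that composition with $B^2$ is an $\|.\|_\infty$-contraction, and producing the $L^1$-dominating function to invoke dominated convergence against an arbitrary $u\in L^1$). All of this, however, is already carried out in detail in the proof of Theorem~\ref{theo3.3}, so here it can be invoked essentially word for word; the only simplification is that there is no $\langle\varphi_i\rangle$ part to track. Everything else — Steps~2 and 3 — is a routine application of Theorem~\ref{theo3.2} and Lemma~\ref{lem2.8}, mirroring the final paragraph of the proof of Theorem~\ref{theo3.3}.
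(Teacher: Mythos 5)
Your proposal is correct and follows essentially the same route as the paper: the paper's proof of Theorem~\ref{theo3.4} likewise reduces to showing $\mathcal{M}=\overline{\mathcal{M}}^{\|.\|_2}\cap H^\infty$ by the argument of Theorem~\ref{theo3.3}, applies Theorem~\ref{theo3.2} to $\overline{\mathcal{M}}^{\|.\|_2}$, and identifies $\left(J_1H^2(B)\oplus\cdots\oplus J_rH^2(B)\right)\cap H^\infty$ with $J_1H^\infty(B)\oplus\cdots\oplus J_rH^\infty(B)$ via Lemma~\ref{lem2.8}, exactly as you do. The only cosmetic difference is that the paper leaves these steps as references to earlier arguments while you spell them out.
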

\begin{proof}
Using the arguments similar to those in the proof of Theorem~\ref{theo3.3}, we see that $\mathcal{M} = {\overline{\mathcal{M}}}^{\|.\|_2} \cap H^\infty$. Again, since  ${\overline{\mathcal{M}}}^{\|.\|_2}$ is a $T_B$-invariant subspace of $H^2$, so by Theorem~\ref{theo3.2} we can write:
    \[\overline{\mathcal{M}}^{\|.\|_2} = J_1 H^2(B) \oplus\cdots\oplus J_r H^2(B)\]
for some $B$-inner functions $J_1,\ldots,J_r$ ( $r \le n$).

This implies
\[\mathcal{M} =\overline{\mathcal{M}}^{\|.\|_2} \cap H^\infty = \left ( J_1 H^2(B) \oplus\cdots\oplus J_r H^2(B) \right ) \cap H^\infty. \]
Lastly, we can show that $\left ( J_1 H^2(B) \oplus\cdots\oplus J_r H^2(B) \right ) \cap H^\infty=J_1 H^\infty(B) \oplus\cdots\oplus J_r H^\infty(B)$ on the same lines as in the proof of the decomposition in equation (\ref{eq3.1}).
\end{proof}

For the convenience of the reader, we now state the main results of this paper which generalize the Lax-Halmos type results in \cite{sahni2012lax}. Before proving these, we discuss the road map of the proof briefly.

\begin{thm}\label{theo3.5}
Let $\al$ be a continuous $\|.\|_1$-dominating normalized gauge norm and $\mathcal{M}$ be an $\al$-closed linear subspace of $H^\al$. Suppose $\mathcal{M}$ is invariant under $T_{B^2}$ and $T_{B^3}$, but not under $T_B$. Then, 
\[  \mathcal{M} = \left (\sum_{i=1}^{k} \oplus \langle \varphi_i \rangle \right ) \oplus B^2 \; [(J_1 M_\al(B) \oplus\cdots\oplus J_r M_\al(B))]_\al \]
where $M_\al(B) := [H^\infty(B)]_\al$, $J_1,\ldots,J_r$ ($r \leq n$) are $B$-inner functions and $k \leq 2r-1$. Furthermore, $\varphi_i$ = $(\beta_{1i} + \beta_{2i}B)J_1 + (\beta_{3i} + \beta_{4i}B)J_2  + \ldots+ (\beta_{2r-1,i} + \beta_{2r,i}B)J_r$ for $i=1,\ldots,k$. The complex numbers $\beta_{ji}$ satisfy $\lvert \beta_{1i}\rvert^2 + \lvert\beta_{2i}\rvert^2 +\cdots+ \lvert\beta_{2r,i}\rvert^2 = 1$.\\
Here $[H^\infty(B)]_\al$ stands for the $\al$-closure of $H^\infty(B)$.
\end{thm}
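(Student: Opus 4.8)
The plan is to deduce this from the $H^\infty$ version already proved, Theorem~\ref{theo3.3}, by a density argument, together with the elementary structural identities already used for the decomposition \eqref{eq3.1}. Set $\mathcal{N}:=\mathcal{M}\cap H^\infty$. First I would check that $\mathcal{N}$ is a weak*-closed subspace of $H^\infty$ invariant under $H_1^\infty(B)$: weak*-closedness holds because $\mathcal{L}^{\alpha^\prime}\subseteq L^1$, so a weak*-limit in $H^\infty$ of a sequence from $\mathcal{N}$ is in particular a weak limit in $H^\alpha$ (tested against every bounded linear functional of Proposition~\ref{prop2.1}), and $\mathcal{M}$, being $\alpha$-closed and convex, is weakly closed; invariance under $H_1^\infty(B)$ is inherited from $\mathcal{M}$ and $H^\infty$. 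Moreover $\mathcal{N}$ cannot be invariant under $H^\infty(B)$, since otherwise Theorem~\ref{theo3.4} together with the density statement below would force $\mathcal{M}$ itself to be $T_B$-invariant. Theorem~\ref{theo3.3} then supplies $B$-inner functions $J_1,\ldots,J_r$ and functions $\varphi_1,\ldots,\varphi_k$ of the asserted form with $\mathcal{N}=\big(\sum_{i=1}^{k}\oplus\langle\varphi_i\rangle\big)\oplus\sum_{j=1}^{r}\oplus B^2 J_j H^\infty(B)$.

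Write $\mathcal{M}_0$ for the right-hand side of the theorem. The inclusion $\mathcal{M}_0\subseteq\mathcal{M}$ is the easy half: every $\varphi_i$ and every $B^2 J_j$ lies in $\mathcal{N}\subseteq\mathcal{M}$, multiplication by $B^2$ is an $\alpha$-isometry (since $|B|=1$ a.e.), $H^\infty(B)$ is $\alpha$-dense in $M_\alpha(B)$, $\mathcal{M}$ is $\alpha$-closed, and the finite-dimensional span of the $\varphi_i$ is automatically closed; assembling these gives $\mathcal{M}_0\subseteq\mathcal{M}$. For the reverse inclusion I must show that $\mathcal{N}$ is $\alpha$-dense in $\mathcal{M}$. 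Given $f\in\mathcal{M}\subseteq H^\alpha\subseteq H^1$, I would pass to $\overline{\mathcal{M}}^{\|.\|_1}$, a closed $H_1^\infty(B)$-invariant subspace of $H^1$, and invoke the $H^1$-analogue of Theorem~\ref{theo3.1} (the $H^p$ common-invariant-subspace theorem of \cite{sahni2012lax}, with $p=1$) to write $f=\sum_i a_i\varphi_i+\sum_j B^2 J_j g_j$ with $g_j\in H^1(B)$, using the uniqueness clauses of Theorems~\ref{theo3.1} and~\ref{theo3.2} to align this $B$-inner data with that of the first paragraph. Once it is known that each $g_j$ in fact lies in $M_\alpha(B)$, it follows that $f\in\mathcal{M}_0$, and hence $\mathcal{M}=\mathcal{M}_0$.

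To place the coefficients $g_j$ in $M_\alpha(B)$ I would use Lemma~\ref{lem2.9}: the coordinate-extraction maps attached to the $B$-inner system $\{B^2 J_1,\ldots,B^2 J_r\}$ (and, after the obvious reduction, to the full orthonormal family $\{e_{l0}\}$ refined by Theorem~\ref{theo3.1}) are bounded in every $L^p$-norm, $1\le p\le\infty$. Approximating $f$ in $\alpha$ by functions $q_m\in H^\infty$ and decomposing each $q_m$ via Lemma~\ref{lem2.8} produces sequences in $H^\infty(B)$ converging to the $g_j$'s in $\|.\|_1$; the task is then to upgrade this to $\alpha$-convergence, whereupon — $M_\alpha(B)$ being $\alpha$-closed — the $g_j$ land in $M_\alpha(B)$. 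This upgrade should be carried out with Proposition~\ref{prop2.2}, the continuity of $\alpha$, and the dominated-convergence behaviour of $\alpha$ on $L^\alpha$, in the same spirit as in Lemma~\ref{lem2.4} and in the proof of Theorem~\ref{theo3.3}.

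I expect the genuine difficulty to be exactly this last point: bounding the $\alpha$-norm of the $B$-inner coordinate coefficients by the $\alpha$-norm of the whole function. Lemma~\ref{lem2.9} hands us the corresponding $L^1$ and $L^\infty$ bounds, but a general continuous $\|.\|_1$-dominating normalized gauge norm need not be rearrangement-invariant, so one cannot simply interpolate them; instead one must exploit the concrete nature of the extraction operators — built from multiplication by the fixed bounded functions $e_{l0}$, $J_j$ and from composition with the measure-preserving map $B$ — together with the monotone- and dominated-convergence properties of $\alpha$. A secondary, more clerical obstacle is reconciling the $B$-inner data produced by the $H^\infty$-theorem for $\mathcal{N}$ with that produced by the $H^1$-theorem for $\overline{\mathcal{M}}^{\|.\|_1}$; this is handled by the uniqueness assertions of Theorems~\ref{theo3.1} and~\ref{theo3.2}.
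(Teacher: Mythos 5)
Your skeleton matches the paper's road map — pass to $\mathcal{N}=\mathcal{M}\cap H^\infty$, show it is weak*-closed with the same invariance pattern, apply Theorem~\ref{theo3.3}, and then argue density — but the step you yourself flag as ``the genuine difficulty'' is a real gap, not a deferred technicality. Your route to the reverse inclusion goes through coordinate extraction: write $f=\sum_i a_i\varphi_i+\sum_j B^2J_jg_j$ inside $\overline{\mathcal{M}}^{\|.\|_1}$ and then try to push the coefficients $g_j$ into $M_\al(B)$ via Lemma~\ref{lem2.9}. As you note, Lemma~\ref{lem2.9} only gives $\|\cdot\|_p$ bounds, and a general continuous $\|.\|_1$-dominating normalized gauge norm need not be rearrangement-invariant, so there is no interpolation to fall back on; no argument is actually supplied to upgrade $\|\cdot\|_1$-convergence of the $g_j$'s to $\al$-convergence. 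Tellingly, the theorem as stated keeps the outer $\al$-closure around $J_1M_\al(B)\oplus\cdots\oplus J_rM_\al(B)$ precisely because the authors cannot place the coefficients in $M_\al(B)$ at this level of generality — they only achieve that sharper, closure-free form in Section~4 (Theorem~\ref{theo4.5}), for rotationally symmetric norms, where the unitary matrix identity $A^*A=I$ gives the pointwise bound $|g_i/O|\le|f_1|+\cdots+|f_n|$ needed to control $\al$. So your plan, if completed, would prove more than the theorem claims for general gauge norms, and the missing estimate is exactly the one that is not available there.

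The paper avoids coordinate extraction entirely. For non-triviality of $\mathcal{M}\cap H^\infty$ (a point your proposal does not address at all, though Theorem~\ref{theo3.3} needs it) and for the density of $\mathcal{M}\cap H^\infty$ in $\mathcal{M}$, the argument is: factor $f\in\mathcal{M}\subset H^1$ as $f=I\,O^{1/2}\,O^{1/2}$, a product of three $H^2$ functions; decompose each factor along \eqref{eq2.1} relative to $B^2$; build bounded outer multipliers $q_m=\exp\bigl(-(|s|^{1/2}+i(|s|^{1/2})^{\sim})/m\bigr)$ whose product $t_m(B^2)$ lies in $H^\infty(B^2)$, satisfies $t_m(B^2)f\in H^\infty$, and has $\|t_m(B^2)-1\|_\infty\to 0$, whence $\al(t_m(B^2)f-f)\le\al(f)\,\|t_m(B^2)-1\|_\infty\to0$. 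Since $\mathcal{M}$ is $T_{B^2}$-invariant, $t_m(B^2)f\in\mathcal{M}\cap H^\infty$ and $f\in[\mathcal{M}\cap H^\infty]_\al$. After that, only the elementary identity $[J_1H^\infty(B)\oplus\cdots\oplus J_rH^\infty(B)]_\al=[J_1M_\al(B)\oplus\cdots\oplus J_rM_\al(B)]_\al$ is needed, which follows from $\al(J_ih)\le\|J_i\|_\infty\al(h)$. I recommend you replace your second and third paragraphs with a multiplier argument of this kind; your appeal to the $H^1$ version of Theorem~\ref{theo3.1} and to the uniqueness clauses then becomes unnecessary.
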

\begin{thm}\label{theo3.6}
Suppose $\al$ to be a continuous $\|.\|_1$-dominating normalized gauge norm and $\mathcal{M}$ be an $\al$-closed linear subspace of $H^\al$. Suppose $\mathcal{M}$ is invariant under $T_{B}$. Then, 
\[ \mathcal{M} = [J_1 M_\al(B) \oplus\cdots\oplus J_r M_\al(B)]_\al \] for some $B-$inner functions $J_1,...,J_r$ $(r \leq n)$.
\end{thm}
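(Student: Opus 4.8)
The plan is to run the proof of Theorem~\ref{theo3.4} in the $H^\al$ setting: first cut $\mathcal{M}$ down to its bounded functions, classify those by Theorem~\ref{theo3.4}, and then recover $\mathcal{M}$ as an $\al$-closure. The one genuinely new point is that, lacking an ambient $H^2$ structure inside $H^\al$, the bounded functions of $\mathcal{M}$ must be produced by hand and their convergence controlled directly in the $\al$-norm.

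\emph{First step.} I would set $\mathcal{M}_\infty:=\mathcal{M}\cap H^\infty$, which is plainly a $T_B$-invariant linear subspace of $H^\infty$, and show that it is weak*-closed. By the Krein--Smulian theorem it suffices to check that $\mathcal{M}_\infty$ meets each ball of $H^\infty$ in a weak*-closed set, and since such balls are weak*-metrizable one may work with a sequence $f_j\in\mathcal{M}_\infty$ converging weak* in $H^\infty$ to some $f$. By the standard duality $H^\infty=(L^1/zH^1)^*$ we then have $\int_\T f_j u\,dm\to\int_\T fu\,dm$ for every $u\in L^1$, in particular for every $u\in\mathcal{L}^{\al'}$; by Proposition~\ref{prop2.1} this means $f_j\to f$ weakly in $L^\al$, and since $\mathcal{M}$ is norm-closed and convex in $L^\al$ it is weakly closed, so $f\in\mathcal{M}\cap H^\infty=\mathcal{M}_\infty$. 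Theorem~\ref{theo3.4} then furnishes $B$-inner functions $J_1,\dots,J_r$ with $r\le n$ and
\[
\mathcal{M}\cap H^\infty=J_1H^\infty(B)\oplus\cdots\oplus J_rH^\infty(B).
\]

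\emph{Second step.} I claim $\mathcal{M}=[\mathcal{M}\cap H^\infty]_\al$; only the inclusion $\subseteq$ requires an argument. Given $f\in\mathcal{M}\subset H^\al\subset H^1$, the $H^1$-form of the decomposition~(\ref{eq3.1}) --- valid because composition with the finite Blaschke product $B$ is an isometry of $H^1$ --- lets me write $f=\sum_{i=0}^{n-1}e_{i0}\,(h_i\circ B)$ with $h_i\in H^1$; put $H:=\sum_{i=0}^{n-1}|h_i|\in L^1(\T)$. For $k\ge1$ let $\phi_k$ be the outer function with boundary modulus $\min(1,k/H)$, which is legitimate since $|\log|\phi_k||=(\log H-\log k)^+\le\log^+H\le H\in L^1(\T)$; then $\|\phi_k\|_\infty\le1$ and $|\phi_kh_i|\le k$, so $\phi_kh_i\in H^\infty$ for each $i$. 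Hence $\psi_k:=\phi_k\circ B\in H^\infty(B)$ and $\psi_kf=\sum_i e_{i0}\,(\phi_kh_i)\circ B\in H^\infty$, while Lemma~\ref{lem2.5} makes $\mathcal{M}$ invariant under $H^\infty(B)$, so $\psi_kf\in\mathcal{M}_\infty$. Since $|\phi_k|\to1$ a.e.\ on $\T$ (as $H<\infty$ a.e.) and $\log|\phi_k|\to0$ in $L^1(\T)$ (dominated convergence, dominating function $\log^+H$), one gets $\|\phi_k\|_2^2\to1$ and $\phi_k(0)=\exp\!\big(\int_\T\log|\phi_k|\,dm\big)\to1$, hence
\[
\|\phi_k-1\|_2^2=\|\phi_k\|_2^2-2\,\phi_k(0)+1\longrightarrow0,
\]
so $\phi_k\to1$ a.e.\ along a subsequence, and then $\psi_k=\phi_k\circ B\to1$ a.e.\ along that subsequence because $B$ preserves Lebesgue measure on $\T$. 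Now $(\psi_k-1)f\to0$ a.e.\ with $|(\psi_k-1)f|\le2|f|\in L^\al$, and a dominated-convergence principle for $L^\al$ --- obtained from Proposition~\ref{prop2.2}, the continuity of $\al$, and Egorov's theorem --- yields $\al(\psi_kf-f)\to0$. Thus $f\in[\mathcal{M}_\infty]_\al$, so $\mathcal{M}=[\mathcal{M}_\infty]_\al$.

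\emph{Conclusion.} Combining the two steps gives $\mathcal{M}=[J_1H^\infty(B)\oplus\cdots\oplus J_rH^\infty(B)]_\al$. Since each $J_i\in H^\infty$ and $\al(J_ig)\le\|J_i\|_\infty\,\al(g)$, the $\al$-closure of $J_iH^\infty(B)$ already contains $J_i[H^\infty(B)]_\al=J_iM_\al(B)$, and conversely $J_iH^\infty(B)\subseteq J_iM_\al(B)$; taking finite sums and $\al$-closures I would conclude
\[
\mathcal{M}=[J_1M_\al(B)\oplus\cdots\oplus J_rM_\al(B)]_\al,
\]
the internal sum being direct because the corresponding sum $\sum_iJ_iH^1(B)$ is direct in $H^1$ by the results recalled in Section~\ref{sec2}. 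The step I expect to be the real obstacle is the second one: manufacturing truncating multipliers $\psi_k\in H^\infty(B)$ that keep $\psi_kf$ inside $\mathcal{M}$, and then upgrading their pointwise convergence to $\al$-convergence using only the axioms of a continuous $\|.\|_1$-dominating gauge norm; once Theorems~\ref{theo3.2} and~\ref{theo3.4} are in hand, everything else is bookkeeping.
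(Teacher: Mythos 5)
Your proposal is correct and follows the same overall skeleton as the paper's argument: restrict to $\mathcal{M}\cap H^\infty$, show it is weak*-closed via Krein--Smulian and the duality $(L^\al)^\#=\mathcal{L}^{\al'}$, classify it by Theorem~\ref{theo3.4}, prove $\al$-density of $\mathcal{M}\cap H^\infty$ in $\mathcal{M}$ with outer multipliers from $H^\infty(B)$, and finally identify $[J_iH^\infty(B)]_\al$ with $[J_iM_\al(B)]_\al$ using $\al(J_ig)\le\|J_i\|_\infty\al(g)$. Where you genuinely diverge is in the density step (the paper's Lemma~\ref{lem3.11}). The paper factors $f=IO^{1/2}O^{1/2}$ into three $H^2$ functions, decomposes each via the orthonormal basis $\{e_{i0}\}$, and uses products of exponential outer functions $\exp\big(-(|s_i^{(k)}|^{1/2}+i(|s_i^{(k)}|^{1/2})^\sim)/m\big)$; you instead apply the $H^1$-version of the decomposition directly to $f$ and use a single truncation-type outer function with boundary modulus $\min(1,k/H)$. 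More importantly, the two arguments close differently: the paper passes from $t_m(B)\to1$ a.e.\ to $\|t_m(B)-1\|_\infty\to0$ and then uses $\al(f(t_m(B)-1))\le\al(f)\|t_m(B)-1\|_\infty$, whereas you use Egorov together with the continuity of $\al$ and the absolute continuity $\al(|f|\chi_E)\to0$ as $m(E)\to0$. Your route is the more robust one at this point: a.e.\ convergence of the multipliers does not in general yield uniform convergence (the multipliers are genuinely small wherever the dominating data is large), so a dominated-convergence-in-$\al$ argument of the kind you describe is really what is needed. The only caveat to record is that this principle requires the dominating function to lie in $L^\al$ (so that it can be $\al$-approximated by bounded functions), not merely in $\mathcal{L}^\al$; since your dominating function is $2|f|$ with $f\in\mathcal{M}\subset L^\al$, this is satisfied. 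Two cosmetic points: non-triviality of $\mathcal{M}\cap H^\infty$ (needed before invoking Theorem~\ref{theo3.4} with $r\ge1$) is a byproduct of your second step, so the two steps should be taken in the opposite order; and the directness of $\sum_iJ_iH^1(B)$ is most cleanly seen from the relation $A^*A=I$ for the $B$-inner matrix rather than quoted from Section~\ref{sec2}.
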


The proof of Theorem~\ref{theo3.5} comprises of the following key steps: First, we identify $\mathcal{M}\cap H^\infty$ to be a non-trivial weak*-closed subspace of $H^\infty$ which has the same invariance properties as that of $\mathcal{M}$. Next, we use Theorems~\ref{theo3.3} and \ref{theo3.4} to describe $\mathcal{M}\cap H^\infty$. Lastly, we finish off the proof by establishing that $\mathcal{M}\cap H^\infty$ is $\al$-dense in $M$. It is worth pointing out that our arguments to show the $\al$-denseness of $\mathcal{M}\cap H^\infty$ do not rely on any factorization of $L^\alpha$ functions as is the case in \cite{chen2017general}. Instead, we work with the classical inner-outer factorization of $H^2$ functions.

\par
The following two lemmas will be needed to prove that $\mathcal{M} \cap H^\infty$ is a weak*-closed subspace of $H^\infty$ (Lemma \ref{lem3.9}). 

\begin{lem}[Krein-Smulian Theorem, \cite{conway2019course}]\label{lem3.7}
Let $X$ be a Banach space. A convex set in $X^{\#}$ is weak*-closed if and only if its intersection with the closed unit ball $\{ \phi \in X^{\#} : \|\phi\| \leq 1 \}$ of $X^{\#}$ is weak*-closed. \\
\end{lem}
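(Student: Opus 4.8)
The forward implication is immediate and uses no convexity: by the Banach--Alaoglu theorem each ball $rB^{\#}:=\{\phi\in X^{\#}:\|\phi\|\le r\}$ is weak*-compact, hence weak*-closed, so the intersection of any weak*-closed set with $rB^{\#}$ is again weak*-closed. (We read the statement in its standard strong form, that the intersection with \emph{every} ball $rB^{\#}$, $r>0$, is weak*-closed; when the convex set is a linear subspace --- which is the case relevant to the present paper --- this is equivalent to the unit-ball condition, because $\phi\mapsto r\phi$ is a weak*-homeomorphism carrying $C\cap B^{\#}$ onto $C\cap rB^{\#}$.)

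For the converse the plan is the classical Banach--Dieudonn\'e argument. Let $C\subseteq X^{\#}$ be convex with $C\cap rB^{\#}$ weak*-closed for all $r>0$, and fix $\phi_{0}\notin C$. I would first replace $C$ by $C-\phi_{0}$, a substitution under which the hypothesis persists since $(C-\phi_{0})\cap rB^{\#}=\bigl(C\cap(rB^{\#}+\phi_{0})\bigr)-\phi_{0}$ and $C\cap(rB^{\#}+\phi_{0})=(C\cap mB^{\#})\cap(rB^{\#}+\phi_{0})$ with $m=r+\|\phi_{0}\|$ is an intersection of two weak*-closed sets; so we may assume $0\notin C$. For $F\subseteq X$ write $F^{\circ}=\{\phi\in X^{\#}:|\phi(x)|\le1\ \text{for all }x\in F\}$. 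The aim is to produce a \emph{norm-compact} set $K\subseteq X$ with $C\cap K^{\circ}=\emptyset$: then $K^{\circ}$ is a neighbourhood of $0$ in the locally convex topology $\tau_{c}$ on $X^{\#}$ of uniform convergence on norm-compact subsets of $X$ (such polars forming a base of neighbourhoods of $0$), and doing this around every point of $X^{\#}\setminus C$ shows that $C$ is $\tau_{c}$-closed. Since a $\tau_{c}$-continuous functional is bounded by $1$ on some $K^{\circ}$ and therefore lies in the bipolar $(K^{\circ})^{\circ}$, which equals the closed absolutely convex hull of the norm-compact set $K$ and hence sits inside $X$, the topologies $\tau_{c}$ and $w^{*}$ on $X^{\#}$ have the same dual $X$; a convex set closed in one is then closed in the other (both coincide with the intersection of the closed real half-spaces containing it), so $C$ is weak*-closed.

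The set $K$ would be assembled from a single vector together with a norm-null family: choose inductively a point $y_{0}\in X$ and finite sets $F_{n}\subseteq\{x\in X:\|x\|\le1/n\}$ $(n\ge1)$ so that, with $G_{n}:=\{y_{0}\}\cup F_{1}\cup\cdots\cup F_{n}$,
$$C\cap G_{n}^{\circ}\cap(n+1)B^{\#}=\emptyset\qquad\text{for every }n\ge0 .$$
Then $K:=\{y_{0}\}\cup\bigcup_{n\ge1}F_{n}\cup\{0\}$ is norm-compact (each $F_{n}$ is finite and their elements shrink to $0$), $K^{\circ}\subseteq G_{n}^{\circ}$ for all $n$, and since every $\phi$ lies in some $(n+1)B^{\#}$ this forces $C\cap K^{\circ}=\emptyset$. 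For the base step $C\cap B^{\#}$ is weak*-compact, convex and avoids $0$, so Hahn--Banach separation in the weak* topology yields $y_{0}\in X$ with $\operatorname{Re}\phi(y_{0})>1$, hence $|\phi(y_{0})|>1$, for all $\phi\in C\cap B^{\#}$ (take $y_{0}=0$ if $C\cap B^{\#}=\emptyset$). For the inductive step, $D_{n}:=C\cap G_{n-1}^{\circ}\cap(n+1)B^{\#}$ is weak*-compact and, by the stage-$(n-1)$ equality, disjoint from $nB^{\#}$; the auxiliary fact that \emph{a weak*-compact set disjoint from $\tfrac1\varepsilon B^{\#}$ is disjoint from $F^{\circ}$ for some finite $F\subseteq\varepsilon B_{X}$} --- proved by a finite-intersection argument on the weak*-compact set, using $\bigcap\{F^{\circ}:F\subseteq\varepsilon B_{X}\ \text{finite}\}=(\varepsilon B_{X})^{\circ}=\tfrac1\varepsilon B^{\#}$ --- applied with $\varepsilon=1/n$ produces the required $F_{n}$.

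The crux, and the reason one cannot simply intersect the Hahn--Banach separators of $0$ from the various $C\cap rB^{\#}$, is exactly this inductive step: those separators need not be small in norm, whereas norm-compactness of $K$ forces all but finitely many of the chosen vectors to be arbitrarily small. The device is to pay for a possibly large vector only once, as $y_{0}$, after which the part of $C$ not yet excluded has been pushed into the shell $\{\|\phi\|>n\}$ and so can be separated from the \emph{bounded} set under consideration by finitely many vectors of norm $\le1/n$ via the compactness lemma above. With convexity invoked just once at the very end --- to pass from $\tau_{c}$-closedness to weak*-closedness --- this would complete the proof.
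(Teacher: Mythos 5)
Your proof is correct. The paper itself offers no argument for this lemma --- it is quoted from Conway's text as a known classical result --- so there is nothing to compare step by step; what you have supplied is a complete, self-contained proof along the standard Banach--Dieudonn\'e lines: translate so that the point to be separated is $0$, build a norm-compact set $K=\{y_0\}\cup\bigcup_n F_n\cup\{0\}$ whose polar misses $C$ by an induction that pays for one large separating vector $y_0$ up front and thereafter uses the finite-intersection/compactness lemma to separate the weak*-compact slices $C\cap G_{n-1}^{\circ}\cap(n+1)B^{\#}$ from $nB^{\#}$ with finitely many vectors of norm at most $1/n$, and finally identify the dual of the topology of uniform convergence on norm-compact sets with $X$ to convert $\tau_c$-closedness of the convex set into weak*-closedness. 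Each of these steps checks out (the base case, the persistence of the hypothesis under translation, the compactness of $K$, and the bipolar argument showing $(X^{\#},\tau_c)^{*}=X$). You are also right to flag that, as literally stated for an arbitrary convex set, the lemma should require the intersection with \emph{every} ball $rB^{\#}$ to be weak*-closed; the unit-ball formulation in the paper is only equivalent to this for linear subspaces, via the weak*-homeomorphism $\phi\mapsto r\phi$. That is exactly the situation in which the lemma is invoked (Lemma~\ref{lem3.9}, where $\mathcal{M}\cap H^\infty$ is a subspace), so the paper's use of the result is unaffected, but your reading in the ``strong form'' is the honest one.
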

\begin{lem}[Chen~\cite{chen2017general}]\label{lem3.8}
Let $\al$ be a continuous $\|.\|_1$-dominating normalized gauge norm. Let $\mathbb{B}(L^\infty) = \{ f \in L^\infty : \|f\|_\infty \leq 1 \}$ denote the closed unit ball on $L^\infty$. Then\\
$(1)$ The $\al$-topology and the $\|.\|_2$-topology coincide on $\mathbb{B}(L^\infty)$.\\
$(2)$ The closed unit ball $\mathbb{B}(L^\infty)$ is $\al$-closed.
\end{lem}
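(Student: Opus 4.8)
The plan is to handle the two parts essentially separately, with part~(2) being a short consequence of the domination $\|.\|_1\le\al$. A convenient preliminary observation I would record first is that $\al$ is \emph{monotone} on nonnegative $L^\infty$ functions: if $0\le g\le h$ in $L^\infty$, then $\al(g)\le\al(h)$. This is immediate from the inequality $\al(fg)\le\|f\|_\infty\,\al(g)$ recorded in Section~\ref{sec2}; indeed, setting $u=g/h$ on $\{h>0\}$ and $u=0$ on $\{h=0\}$ (where $g=0$ as well, since $0\le g\le h$) gives $u\in L^\infty$ with $\|u\|_\infty\le 1$ and $g=uh$, so $\al(g)=\al(uh)\le\|u\|_\infty\al(h)\le\al(h)$.

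For part~(1), since $\al$ and $\|.\|_2$ are norms, their restrictions to $\mathbb{B}(L^\infty)$ induce metric topologies, and two metrizable topologies on a set agree as soon as they have the same convergent sequences together with their limits. So it suffices to show, for $\{f_n\}\subset\mathbb{B}(L^\infty)$ and $f\in\mathbb{B}(L^\infty)$, that $\al(f_n-f)\to 0$ if and only if $\|f_n-f\|_2\to 0$. Writing $g_n=f_n-f$, so $\|g_n\|_\infty\le 2$, the interpolation estimate $\|g_n\|_2^2\le\|g_n\|_\infty\|g_n\|_1\le 2\|g_n\|_1\le 2\al(g_n)$ shows that $\al$-convergence forces $\|.\|_2$-convergence. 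Conversely, assuming $\|g_n\|_2\to 0$ and fixing $\varepsilon>0$, I would set $E_n=\{\,|g_n|>\varepsilon\,\}$; Chebyshev's inequality gives $m(E_n)\le\varepsilon^{-2}\|g_n\|_2^2\to 0$, while pointwise $0\le|g_n|\le\varepsilon\cdot 1+2\chi_{E_n}$. Monotonicity, subadditivity, and $\al(1)=1$ then give $\al(g_n)=\al(|g_n|)\le\varepsilon+2\al(\chi_{E_n})$, and the continuity hypothesis on $\al$ forces $\al(\chi_{E_n})\to 0$; hence $\limsup_n\al(g_n)\le\varepsilon$, and letting $\varepsilon\downarrow 0$ finishes this direction.

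For part~(2), suppose $\{f_n\}\subset\mathbb{B}(L^\infty)$ with $\al(f_n-f)\to 0$ for some $f$. Since $\|.\|_1\le\al$, also $\|f_n-f\|_1\to 0$, so a subsequence satisfies $f_{n_k}\to f$ a.e.\ on $\T$. Because $|f_{n_k}|\le 1$ a.e., the a.e.\ limit obeys $|f|\le 1$ a.e., i.e.\ $f\in\mathbb{B}(L^\infty)$; thus $\mathbb{B}(L^\infty)$ is $\al$-closed.

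The one genuinely delicate point is the implication $\|.\|_2\Rightarrow\al$ in part~(1): an $L^2$-small function may be $\al$-large, so the argument must exploit \emph{both} the uniform bound (to isolate the part of $g_n$ supported on a set of small measure) and the continuity of $\al$ (to kill $\al(\chi_{E_n})$). The remaining ingredients — the interpolation bound among $L^\infty$, $L^2$, $L^1$, Chebyshev's inequality, monotonicity of $\al$, and the passage to an a.e.-convergent subsequence — are routine.
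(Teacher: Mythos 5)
Your proof is correct. Note that the paper itself states Lemma~\ref{lem3.8} without proof, quoting it from Chen~\cite{chen2017general}, so there is no in-paper argument to compare against; your reasoning --- the interpolation bound $\|g\|_2^2\le\|g\|_\infty\|g\|_1\le 2\al(g)$ for one direction, and the splitting $|g_n|\le\varepsilon+2\chi_{E_n}$ with Chebyshev plus the continuity hypothesis $\lim_{m(E)\to 0^+}\al(\chi_E)=0$ for the other, together with the $\|.\|_1\le\al$ domination and a.e.\ subsequence extraction for closedness --- is the standard route and matches the original source in substance. The preliminary monotonicity observation via $g=uh$ with $\|u\|_\infty\le 1$ is a clean way to justify the step $\al(|g_n|)\le\al(\varepsilon+2\chi_{E_n})$.
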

 
\begin{lem}\label{lem3.9}
Suppose $\al$ is a continuous $\|.\|_1$-dominating normalized gauge norm and $\mathcal{M}$ be a closed subspace of $H^\al$. Then $ \mathcal{M} \cap H^\infty $ is weak*-closed in $H^\infty$.
\end{lem}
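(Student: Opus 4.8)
The plan is to apply the Krein--Smulian theorem (Lemma~\ref{lem3.7}) with $X = L^1$, so that $X^{\#} = L^\infty$ with its weak* topology. Since $H^\infty$ is already weak*-closed in $L^\infty$, and $\mathcal{M} \cap H^\infty$ is manifestly a convex (indeed linear) subset of $L^\infty$, it suffices by Lemma~\ref{lem3.7} to show that $(\mathcal{M} \cap H^\infty) \cap \mathbb{B}(L^\infty)$ is weak*-closed in $L^\infty$, where $\mathbb{B}(L^\infty)$ is the closed unit ball. Equivalently, writing $\mathbb{B}(\mathcal{M}) := \mathcal{M} \cap \mathbb{B}(L^\infty) = \{ f \in \mathcal{M} : \|f\|_\infty \le 1\}$, I must show $\mathbb{B}(\mathcal{M})$ (which equals $(\mathcal{M}\cap H^\infty)\cap\mathbb{B}(L^\infty)$ since $\mathcal{M}\subset H^\al \subset H^1$ and bounded functions in $H^1$ lie in $H^\infty$) is weak*-closed.

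First I would take a net $\{f_\lambda\} \subset \mathbb{B}(\mathcal{M})$ converging weak* to some $f \in L^\infty$. Since $\mathbb{B}(L^\infty)$ is weak*-closed, $f \in \mathbb{B}(L^\infty)$, i.e. $\|f\|_\infty \le 1$; moreover weak* convergence in $L^\infty$ against the $L^1$ functions $z^{-n}$ ($n < 0$) preserves vanishing of negative Fourier coefficients, so $f \in H^\infty$. The remaining point is that $f \in \mathcal{M}$. Here is where Lemma~\ref{lem3.8} enters: on the ball $\mathbb{B}(L^\infty)$ the $\al$-topology and the $\|.\|_2$-topology coincide, and the $\|.\|_2$-topology is metrizable, so I can reduce to sequences. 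The standard way to do this: the weak* topology on $\mathbb{B}(L^\infty)$ is also metrizable (as $L^1$ is separable), and on that ball a weak*-convergent net has the property that its $\|.\|_2$-limit, if it exists, must coincide with the weak* limit. Concretely, I would argue that $f_\lambda \to f$ weak* in $L^\infty$ with all norms bounded by $1$ forces $f_\lambda \to f$ in $\|.\|_2$: indeed $\|f_\lambda - f\|_2^2 = \langle f_\lambda - f, f_\lambda - f\rangle$ and, expanding, the cross terms $\int (f_\lambda - f)\overline{f}\, dm \to 0$ by weak* convergence against $\overline f \in L^1$, while $\int |f_\lambda|^2 dm = \int f_\lambda \overline{f_\lambda}\,dm$ need not converge directly — so instead I would pass to the metrizable ball, extract from any subnet a further weak*-convergent (hence, by the above partial computation together with lower semicontinuity of the $L^2$ norm, $\|.\|_2$-convergent) subnet, and conclude by a subnet argument that $f_\lambda \to f$ in $\|.\|_2$, hence in $\al$ by Lemma~\ref{lem3.8}(1). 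Since $\mathcal{M}$ is $\al$-closed and each $f_\lambda \in \mathcal{M}$, we get $f \in \mathcal{M}$, so $f \in \mathbb{B}(\mathcal{M})$.

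I expect the main obstacle to be the honest verification that weak*-convergence on the bounded set forces $\|.\|_2$-convergence to the same limit — the naive expansion of $\|f_\lambda - f\|_2^2$ leaves the term $\|f_\lambda\|_2^2$ uncontrolled, and one genuinely needs the metrizability of $\mathbb{B}(L^\infty)$ in the weak* topology (separability of $L^1$) together with a subnet/diagonal argument, plus weak lower semicontinuity of $\|\cdot\|_2$, to pin down that the $L^2$-cluster point is $f$. Once that is in hand, Lemma~\ref{lem3.8}(1) transfers $\|.\|_2$-convergence to $\al$-convergence, $\al$-closedness of $\mathcal{M}$ finishes the inclusion $f\in\mathcal{M}$, and Krein--Smulian (Lemma~\ref{lem3.7}) upgrades ``$(\mathcal{M}\cap H^\infty)\cap\mathbb{B}(L^\infty)$ weak*-closed'' to ``$\mathcal{M}\cap H^\infty$ weak*-closed in $H^\infty$,'' completing the proof.
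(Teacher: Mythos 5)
Your overall architecture (Krein--Smulian, reduce to the unit ball, invoke Lemma~\ref{lem3.8}) matches the paper's, but the pivotal step is wrong: a net in $\mathbb{B}(L^\infty)$ that converges weak* does \emph{not} in general converge in the $\|.\|_2$ norm, and no amount of metrizability, subnet extraction, or lower semicontinuity will produce norm convergence, because the statement is simply false. Take $f_k = z^k$: these lie in $\mathbb{B}(L^\infty)\cap H^\infty$ (and in $\mathcal{M}\cap\mathbb{B}(L^\infty)$ whenever, say, $\mathcal{M}=H^\al$), they converge weak* to $0$ by the Riemann--Lebesgue lemma, yet $\|f_k\|_2 = 1$ for every $k$. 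Your own expansion already exposes the problem --- the term $\|f_\lambda\|_2^2$ is genuinely uncontrolled --- and weak lower semicontinuity only yields $\|f\|_2 \le \liminf \|f_\lambda\|_2$, which is the wrong inequality for upgrading weak to norm convergence in a Hilbert space. So the chain ``weak* convergence $\Rightarrow$ $\|.\|_2$-convergence $\Rightarrow$ $\al$-convergence $\Rightarrow$ $f\in\mathcal{M}$'' breaks at the first arrow.

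The repair, which is what the paper does, is to never ask for norm convergence of the net. First use Lemma~\ref{lem3.8} to conclude that the \emph{set} $\mathcal{M}\cap\mathbb{B}(L^\infty)$ is $\|.\|_2$-closed: it is $\al$-closed (the intersection of the $\al$-closed $\mathcal{M}$ with the $\al$-closed ball), and on $\mathbb{B}(L^\infty)$ the $\al$- and $\|.\|_2$-topologies coincide. Being a norm-closed convex subset of $L^2$, it is then weakly closed in $L^2$ by Mazur's theorem. Now if $f_\lambda \to f$ weak* in $L^\infty$, then $\int_\T (f_\lambda - f)h\,dm \to 0$ for every $h\in L^1$, in particular for every $h\in L^2$, i.e.\ $f_\lambda \to f$ \emph{weakly} in $L^2$; weak closedness gives $f\in\mathcal{M}\cap\mathbb{B}(L^\infty)$ directly, with no need to locate an $L^2$-norm limit. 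The rest of your proposal (the identification of $(\mathcal{M}\cap H^\infty)\cap\mathbb{B}(L^\infty)$ with $\mathcal{M}\cap\mathbb{B}(L^\infty)$, and the Krein--Smulian reduction) is fine as written.
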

 \begin{proof}
Let $\mathbb{B} = \{ \phi \in (L^1)^{\#} : \|\phi\| \leq 1 \}$ be the closed unit ball in $(L^1)^{\#}$. Since $(L^1)^{\#}$ can be identified with $L^\infty$, so $\mathbb{B}$ can be identified with $\mathbb{B}(L^\infty) = \{ f \in L^\infty(\T) : \|f\|_\infty \leq 1\}$.\\
We shall thus consider $\mathcal{M} \cap H^\infty \cap \mathbb{B}(L^\infty)$. Note that  $\mathcal{M} \cap H^\infty \cap \mathbb{B}(L^\infty)$ simplifies to $\mathcal{M} \cap \mathbb{B}(L^\infty)$. This follows by observing that any $f\in \mathcal{M} \cap \mathbb{B}(L^\infty)$ also belongs to $H^1\cap L^\infty=H^\infty$.\\
Thus, we now only need to prove that $ \mathcal{M} \cap \mathbb{B}(L^\infty)$ is weak*-closed in $L^\infty$.\\
By Lemma~\ref{lem3.8}, we see  that $\mathcal{M} \cap \mathbb{B}(L^\infty)$ is closed in $L^\alpha$. This in turn implies that $ \mathcal{M} \cap \mathbb{B}(L^\infty)$ is closed in the $L^2$ norm.\\
Let $\{ f_\lambda \}$ be a net in $\mathcal{M} \cap \mathbb{B}(L^\infty)$ such that $ f_\lambda$ converges weak* to $f$. This means that for every $ h \in L^1$, $\int_{\T} (f_\lambda - f) h dm $ $\to$ 0 . In particular, $\int_{\T} (f_\lambda - f) h dm $ $\to$ 0, for every $ h \in L^2$. This implies $ f_\lambda$ $\to$ $f$ weakly in $L^2$. Since a closed convex set in a Banach space is weakly closed, so $\mathcal{M} \cap \mathbb{B}(L^\infty)$ is also weakly closed in $L^2$. Hence $f \in \mathcal{M} \cap \mathbb{B}(L^\infty)$.
\end{proof}

\begin{rem}
Note that the collection $\mathcal{M} \cap \mathbb{B}(L^\infty) \neq {[0]}$ whenever $\mathcal{M} \cap H^\infty \neq {[0]}$. In case $\mathcal{M}$ is invariant under any power of $B$, then we can guarantee that $\mathcal{M} \cap H^\infty \neq {[0]}$.
\end{rem}

\begin{lem}\label{lem3.11}
Let $\al$ be a continuous $\|.\|_1$-dominating normalized gauge norm and $\mathcal{M}$ be a closed subspace of $H^\al$ which is invariant under $T_B$. Then, $\mathcal{M} \cap H^\infty$ is $\al$-dense in $\mathcal{M}$.
\end{lem}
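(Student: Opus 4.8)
The plan is to fix an arbitrary $f\in\mathcal{M}$ and manufacture a sequence in $\mathcal{M}\cap H^\infty$ converging to $f$; it will be enough to get convergence weakly in $H^\al$. The only structure available is that $\mathcal{M}$ is invariant under $T_B$, equivalently under $H^\infty(B)$ (Lemma~\ref{lem2.5})---crucially, \emph{not} under all of $H^\infty$. So any ``truncating multiplier'' applied to $f$ to make it bounded must itself lie in $H^\infty(B)$, i.e.\ be of the form $\psi\circ B$ with $\psi\in H^\infty$; since such a function is constant on the fibres of $B$, it cannot by itself dominate a general $f\in H^1$ pointwise. The first step, therefore, is to unfold $f$ along the decomposition $H^1=e_{00}H^1(B)\oplus\cdots\oplus e_{n-1,0}H^1(B)$: write $f=\sum_{i=0}^{n-1}e_{i0}\,(h_i\circ B)$ with $h_i\in H^1$ (recall $H^1(B)=\{h\circ B:h\in H^1\}$, using $B(0)=0$).

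Next I would truncate all the components simultaneously. Each $e_{i0}$ is bounded on $\T$, say $|e_{i0}|\le C_0$, so $|f|\le C_0\sum_i|h_i\circ B|\le C_0\,(W\circ B)$ a.e.\ on $\T$, where $W:=1+\sum_{i=0}^{n-1}|h_i|\ge1$; note $0\le\log W\le\sum_i|h_i|\in L^1(\T)$, so $\log W\in L^1(\T)$. For each integer $j\ge1$ let $\psi_j\in H^\infty$ be the outer function with boundary modulus $|\psi_j|=\min(1,j/W)$ (it exists because $\log W\in L^1$). Then $\|\psi_j\|_\infty\le1$ and
\[
\bigl|(\psi_j\circ B)\,f\bigr|\ \le\ C_0\,\bigl(W\cdot\min(1,j/W)\bigr)\circ B\ =\ C_0\,\bigl(\min(W,j)\bigr)\circ B\ \le\ C_0\,j
\]
a.e.\ on $\T$, so $g_j:=(\psi_j\circ B)\,f\in H^1\cap L^\infty=H^\infty$; and since $\psi_j\circ B\in H^\infty(B)$ and $\mathcal{M}$ is $H^\infty(B)$-invariant, $g_j\in\mathcal{M}\cap H^\infty$.

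It then remains to check $g_j\to f$. Writing $\psi_j=\exp(-v_j-i\widetilde{v}_j)$ on $\T$ with $v_j:=(\log W-\log j)^{+}$ and $\widetilde{v}_j$ its harmonic conjugate, one has $v_j\to0$ a.e.\ and $0\le v_j\le\log W\in L^1$, so $v_j\to0$ in $L^1(\T)$; Kolmogorov's weak-type $(1,1)$ estimate for the conjugation operator then gives $\widetilde{v}_j\to0$ in measure, and passing to a subsequence $(j_k)$, $\widetilde{v}_{j_k}\to0$ a.e., hence $\psi_{j_k}\to1$ a.e.\ on $\T$. Since $B(0)=0$, composition with the inner function $B$ preserves normalized Lebesgue measure on $\T$, so $\psi_{j_k}\circ B\to1$ a.e.\ as well. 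Finally, for each $u\in\mathcal{L}^{\al^{\prime}}$ one has $|g_{j_k}u|\le|fu|\in L^1$ (Lemma~\ref{lem2.4}) and $g_{j_k}u\to fu$ a.e., so $\int_\T g_{j_k}u\,dm\to\int_\T fu\,dm$ by classical dominated convergence; by Proposition~\ref{prop2.1} and Hahn--Banach this says $g_{j_k}\to f$ weakly in $H^\al$. As $\mathcal{M}\cap H^\infty$ is a linear subspace of $H^\al$, its weak closure equals its $\al$-closure (Mazur), so $f\in\overline{\mathcal{M}\cap H^\infty}^{\,\al}$; since $f$ was arbitrary and $\mathcal{M}$ is $\al$-closed, $\overline{\mathcal{M}\cap H^\infty}^{\,\al}=\mathcal{M}$.

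The hard part, conceptually, is the point flagged above: because $\mathcal{M}$ need only be $H^\infty(B)$-invariant, the multiplier that tames $f$ must factor through $B$, which is exactly why one has to detour through the $H^1(B)$-decomposition and truncate all the pieces $h_i$ at once by a single outer function built on the image circle. The one genuinely technical step is upgrading the $L^1$-smallness of $\log|\psi_j|$ to the a.e.\ convergence $\psi_j\to1$; this is where Kolmogorov's conjugate-function estimate, together with the passage to a subsequence, seems unavoidable. One could instead skip the weak-convergence/Mazur step and obtain $\al(g_{j_k}-f)\to0$ directly from a dominated convergence theorem for the norm $\al$, which itself follows from Egorov's theorem and the continuity of $\al$.
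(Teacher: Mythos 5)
Your proof is correct. It shares the paper's overall strategy---decompose $f$ along the $e_{i0}H^p(B)$ direct sum, damp it by an outer function composed with $B$ so that the product becomes bounded, invoke $H^\infty(B)$-invariance (Lemma~\ref{lem2.5}) to keep the damped functions in $\mathcal{M}\cap H^\infty$, and pass to the limit---but the execution differs in three respects. (i) The paper first writes $f=I\,O^{1/2}\,O^{1/2}$ as a product of three $H^2$ functions and decomposes each factor via the orthogonal decomposition of $H^2$; you apply the $H^1$ version of the decomposition to $f$ directly, which is legitimate (the decomposition is stated for all $1\le p\le\infty$) and removes a layer of bookkeeping. (ii) Your damping functions are hard truncations with boundary modulus $\min(1,j/W)$ rather than the exponential dampings $\exp(-\lvert s_i^{(k)}\rvert^{1/2}/m)$; the cost is that $\log\lvert\psi_j\rvert$ changes with $j$, so $\psi_j\to1$ a.e.\ is not automatic and you resort to Kolmogorov's weak-type bound plus a subsequence. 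A cheaper route to the same end: $\psi_j(0)=\exp(-\int_\T v_j\,dm)\to1$ and $\|\psi_j\|_2^2=\int_\T e^{-2v_j}\,dm\to1$ give $\|\psi_j-1\|_2\to0$, hence an a.e.\ convergent subsequence, with no conjugate-function estimate needed. (iii) Most significantly, your endgame (weak convergence against $\mathcal{L}^{\al^{\prime}}$ via Lemma~\ref{lem2.4} and Proposition~\ref{prop2.1}, followed by Mazur) is more robust than the paper's, which deduces $\|t_m(B(z))-1\|_\infty\to0$ from a.e.\ convergence; that inference requires the functions $s_i^{(k)}$ to be essentially bounded and fails for general $f\in H^\al$, since otherwise $\operatorname{ess\,inf}\lvert t_m(B(z))\rvert=0$ for every $m$ and $\|t_m(B(z))-1\|_\infty\ge1$. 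Your weak-convergence argument---or the dominated-convergence-for-$\al$ variant you sketch at the end---is what actually closes this step, so your route is not only different but preferable at that point.
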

\begin{proof}
Since $\mathcal{M}$ is an $\al$-closed subspace of $H^\al$, it follows that $[{\mathcal{M} \cap H^\infty}]_\al$ $\subset$ $\mathcal{M}$.

Conversely, any $f \in \mathcal{M} \subset H^\al \subset H^1$ can be factorized as $f=IO$, where $I$ is an inner function and $O$ is an outer function. \\
We can further write $f$ as a product of three $H^2$ functions: 
\begin{center}
    $ f $ = $ I$ $ O^{\frac{1}{2}}$ $ O^{\frac{1}{2}}.$
\end{center}
For the sake of convenience, put $f_1=I$, $f_2=f_3=O^{\frac{1}{2}}$.\\
Using the decomposition of $H^2$ in equation (\ref{eq2.1}), we can express each $f_k$ as
\begin{equation}\label{eq3.8}
f_k = e_{00} g_1^{(k)} + e_{10} g_2^{(k)}+\cdots+ e_{n-1,0}  g_n^{(k)}
\end{equation}
where $g_1^{(k)}, g_2^{(k)},\ldots, g_{n}^{(k)} \in H^2(B)$. It is well known that the operator $T : H^2 \rightarrow H^2$, defined by $T(f) = f(B(z))$, is an isometry and its range is $H^2(B) = \{ foB : f \in H^2 \}$ (see \cite{cowen2019composition}). Hence, each $g_i^{(k)} \in H^2(B)$ can be written as $g_i^{(k)} = s_i^{(k)}(B(z))$ for some $s_i^{(k)}\in H^2$.\\
Thus equation (\ref{eq3.8}) becomes
\begin{equation}\label{eq3.9}
    f_k = e_{00} s_1^{(k)}(B(z)) + e_{10} s_2^{(k)}(B(z)) +\cdots+ e_{n-1,0}  s_n^{(k)}(B(z)).
\end{equation}
\vspace{0.25cm}
Define
\begin{center}
$q_m^{(ik)}(z)$ = $ exp \left( \dfrac{- \lvert s_i^{(k)}(z) \rvert^{\frac{1}{2}} \; - i \; (\lvert s_i^{(k)}(z) \rvert^{\frac{1}{2}})^{\sim}}{m} \right).$
\end{center}
Here $(\lvert s_i^{(k)}\rvert^{\frac{1}{2}})^{\sim}$ denotes for the harmonic conjugate of the harmonic extension of the boundary function $\lvert s_i^{(k)}(e^{i\theta})\rvert^{\frac{1}{2}}$ onto the unit disc (this is possible for all $L^p$ functions \cite{koosis1998introduction}). 

So, $\{q_m^{(ik)}(z)\}$ is a sequence of analytic functions such that $\lvert q_m^{(ik)}(z) \rvert \leq 1$ for each $m \ge 1$. Next, for each $k$, construct a sequence $h_m^{(k)}(z)$ = $q_m^{(1k)}(z)$ $q_m^{(2k)}(z)\ldots q_m^{(n k)}(z)$, then $\{h_m^{(k)}(z)\} \in H^\infty$ and 
\begin{equation*}
\hspace{-1cm} h_m^{(k)}(B(z)) f_k(z) = h_m^{(k)}(B(z)) \left (e_{00} s_1^{(k)}(B(z)) +\cdots+ e_{n-1,0}  s_{n}^{(k)}(B(z)) \right ) 
\end{equation*}
\[\hspace{2.5 cm} = e_{00} h_m^{(k)}(B(z)) s_1^{(k)}(B(z)) +\cdots+ e_{n-1,0}  h_m^{(k)}(B(z)) s_n^{(k)}(B(z)). \]
\smallskip
Moreover,\\
\smallskip
$\lvert h_m^{(k)}(B(z))f_k\rvert \leq \lvert q_m^{(1k)}(B(z)) \rvert \lvert s_1^{(k)}(B(z)) \rvert +\cdots+ \lvert q_m^{(nk)}(B(z))\rvert \lvert s_n^{(k)}(B(z))\rvert$\\
\smallskip
\hspace{1cm}$=exp \left ( \dfrac{- \lvert s_1^{(k)}(z)\rvert^{\frac{1}{2}}}{m} \right )\lvert s_1^{(k)}(B(z))\rvert+\cdots+ exp \left ( \dfrac{- \lvert s_n^{(k)}(z)\rvert^{\frac{1}{2}}}{m}\right)\lvert s_n^{(k)}(B(z))\rvert$\\
\smallskip
\hspace{1cm}$\leq C$ for $C$ to be any constant.\\
Hence $h_m^{(k)}(B(z)) f_k \in H^\infty$ for each $k = 1,2,3$.\\
\smallskip
Note that $h_m^{(1)}(B(z)) \; h_m^{(2)}(B(z)) \;h_m^{(3)}(B(z))$ $\to$ 1 $a.e.$
Therefore  $t_m(B(z)) := h_m^{(1)}(B(z)) \; h_m^{(2)}(B(z)) \;h_m^{(3)}(B(z)) \to 1 \; a.e.$ as $m \to \infty$, and $\lvert t_m(B(z)) - 1 \rvert \to 0 \; a.e.$, implies 
\begin{center}
    $\|t_m(B(z)) - 1\|_\infty$ $\longrightarrow$ 0 as $m \to \infty$.
\end{center}
We can see that $\al(t_m(B(z)) f - f)$ = $\al(f \; (t_m(B(z)) - 1))$ $\leq \al(f) \; \|t_m(B(z)) - 1\|_\infty $, i.e, 
\begin{center}
    $\al(t_m(B(z)) f - f)$ $\longrightarrow$ 0 as $m \to \infty$.
\end{center}
Since $h_m^{(k)}(B(z))f_k \in H^\infty$, so $ \{t_m(B(z)) f\}$ $= h_m^{(1)}(B(z)) f_1$ $h_m^{(2)}(B(z))f_2$ $h_m^{(3)}(B(z))f_3$ belongs to  $H^\infty$.\\ 
Lastly, the invariance of $\mathcal{M}$ under $H^\infty(B)$ gives $\{t_m(B(z)) f\}$ $\in \mathcal{M}$. So, there is a sequence $\{t_m(B(z)) f\}$ in $\mathcal{M} \cap H^\infty$ such that $\al(t_m(B(z)) f - f)$ $\to$ 0 as $m \to \infty$, which implies that $f \in [\mathcal{M} \cap H^\infty]_\al$. Therefore, $\mathcal{M}$ = $[\mathcal{M} \cap H^\infty]_\al$.
\end{proof}
We now revert to the proof of Theorem \ref{theo3.5}.
\begin{proof}[Proof of Theorem 3.5]
We first show that $ \mathcal{M} \cap H^\infty \neq [0]$. Let $ 0 \neq f \in \mathcal{M}$ then as argued previously in Lemma~\ref{lem3.11}, we can express $f$ as 
\begin{center}
    $f$ = $f_1$ $f_2$ $f_3$ 
\end{center}
for some $f_1, f_2, f_3 \in H^2$. We follow the same argument to express each $f_k$ as 
\begin{center}
    $ f_k = e_{00} s_1^{(k)}(B^2(z)) + e_{10} s_2^{(k)}(B^2(z)) +\cdots+ e_{2n-1,0} s_{2n}^{(k)}(B^2(z)) $
\end{center}
for some $s_1^{(k)}(B^2(z))$, $s_2^{(k)}(B^2(z))$,\ldots, $s_{2n}^{(k)}(B^2(z)) \in H^2(B^2)$. Define
\begin{center}
    $q_{ik}(z)$ = $ exp \left( \dfrac{- \lvert s_i^{(k)}(z) \rvert^{\frac{1}{2}} \; - i \; (\lvert s_i^{(k)}(z) \rvert^{\frac{1}{2}})^{\sim}}{2} \right)$
\end{center}
( $\sim$ denotes the harmonic conjugate and defined in a similar fashion as above).\\
\smallskip
Furthermore, $\lvert q_{ik}(z) \rvert \in H^\infty$ and $\lvert q_{ik}(z) \rvert \leq 1$.\\\smallskip
Define $h_k(z) = q_{1k}(z) \; q_{2k}(z) \ldots q_{2n,k}(z)  \in H^\infty$. Note that,
\\ \smallskip
$ h_k(B^2(z)) f_k(z) = h_k(B^2(z)) \left (e_{00} s_1^{(k)}(B^2(z))  +\cdots+ e_{2n-1,0} s_{2n}^{(k)}(B^2(z)) \right )$ 

\hspace{1.5cm}  $= e_{00} h_k(B^2(z)) s_1^{(k)}(B^2(z)) +\cdots+ e_{2n-1,0} h_k(B^2(z)) s_{2n}^{(k)}(B^2(z))$\\ \smallskip
and hence $ h_k(B^2(z)) f_k(z)\in H^\infty$ for each $k = 1,2,3$.\\ 
\smallskip
This implies $h_1(B^2(z))h_2(B^2(z))h_3(B^2(z)) f$ is in $H^\infty$. By Lemma~\ref{lem2.5} we see that $h_1(B^2(z)) h_2(B^2(z)) h_3(B^2(z)) f \in \mathcal{M} \cap H^\infty$. This establishes that $\mathcal{M} \cap H^\infty \neq [0]$ whenever $ \mathcal{M} \neq [0]$.
\par
In view of Lemma~\ref{lem3.9}, $ \mathcal{M} \cap H^\infty $ is a weak*-closed subspace of $H^\infty$ which is also invariant under $T_{B^2}$ and $T_{B^3}$, but not under $T_B$. By Theorem~\ref{theo3.3}, 
   \[ \mathcal{M} \cap H^\infty = \left (\sum_{i=1}^{k} \oplus \langle \varphi_i \rangle \right ) \oplus \sum_{j=1}^{r} \oplus B^2 J_j H^\infty(B)\]
where $k \leq 2r-1$, and it follows from the Lemma~\ref{lem3.7} that
\[ \mathcal{M} = \left [ \left (\sum_{i=1}^{k} \oplus \langle \varphi_i \rangle \right ) \oplus \sum_{j=1}^{r} \oplus B^2 J_j H^\infty(B) \right ]_{\al} \]
\[ \; \; \; = \left (\sum_{i=1}^{k} \oplus \langle \varphi_i \rangle \right ) \oplus \left [ \sum_{j=1}^{r} \oplus B^2 J_j H^\infty(B)
\right ]_{\al}\]
\begin{equation}\label{eq3.10}
    \hspace{2.6cm}  = \left (\sum_{i=1}^{k} \oplus \langle \varphi_i \rangle \right ) \oplus B^2 \; \left[(J_1 H^\infty(B) \oplus\cdots\oplus J_r H^\infty(B)) \right]_{\al}.
\end{equation}

We now claim that $[ (J_1 H^\infty(B) \oplus\cdots\oplus J_r H^\infty(B)) ]_\al$ = $[(J_1 M_\al(B) \oplus\cdots\oplus J_r M_\al(B))]_\al$.\\
\smallskip
It can easily be shown $[ (J_1 H^\infty(B) \oplus\cdots\oplus J_r H^\infty(B)) ]_\al$ $\subset$ $[(J_1 M_\al(B) \oplus\cdots\oplus J_r M_\al(B))]_\al$ because $H^\infty(B)$ $\subset$ $M_\al(B)$.\\
\smallskip
To show the other containment, consider $ f \in J_1 
M_\al(B) \oplus\cdots\oplus J_r M_\al(B)$, $\exists$ $ f_1,\ldots,f_r \in M_\al(B)$ such that $ f = J_1 f_1 +\cdots+ J_r f_r$. Since  $M_\al(B) = \left [H^\infty(B) \right]_{\al}$, there exist the sequence of functions $\{f_n^{(1)}\} ,\ldots, \{f_n^{(r)}\}$ in $H^\infty(B)$ such that
\[ \al( f_n^{(i)} - f_i) \longrightarrow 0 \; \; \text{ as} \; \;  n \to \infty\]
for $ i = 1, 2,\ldots, r$. \\
Because $J_i$ is $B$-inner function, $ \al( J_i f_n^{(i)} - J_i f_i)$ $\leq$ $\|J_i\|_\infty \; \al( f_n^{(i)} - f_i) $, we have
\[ \al( J_i f_n^{(i)} - J_i f_i) \longrightarrow 0 \; \; \text{ as} \; \; n \to \infty \]
which again gives \[J_1 f_n^{(1)} +\cdots+ J_r f_n^{(r)} \longrightarrow J_1 f_1 +\cdots+ J_r f_r \; \;  \text{in} \; \;  H^\al \; \; \text{as} \; \;  n \longrightarrow \infty.\]
Thus, $ f \in  \left [ (J_1 H^\infty(B) \oplus\cdots\oplus J_r H^\infty(B)) \right ]_{\al} $ and so $[J_1 
M_\al(B) \oplus\cdots\oplus J_r M_\al(B)]_\al$ $\subset$ $[ (J_1 H^\infty(B) \oplus\cdots\oplus J_r H^\infty(B)) ]_\al$.\\
Hence equation (\ref{eq3.10}) becomes:  
\[
    \mathcal{M} = \left (\sum_{i=1}^{k} \oplus \langle \varphi_i \rangle \right ) \oplus B^2 \; \left[ (J_1 M_\al(B) \oplus\cdots\oplus J_r M_\al(B)) \right]_{\al}.
\]
This completes the proof.
\end{proof} 
 
\begin{cor}
Let $\mathcal{M}$ be a closed subspace of $H^\al$ which is invariant under $H_1^\infty(z)$, but is not invariant under $H^\infty(z)$. Then there exist scalars $\beta_1$ and $\beta_2$ in $\mathbb{C}$ such that $|\beta_1|^2 + |\beta_2|^2 = 1$, $\beta_1 \neq 0$, and an inner function $J$ such that
\[ \mathcal{M} = (\beta_1 + \beta_2 z) J  \oplus z^2 J H^\al. \]
\end{cor}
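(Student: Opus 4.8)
The plan is to obtain the corollary as the $n=1$ specialization of Theorem~\ref{theo3.5}, applied with the Blaschke factor $B(z)=z$. In this case there is a single zero $\al_1=0$, so $n=1$, and the constraints $r\le n$ and $k\le 2r-1$ in Theorem~\ref{theo3.5} force $r=1$ and $k\le 1$. I would begin by recording the relevant identifications: $H^\infty(z)=H^\infty$, a $B$-inner function is exactly a classical inner function, the algebra $H_1^\infty(z)$ coincides with $H_1^\infty(B)$ for $B(z)=z$ (compare Cesaro means of an $H^\infty$ function whose derivative vanishes at the origin), and $M_\al(z)=[H^\infty(z)]_\al=H^\al$ by the very definition of $H^\al$. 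By the remark following Lemma~\ref{lem2.5}, the hypothesis that $\mathcal{M}$ is invariant under $H_1^\infty(z)$ but not under $H^\infty(z)$ is precisely the hypothesis of Theorem~\ref{theo3.5}, namely that $\mathcal{M}$ is invariant under $T_{z^2}$ and $T_{z^3}$ but not under $T_z$.

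Applying Theorem~\ref{theo3.5} to $\mathcal{M}$ with $r=1$ and a single inner function $J:=J_1$ yields
\[ \mathcal{M} = \Bigl(\sum_{i=1}^{k}\oplus\langle\varphi_i\rangle\Bigr)\oplus z^2\,[J\,M_\al(z)]_\al = \Bigl(\sum_{i=1}^{k}\oplus\langle\varphi_i\rangle\Bigr)\oplus z^2\,[J H^\al]_\al, \]
where $\varphi_i=(\beta_{1i}+\beta_{2i}z)J$ and $\lvert\beta_{1i}\rvert^2+\lvert\beta_{2i}\rvert^2=1$. Since $J$ is inner, $T_J$ is an $\al$-isometry of $H^\al$ onto $JH^\al$, so $JH^\al$ is already $\al$-closed and $[JH^\al]_\al=JH^\al$. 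To pin down $k$, observe that $k=0$ would give $\mathcal{M}=z^2JH^\al$, which satisfies $z\cdot z^2JH^\al=z^2J(zH^\al)\subseteq z^2JH^\al$; thus $\mathcal{M}$ would be $T_z$-invariant and hence, by Lemma~\ref{lem2.5}, invariant under $H^\infty(z)$, contrary to hypothesis. Therefore $k=1$, and setting $\beta_1:=\beta_{11}$, $\beta_2:=\beta_{21}$ we get $\mathcal{M}=(\beta_1+\beta_2z)J\oplus z^2JH^\al$ with $\lvert\beta_1\rvert^2+\lvert\beta_2\rvert^2=1$.

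It remains to verify $\beta_1\neq 0$, which I would establish by contradiction. If $\beta_1=0$, then $\lvert\beta_2\rvert=1$ and $zJ=\beta_2^{-1}\varphi_1\in\mathcal{M}$; together with $z^2JH^\al\subseteq\mathcal{M}$ this forces $z^mJ\in\mathcal{M}$ for every $m\ge 1$, so $\mathcal{M}$ contains the $\al$-closure of $\mathrm{span}\{z^mJ:m\ge 1\}$, which equals $zJH^\al$ because polynomials are $\al$-dense in $H^\al$ (Cesaro means, together with Lemma~\ref{lem3.8}) and $T_{zJ}$ is an isometry. Conversely $\mathcal{M}=\mathbb{C}\,zJ\oplus z^2JH^\al\subseteq zJH^\al$, using $1\in H^\al$ and $zH^\al\subseteq H^\al$; hence $\mathcal{M}=zJH^\al$, which is $T_z$-invariant and therefore $H^\infty(z)$-invariant by Lemma~\ref{lem2.5}, contradicting the hypothesis. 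Thus $\beta_1\neq 0$, and the displayed decomposition is exactly the one asserted. The only ingredient beyond a direct specialization of Theorem~\ref{theo3.5} is this final dichotomy, which is routine, so I expect no real obstacle.
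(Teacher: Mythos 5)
Your proposal is correct and follows essentially the same route as the paper: specialize Theorem~\ref{theo3.5} to $B(z)=z$, drop the $\al$-closure because multiplication by the inner function $z^2J$ is an isometry (so its range is closed), and rule out $\beta_1=0$ via the non-invariance under $T_z$. You are in fact slightly more careful than the paper, which does not explicitly exclude $k=0$ nor spell out that $\beta_1=0$ would force $\mathcal{M}=zJH^\al$; these details are worth keeping.
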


\begin{proof}
Taking $B(z) = z$ in Theorem~\ref{theo3.5}, we get: 
\[ \mathcal{M} = (\beta_1 + \beta_2 z) J  \oplus \left [ z^2 J H^\al \right ]_\al, \] where $J$ is an inner function, and $\beta_1$ and $\beta_2$ are scalars in $\mathbb{C}$ such that $|\beta_1|^2 + |\beta_2|^2 = 1$. Since $z^2$ and $J$ are inner functions and the operator of multiplication by inner function on $H^\al$ acts as an isometry, therefore 
\[ \mathcal{M} = (\beta_1 + \beta_2 z) J  \oplus z^2 J H^\al. \]
Moreover, $\beta_1 \neq 0$ because if $\beta_1 = 0$ then $\mathcal{M}$ becomes invariant under $z$, which is clearly a contradiction.
\end{proof}

We now prove Theorem~\ref{theo3.6}, which generalizes the Beurling type result proved by Chen \cite{chen2017general}. The proof is on the similar lines as Theorem~\ref{theo3.6}, so we only provide a broad sketch. 
\begin{proof}[Proof of Theorem 3.6]
It can be shown as in the proof of Theorem~\ref{theo3.5} that $\mathcal{M} \cap H^\infty$ is non-trivial and invariant under $T_B$. Further, by Lemma~\ref{lem3.9} it follows that $\mathcal{M} \cap H^\infty$ is a weak*-closed subspace of $H^\infty$. Next, in view of Lemma~\ref{lem3.11} we conclude that $\mathcal{M} = [\mathcal{M} \cap H^\infty]_{\al}$. By Theorem~\ref{theo3.4}, there exist $B$-inner functions $J_1,\ldots,J_r$ (with $r\le n$) such that
\begin{equation}\label{eq3.11}
    \mathcal{M} \cap H^\infty = J_1 H^\infty(B) \oplus\cdots\oplus J_r H^\infty(B). 
 \end{equation}
Now, by taking $\al$-closure on both sides of equation~\ref{eq3.11}, 
\[ \mathcal{M} = \left [ J_1 H^\infty(B) \oplus\cdots\oplus J_r H^\infty(B) \right ]_{\al}.\]
The claim that $\left [ J_1 H^\infty(B) \oplus\cdots\oplus J_r H^\infty(B) \right]_{\al}$ = $\left [J_1 M_\al(B) \oplus\cdots\oplus J_r M_\al(B) \right ]_\al$ follows by the arguments identical to those in Theorem~\ref{theo3.5}. 
\end{proof}

\begin{cor}[Chen~\cite{chen2017general}]
Suppose $\mathcal{M}$ is a closed subspace of $H^\al$ such that $z \mathcal{M} \subset \mathcal{M}$. Then there exists an inner function $J$ such that $ \mathcal{M} = J H^\al$. 
\end{cor}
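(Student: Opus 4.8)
The plan is to read off this corollary as the special case $B(z)=z$ of Theorem~\ref{theo3.6}. A finite Blaschke factor with a single zero at the origin is precisely the monomial $z$, so in that situation $n=1$; consequently $H^2(B)=H^2$, $H^\infty(B)=H^\infty$, and hence $M_\al(B)=[H^\infty(z)]_\al=H^\al$. Moreover, for $B(z)=z$ a $B$-inner function is exactly a classical inner function, since $\{z^m\varphi:m\ge 0\}$ is orthonormal in $H^2$ if and only if $\lvert\varphi\rvert=1$ a.e.\ (the off-diagonal conditions force every nonzero Fourier coefficient of $\lvert\varphi\rvert^2$ to vanish, and the diagonal condition fixes the constant term at $1$).

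With these identifications in hand, I would argue as follows. The hypothesis $z\mathcal{M}\subset\mathcal{M}$ says exactly that $\mathcal{M}$ is invariant under $T_B$ for $B(z)=z$, so Theorem~\ref{theo3.6} applies and yields $B$-inner functions $J_1,\dots,J_r$ with $r\le n=1$ such that $\mathcal{M}=[J_1M_\al(B)\oplus\cdots\oplus J_rM_\al(B)]_\al$. Assuming $\mathcal{M}$ is non-trivial (if $\mathcal{M}=[0]$ no inner function $J$ can satisfy $JH^\al=\mathcal{M}$, since $J\in JH^\al$; this degenerate case is excluded already in Chen's original statement, Theorem~\ref{Theorem1.1}), the direct sum collapses to a single term and $\mathcal{M}=[J_1H^\al]_\al$, where $J_1$ is an inner function. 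Finally, because $\lvert J_1\rvert=1$ a.e.\ the operator of multiplication by $J_1$ is an isometry of $H^\al$, as recorded in Section~\ref{sec2}; therefore $J_1H^\al$, being the isometric image of the complete space $H^\al$, is already $\al$-closed, so $[J_1H^\al]_\al=J_1H^\al$. Taking $J=J_1$ completes the proof. This last step is the same closure-removal device used in the corollary following Theorem~\ref{theo3.5}.

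I do not expect a genuine obstacle here; the proof is essentially bookkeeping. The only points that need care are checking that the Blaschke-factor apparatus specializes correctly when $B(z)=z$, in particular that $M_\al(B)=H^\al$ and that $B$-inner reduces to inner, and the observation that $r\le n=1$ forces the direct sum to have a single summand, so that no genuine matrix of inner functions appears.
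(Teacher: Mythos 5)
Your proposal is correct and follows exactly the paper's route: specialize Theorem~\ref{theo3.6} to $B(z)=z$ (so $n=1$, $r\le 1$, $M_\al(z)=H^\al$, and $B$-inner reduces to inner) and then remove the $\al$-closure by observing that multiplication by an inner function is an isometry, so $JH^\al$ is already closed. The extra bookkeeping you supply (collapsing the direct sum, handling the trivial subspace) is consistent with, and slightly more explicit than, the paper's own two-line argument.
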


\begin{proof}
Take $B(z) = z$ in Theorem~\ref{theo3.6}, we have 
\[ \mathcal{M} = [J H^\al]_\al \] where $J$ is an inner function. Also, the multiplication by an inner function on $H^\al$ will be an isometry, which implies \[ \mathcal{M} = J H^\al.\]
\end{proof}

In the next section, we show that descriptions of the invariant subspaces of Theorem~\ref{theo3.5} and ~\ref{theo3.6} can be made sharper when $\al$ is a continuous rotationally symmetric norm.

\section{Case of Continuous Rotationally Symmetric Norms}\label{sec4}
For a finite Blaschke factor $B(z)$, it is well known that the classical Hardy spaces $H^p$ ($0<p\le\infty$) can be decomposed as:
\begin{center}
    $H^p = e_{00} H^p(B) \oplus e_{10} H^p(B) \oplus\cdots\oplus e_{n-1,0}  H^p(B).$ 
\end{center}
Recall that $H^p(B)$ denotes the closed linear span of $\{ B^m : m=0,1,2,\ldots\}$ and $\oplus$ is an algebraic direct sum. This direct sum becomes an orthogonal direct sum when $p= 2$.\\
Consider $f\in H^\al$, and since $f \in H^1$, we can write $f(z)$ as a Fourier series
   $$ f(z) = \sum_{j=0}^{\infty} \hat{f}(j) z^j $$
where $\hat{f}(j)$ = $\int_\T f(z) z^{-j} dm $ for $j = 0,1,2, \ldots$
\par
\smallskip
Consider the $n$th Cesaro means
\[ \sigma_n(f) = \frac{S_0(f) + S_1(f) +\cdots+ S_n(f)}{n+1} \] 
where $ S_k(f)$ = $\sum_{j=0}^{k} \hat{f}(j) z^j $ stands for the $k$th partial sum.\\
The following lemma shows that any function in $H^\al$ converge to it's Cesaro means in $\al$-norm. 
\begin{lem}[Chen~\cite{Chen2014LebesgueAH}]\label{lem4.1}
Let $\al$ be a continuous rotationally symmetric norm and $f \in H^\al$. Then $\al(\sigma_n(f) - f) \rightarrow 0$ as $n \to \infty$.
\end{lem}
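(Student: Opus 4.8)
The plan is to reduce the statement to the already-known Cesàro convergence on $L^\al$ (or to a direct verification using the defining properties of a rotationally symmetric norm), exploiting the fact that the Cesàro means arise by convolution against the nonnegative Fejér kernel. First I would recall that $\sigma_n(f) = f * K_n$, where $K_n$ is the Fejér kernel; since $K_n \ge 0$ and $\int_\T K_n\, dm = 1$, the map $f \mapsto f * K_n$ is an average of rotations of $f$: precisely $\sigma_n(f)(z) = \int_\T f(\overline w z)\, K_n(w)\, dm(w)$. Because $\al$ is rotationally symmetric we have $\al(f_w) = \al(f)$ for every $w \in \T$, and hence by the integral form of the triangle inequality (a vector-valued Minkowski/Jensen estimate, valid since $K_n\,dm$ is a probability measure and $\al$ is a norm), $\al(\sigma_n(f)) \le \int_\T \al(f_w)\,K_n(w)\,dm(w) = \al(f)$. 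Thus each Cesàro operator is a contraction on $H^\al$ with respect to $\al$, and likewise on $L^\al$.

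Next I would use this uniform bound together with a density argument. The analytic (trigonometric) polynomials are $\al$-dense in $H^\al$: indeed $H^\al$ is by definition the $\al$-closure of $H^\infty$, and the polynomials are $\al$-dense in $H^\infty$-generated subspace because on the closed unit ball of $L^\infty$ the $\al$-topology coincides with the $\|\cdot\|_2$-topology (Lemma~\ref{lem3.8}), while polynomials are $\|\cdot\|_2$-dense in $H^2 \supset H^\infty$; a scaling argument then gives $\al$-density of polynomials in $H^\al$. For a polynomial $p$ of degree $d$ one has $\sigma_n(p) \to p$ uniformly, hence in $\al$-norm since $\al(\cdot) \le \|\cdot\|_\infty$ for rotationally symmetric $\al$. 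Given $f \in H^\al$ and $\varepsilon > 0$, choose a polynomial $p$ with $\al(f-p) < \varepsilon$; then
\[
\al(\sigma_n(f) - f) \le \al(\sigma_n(f-p)) + \al(\sigma_n(p) - p) + \al(p - f) \le 2\al(f-p) + \al(\sigma_n(p)-p) < 3\varepsilon
\]
for $n$ large, using the contraction bound on the first term. Letting $\varepsilon \to 0$ completes the argument.

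The main obstacle is establishing the contraction estimate $\al(\sigma_n(f)) \le \al(f)$ rigorously: one must justify pushing the norm $\al$ inside the integral $\int_\T f_w\, K_n(w)\,dm(w)$, i.e.\ a Minkowski-type inequality $\al\!\left(\int f_w\,d\mu(w)\right) \le \int \al(f_w)\,d\mu(w)$ for the probability measure $d\mu = K_n\,dm$. Since $K_n$ is a nonnegative trigonometric polynomial, the convolution is in fact a finite average-like expression that can be approximated by Riemann sums $\sum_j f_{w_j}\mu(E_j)$; for these the ordinary triangle inequality applies, and one passes to the limit using that the Riemann sums converge to $\sigma_n(f)$ in, say, $\|\cdot\|_2$ (hence in $\al$ on bounded sets, via Lemma~\ref{lem3.8}, after an additional truncation/normalization step), together with $\al(f_{w_j}) = \al(f)$. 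Alternatively, if this lemma is simply quoted from \cite{Chen2014LebesgueAH}, the cleanest route is to cite the corresponding statement there for $L^\al$ and note that $H^\al \subset L^\al$ is invariant under each $\sigma_n$, so the $L^\al$ result restricts to $H^\al$ verbatim.
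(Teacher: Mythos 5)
The paper does not prove Lemma~\ref{lem4.1} at all: it is quoted verbatim from Chen's work on rotationally symmetric norms, so there is no in-paper argument to compare against. Your proof is the standard one for this kind of statement and is essentially sound: write $\sigma_n(f)$ as the Fej\'er average $\int_\T f_w K_n(w)\,dm(w)$ of rotations, use rotational symmetry plus a Minkowski-type inequality to get the uniform contraction bound $\al(\sigma_n(f))\le\al(f)$, prove convergence on a dense class (polynomials, or all of $H^\infty$, where Lemma~\ref{lem3.8} reduces everything to $\|\cdot\|_2$-convergence on a fixed ball of $L^\infty$), and finish with the usual $3\varepsilon$ argument.

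The one step you should tighten is the contraction estimate for \emph{unbounded} $f\in L^\al$. Your Riemann-sum argument genuinely works only for $f\in L^\infty$ (the sums need not converge in $\|\cdot\|_2$ when $f\notin L^2$, and $L^\al$ only embeds in $L^1$), and you cannot then pass to general $f$ by $\al$-approximation with bounded functions, since that would presuppose the very contraction you are proving. The clean route, which your phrase ``truncation/normalization step'' gestures at but does not spell out, is: note $|\sigma_n(f)|\le K_n*|f|$ and that $\al$ is monotone on nonnegative functions, take simple functions $s_m\uparrow|f|$, apply the bounded-case contraction to each $s_m$, observe $K_n*s_m\uparrow K_n*|f|$ pointwise, and invoke the generalized monotone convergence theorem (Proposition~\ref{prop2.2}) on both sides to conclude $\al(\sigma_n(f))\le\al(f)$. (Alternatively, if one knows $\al''=\al$ for continuous normalized gauge norms, the Minkowski inequality follows at once by duality.) With that step made explicit, the proof is complete.
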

We now obtain a similar decomposition for $H^\al$ in the context of the Blaschke factor $B(z)=z^n$, when $\al$ is a continuous rotationally symmetric norm. 

\begin{lem}\label{lem4.2}
Let $\al$ be a continuous rotationally symmetric norm. We can decompose the Banach space $H^\al$ as
    $$H^\al =  M_\al(z^n) \oplus z M_\al(z^n) \oplus\cdots\oplus z^{n-1} M_\al(z^n)$$
where $M_\al(z^n)$ = $\left [ H^\infty(z^n) \right ]_\al$ and $\oplus$ is the algebraic direct sum.
\end{lem}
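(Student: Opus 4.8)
The plan is to realize $H^\al$ as the internal algebraic direct sum of the spaces $z^j M_\al(z^n)$ by producing, for each $0\le j\le n-1$, the projection $P_j$ onto the Fourier modes congruent to $j$ modulo $n$, checking that $P_j$ is $\al$-bounded with range $z^jM_\al(z^n)$ and that $\sum_{j}P_j$ acts as the identity on $H^\al$, and treating directness of the sum by a separate Fourier-support argument.

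First I would dispose of the easy inclusion $\bigoplus_{j=0}^{n-1} z^j M_\al(z^n)\subseteq H^\al$: since $H^\infty(z^n)\subseteq H^\infty\subseteq H^\al$ and multiplication by $z^j$ is an $\al$-isometry (because $|z^j|=1$ a.e.\ on $\T$, exactly as for $T_B$), each $z^jM_\al(z^n)=z^j[H^\infty(z^n)]_\al$ is a closed subspace of $H^\al$. For directness, I would observe that any $g\in M_\al(z^n)$ is an $\al$-limit, hence an $L^1$-limit (as $\al\ge\|\cdot\|_1$), of polynomials in $z^n$; by $\|\cdot\|_1$-continuity of Fourier coefficients, $\hat g$ is supported on $n\mathbb{Z}_{\ge 0}$. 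Thus for $f_j\in M_\al(z^n)$ the function $z^jf_j$ has Fourier support in $\{m\ge 0:\ m\equiv j\pmod n\}$, and since these residue classes are pairwise disjoint, an identity $\sum_{j=0}^{n-1}z^jf_j=0$ forces every $z^jf_j=0$ in $L^1$, i.e.\ $f_j=0$.

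For surjectivity I would set $\omega=e^{2\pi i/n}$ and define, for $f\in H^\al\subseteq H^1$,
\[
 P_j f(z)=\frac1n\sum_{k=0}^{n-1}\omega^{-jk}f(\omega^k z),\qquad 0\le j\le n-1 .
\]
A short Fourier computation gives $P_jf=\sum_{m\equiv j\,(n)}\hat f(m)z^m$ and hence $\sum_{j=0}^{n-1}P_jf=f$. The crucial estimate is $\al(P_jf)\le\al(f)$: the rotational symmetry of $\al$ extends from $L^\infty$ to $H^\al$ — approximate $f$ in the $\al$-norm by $H^\infty$ functions and use that rotation is an $\al$-isometry on $H^\infty$ together with $\al$-continuity — so $\al(f(\omega^k\cdot))=\al(f)$ for each $k$, and the triangle inequality then yields $\al(P_jf)\le\frac1n\sum_k\al(f(\omega^k\cdot))=\al(f)$. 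Next, invoking Lemma~\ref{lem4.1}, the Ces\`aro means $\sigma_N(f)$ converge to $f$ in the $\al$-norm; each $\sigma_N(f)$ is an analytic polynomial, and regrouping its monomials by residue modulo $n$ gives $P_j(\sigma_N f)=z^j q_{j,N}(z^n)$ with $q_{j,N}$ a polynomial, so $P_j(\sigma_N f)\in z^jH^\infty(z^n)$. By $\al$-continuity of $P_j$, $P_jf=\lim_N P_j(\sigma_N f)\in z^j[H^\infty(z^n)]_\al=z^jM_\al(z^n)$, whence $f=\sum_{j=0}^{n-1}P_jf$ lies in $\bigoplus_{j=0}^{n-1}z^jM_\al(z^n)$.

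The only step I expect to require genuine care is the extension of the rotational-invariance identity $\al(f(\omega^k\cdot))=\al(f)$ from $L^\infty$ to all of $H^\al$, together with the attendant check that rotation maps $H^\al$ into itself; the remaining steps are routine manipulations of Fourier series and direct applications of the already-established properties of $\al$ (in particular Lemma~\ref{lem4.1} and the inequality $\al(fg)\le\|f\|_\infty\al(g)$).
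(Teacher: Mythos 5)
Your proof is correct and follows essentially the same route as the paper's: your projections $P_j$ are exactly the rotation averages $g_{n-j}$ used there, and both arguments place each component in $z^jM_\al(z^n)$ by combining Lemma~\ref{lem4.1} (Ces\`aro means) with the rotational invariance of $\al$ applied to $f(\omega^k z)$. The only difference is your explicit Fourier-support check that the sum is direct, which the paper leaves implicit.
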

\begin{proof}
Since $H^\infty(z^n) \subset H^\infty \subset H^\al$, so we have $M_\al(z^n) \subset H^\al$. Observe that for any natural number $k$ : $\al(z^k f)=\al(\lvert z^k f \rvert) = \al(f)$ for every $f \in H^\al$. Thus, multiplication by $z^k$ is an isometry on $H^\al$. This implies
\begin{center}
    $ M_\al(z^n) \oplus z M_\al(z^n) \oplus\cdots\oplus z^{n-1} M_\al(z^n) \subset H^\al$.
\end{center}

For the reverse inclusion, take an arbitrary $f \in H^\al$. We can write $f(z)$ as a Fourier series
   $$ f(z) = \sum_{j=0}^{\infty} \hat{f}(j) z^j $$
where $\hat{f}(j) = \int_\T f(z) z^{-j}\;dm$ for  $j = 0,1,2, \ldots$ \\
Let $\omega = e^{2 \pi i / n}$. For each $j = 1,2,\ldots,n$, define
$$ g_{n-j}(z)  = \frac{f(z)+ \omega^j f(\omega z) + \omega^{2j} f(\omega^2 z) +\cdots+ \omega^{(n-1)j} f(\omega^{n-1} z)}{n}. $$
Since $\al$ is a rotationally symmetric norm, so $f(\omega ^k z ) \in H^\al$. Thus, $g_{n-j} \in H^\al$.\\
For each $k = 0,1,\ldots,n-1$:
$$f(\omega^k z) = \sum_{j=0}^{\infty} \hat{f}(j) \; \omega^{kj} \; z^j.$$
\par
Next, a simple computation shows that the coefficient of $z^m$ in the expansion of $n \;g_{n-j}$ is given by
\begin{equation}\label{eq4.1}
\hat{f}(m) \; \left [ \frac{\omega^{n(j+m)} - 1}{\omega^{j+m} -1}  \right] \\
= \left\{ \begin{array}{lr}
0 \; \; , &  n \nmid {j+m} \\
\hat{f}(m) \; n \;  , & n \mid j+m \; .
\end{array} \right. 
\end{equation}
Equation (\ref{eq4.1}) is valid for all $m=0,1,2, \ldots$ and all $j=1,2,\ldots,n$. Therefore
\[ n\; g_{n-j} = n \hat{f}(n-j) z^{n-j} + n  \hat{f}(2n-j) z^{2n-j} + \cdots  .\]
Thus
\[ g_{n-j} = z^{n-j} h_{n-j}(z),   \]
where $h_{n-j}(z) = \hat{f}(n-j) + \hat{f}(2n-j) z^n + \cdots $.
Because $g_{n-j} \in H^\al$, we can see that $h_{n-j} \in H^\al$.
\par 
We now claim that $h_{n-j} \in M_\al(z^n)$. \\
Since $f(\omega^k z) \in H^\al$ for all $k =0,1,\ldots,n-1$ and $\sigma_l(f(\omega^k z))$ be the $l$-th Cesaro mean for $f(\omega^k z)$. By Lemma~\ref{lem4.1}, $$ \al(\sigma_l(f(\omega^k z)) - f(\omega^k z)) \longrightarrow 0 \; \; \;  \text{as} \; \;  l \to \infty.$$
A straightforward calculation shows that the sum 
\begin{equation}\label{eq4.2}
\sigma_l(f(z)) + \omega^j\sigma_l(f(\omega z)) + \omega^{2j}\sigma_l(f(\omega^2 z)) + \cdots + \omega^{(n-1)j}\sigma_l(f(\omega^{n-1} z)
\end{equation}
is of the form $z^{n-j}\kappa_l(z^n)$, where $\kappa_l(z^n)$ is a polynomial in $z^n$ which belong to $M_\al(z^n)$.\\
Further, the sum in equation (\ref{eq4.2}) converges in $H^\al$ to $f(z)+\omega^j f(\omega z) +\cdots+\omega^{(n-1)j} f(\omega^{n-1} z)$ as $l \to \infty$. 
Therefore, $z^{n-j} \kappa_l(z^n) \to n z^{n-j} h_{n-j}$ as $l \to \infty$ in $H^\al$.\\
That is, 
$$\frac{1}{n} \kappa_l(z^n) \longrightarrow h_{n-j}  \; \text{as}\; l \to \infty \; \text{in } \; H^\al.$$
Hence the claim follows. 
Therefore,
$f = h_0 + z h_1+\cdots+ z^{n-1} h_{n-1}$ belongs to 
$M_\al(z^n) \oplus z M_\al(z^n) \oplus\cdots\oplus z^{n-1} M_\al(z^n)$. This completes the proof.
\end{proof}
The proof of Lemma~\ref{lem4.2} relies on the fact that every function $f\in H^\al$ can be written as $f=f_0 + z f_1 +\cdots+z^{n-1}f_{n-1}$, and in turn we can express each $f_k$ in terms of the original $f$. This manipulation has been figured out only for $B(z)=z^n$. Hence, we make the following conjecture:
\begin{conj}
Let $\al$ be a continuous rotationally symmetric norm and $B$ be a finite Blaschke factor. Then is it true that $H^\al = e_{00} M_\al(B) \oplus e_{10} M_\al(B) \oplus\cdots\oplus e_{n-1,0}  M_\al(B)\;?$ 
\end{conj}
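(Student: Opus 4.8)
\emph{Proof proposal.} The plan is to imitate the passage from the $H^\infty$ results to the $H^\al$ results carried out in Section~\ref{sec3}: reduce the conjecture to the already-known decomposition (\ref{eq3.1}) of $H^\infty$, and transport it to $H^\al$ using that $H^\infty$ is $\al$-dense in $H^\al$ together with $\al$-continuity of the coordinate projections attached to $B$. Write $Q_i\colon H^2\to H^2$ for the projection onto the $i$-th summand of (\ref{eq2.1}), so $f=\sum_{i=0}^{n-1}e_{i0}\,Q_i'f$ with $Q_i'f\in H^2(B)$ and $Q_if=e_{i0}Q_i'f$. The inclusion $e_{00}M_\al(B)\oplus\cdots\oplus e_{n-1,0}M_\al(B)\subseteq H^\al$ is routine (each $e_{i0}\in H^\infty$ and $\al(e_{i0}g)\le\|e_{i0}\|_\infty\al(g)$), and the sum is direct because $M_\al(B)\subseteq H^1(B)$ while $H^1=\bigoplus_i e_{i0}H^1(B)$ is an algebraic direct sum by the $p=1$ case of \cite{sahni2012lax}. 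Taking $\al$-closures in (\ref{eq3.1}) and using $H^\al=[H^\infty]_\al$ already gives $H^\al=\big[\,e_{00}M_\al(B)\oplus\cdots\oplus e_{n-1,0}M_\al(B)\,\big]_\al$; hence the entire content of the conjecture is that the right-hand side is \emph{already} $\al$-closed, equivalently that every $f\in H^\al$ lies in it.

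The crucial observation is a formula for $Q_i$. Using that $\langle e_{j0},e_{i0}B^m\rangle=\delta_{ij}\delta_{m0}$ holds for \emph{all} $m\in\mathbb Z$ (for $m\ge 0$ this is orthonormality of the basis in (\ref{eq2.1}); for $m<0$ one checks $e_{i0}B^m\perp H^2$), one obtains $E_B[\overline{e_{i0}}\,e_{j0}]=\delta_{ij}$ and therefore
\[
Q_i=M_{e_{i0}}\circ E_B\circ M_{\overline{e_{i0}}},
\]
where $E_B$ is the conditional expectation of $L^2$ onto the closed span of $\{B^m:m\in\mathbb Z\}$, i.e. the weighted fibre average $E_Bf(z)=\sum_{\zeta\in B^{-1}(B(z))}\lvert B'(\zeta)\rvert^{-1}f(\zeta)$ (the weights sum to $1$ since $B(0)=0$ forces $B_\ast m=m$). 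As $M_{e_{i0}}$ and $M_{\overline{e_{i0}}}$ are $\al$-bounded (multiplication by an $L^\infty$ function), the whole conjecture reduces to one point: \emph{$E_B$ is bounded on $\mathcal L^\al$} (being positive and unital it is then $\al$-contractive). Granting this, one finishes as in Lemmas~\ref{lem3.11} and \ref{lem4.2}: $Q_i$ extends to an $\al$-bounded projection of $H^\al$ into $[\,e_{i0}H^\infty(B)\,]_\al$, which equals $e_{i0}M_\al(B)$ and is $\al$-closed (if $g_n\in H^\infty(B)$ and $e_{i0}g_n$ is $\al$-Cauchy then so is $g_n=E_B[\overline{e_{i0}}(e_{i0}g_n)]$, so it converges in $M_\al(B)$); the identity $f=\sum_iQ_if$ holds on the $\al$-dense subspace $H^\infty$ by (\ref{eq3.1}), both sides are $\al$-continuous, so it holds on all of $H^\al$, giving the missing inclusion.

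This reduction reproves Lemma~\ref{lem4.2} transparently: when $B(z)=z^n$ the fibre over $B(z)$ is $\{\omega^k z:0\le k\le n-1\}$ with $\omega=e^{2\pi i/n}$, all weights equal $1/n$, and $E_{z^n}f=\tfrac1n\sum_{k}f_{\omega^{-k}}$ is a convex combination of rotates of $f$, so $\al(E_{z^n}f)\le\al(f)$ by rotational symmetry. For a general finite Blaschke factor this is exactly where the argument breaks: the fibre map $z\mapsto B^{-1}(B(z))$ is not a rotation, and $E_B$ is a genuine conditional expectation with non-constant weights, so its $\al$-boundedness is no longer automatic. We therefore expect the $\al$-boundedness of $E_B$ to be the main obstacle.

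Two natural lines of attack suggest themselves. First, $E_B$ is simultaneously a contraction on $L^1$, $L^2$ and $L^\infty$, so it would be $\al$-contractive as soon as $\al$ is an interpolation norm for the couple $(L^1,L^\infty)$, i.e. whenever $\al$ is rearrangement invariant; one would thus want to decide whether continuous $\|.\|_1$-dominating normalized gauge norms, or at least continuous rotationally symmetric norms, are rearrangement invariant. They need not be (the gauge axiom only forces dependence on $\lvert f\rvert$, and rotation invariance is much weaker than rearrangement invariance, as shown e.g. by $\al(f)=\max\{\|f\|_1,\ \sup_{w\in\T}\int_\T\lvert f\rvert\lvert K_w\rvert\,dm\}$ for a suitably concentrated fixed kernel $K$ with $\|K\|_1=1$, which distinguishes an arc from an equimeasurable union of short arcs), so a separate argument is needed. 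Second, and more promising, one could try to prove $\al$-boundedness of this \emph{particular} averaging operator directly for every such norm — for instance by using that the $\al$- and $\|.\|_2$-topologies agree on $\mathbb B(L^\infty)$ (Lemma~\ref{lem3.8}) together with weak-$*$ continuity of $E_B$, or by a transference argument comparing $E_B$ with $E_{z^n}$ through the $n$-sheeted structure of $B$. Settling this, or exhibiting a gauge norm and a Blaschke factor for which the decomposition fails, is the crux of the conjecture.
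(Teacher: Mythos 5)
The statement you are addressing is stated in the paper as an open conjecture, not a theorem: the authors explicitly say that the manipulation underlying Lemma~\ref{lem4.2} ``has been figured out only for $B(z)=z^n$'' and leave the general Blaschke case unresolved. So there is no proof in the paper to compare against, and your proposal --- which candidly does not claim to close the gap --- should be judged as an analysis of the obstruction rather than as a proof. On that score it is sound and in fact sharper than the paper's own remark. Your identity $Q_i=M_{e_{i0}}\circ E_B\circ M_{\overline{e_{i0}}}$ is correct (the orthogonality $\langle e_{j0},e_{i0}B^m\rangle=\delta_{ij}\delta_{m0}$ for all $m\in\mathbb Z$ checks out, and $B_*m=m$ since $B(0)=0$), and it isolates the single analytic question on which the conjecture hinges: $\al$-boundedness of the fibre-averaging operator $E_B$. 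Your observation that for $B(z)=z^n$ this operator is a convex combination of rotations --- so that rotational symmetry of $\al$ gives contractivity for free --- is exactly the mechanism driving the paper's proof of Lemma~\ref{lem4.2} (there the functions $g_{n-j}=\frac1n\sum_k\omega^{kj}f(\omega^kz)$ play the role of $Q_{n-j}f$), and your diagnosis that the non-rotational fibre structure of a general $B$ is where this breaks is precisely the right refinement of the authors' vaguer statement.

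Two small cautions if you pursue this. First, the parenthetical ``being positive and unital it is then $\al$-contractive'' does not follow: positivity and unitality give $L^\infty$-contractivity, not contractivity for a general gauge norm, so boundedness and contractivity of $E_B$ on $\mathcal L^\al$ should be treated as separate claims. Second, your sketched example of a rotationally symmetric norm that is not rearrangement invariant is plausible but unverified; since the interpolation route stands or falls on this point, it would need to be worked out before you can rule that route out. Neither issue affects your main conclusion, namely that the conjecture reduces to the $\al$-boundedness of $E_B$ on $\mathcal L^\al$ (or a counterexample thereto), which is a genuine and useful contribution beyond what the paper records.
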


We are unaware whether the decomposition in Lemma~\ref{lem4.2} is valid for general $\|.\|_1$-dominating normalized gauge norms. So, we make the following conjecture:
\begin{conj}
Let $\alpha$ be a continuous $\|.\|_1$-dominating normalized gauge norm. Then is it true that $H^\al= M_\al(z^n) \oplus z M_\al(z^n)\oplus \cdots \oplus z^{n-1} M_\al(z^n) \; ?$
\end{conj}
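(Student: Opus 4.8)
This is posed as a question, so what follows is an attack plan rather than a proof. The inclusion $M_\al(z^n)\oplus zM_\al(z^n)\oplus\cdots\oplus z^{n-1}M_\al(z^n)\subseteq H^\al$ and the directness of the algebraic sum are immediate, exactly as in the first half of the proof of Lemma~\ref{lem4.2}: multiplication by $z^j$ is an $\al$-isometry of $H^\al$ and $M_\al(z^n)=[H^\infty(z^n)]_\al\subseteq H^\al$, while every element of $z^jM_\al(z^n)$ lies in $H^1$ with Fourier coefficients supported on $\{m\ge 0:m\equiv j\pmod n\}$; in particular any $f\in H^\al$ admits at most one decomposition $f=\sum_{j=0}^{n-1}z^jh_j$, namely the one obtained by sorting the Taylor coefficients of $f$ by residue class modulo $n$. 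So the entire content is the reverse inclusion $H^\al\subseteq\bigoplus_{j=0}^{n-1}z^jM_\al(z^n)$.

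Put $N:=\bigoplus_{j=0}^{n-1}z^jM_\al(z^n)$. The first observation is that $N$ is $\al$-dense in $H^\al$: decomposition (\ref{eq3.1}) with $B(z)=z^n$ gives $H^\infty=\bigoplus_{j=0}^{n-1}z^jH^\infty(z^n)\subseteq N$, and $H^\infty$ is $\al$-dense in $H^\al$ by definition. (Unlike in Lemma~\ref{lem4.2}, this needs neither the Fej\'er means of Lemma~\ref{lem4.1} nor the averaging over all rotations, both of which require a rotationally symmetric norm.) Hence the conjecture is equivalent to the assertion that $N$ is $\al$-closed, and, by the open mapping theorem, to the $\al$-boundedness on $H^\al$ of the coordinate projections $P_j:\sum_iz^ih_i\mapsto z^jh_j$. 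On the dense subspace $H^\infty$, writing $R_wg(z):=g(wz)$ and $\omega=e^{2\pi i/n}$, one computes
\[
P_jg=\frac1n\sum_{s=0}^{n-1}\omega^{-js}\,g(\omega^sz)\qquad(0\le j\le n-1),\qquad\text{whence}\qquad R_w=\sum_{j=0}^{n-1}w^jP_j .
\]
Therefore the conjecture holds if and only if the finitely many rotations $R_{\omega^s}$ ($1\le s\le n-1$) are $\al$-bounded on $H^\al$, and for this it is enough to prove $\al(g(\omega\cdot))\le C\,\al(g)$ for all $g\in L^\infty$: such a bound passes to $L^\al$, and hence to $H^\al=H^1\cap L^\al$, by completion, since $R_\omega$ preserves $H^1$.

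I expect this inequality to be the main --- perhaps the only --- obstacle. It is exactly what a rotationally symmetric norm supplies, with $C=1$; the question is whether it survives for a general continuous $\|.\|_1$-dominating normalized gauge norm, where $\al$ need not be invariant even under finite rotations (for instance $\al(f)=\max(\|f\|_1,\lambda\|f\chi_I\|_1)$ is not, yet there $R_\omega$ is still bounded, by $\lambda$). A concrete route: introduce the averaged norm $\beta(g):=\max_{0\le s<n}\al(g(\omega^s\cdot))$, which is again a continuous $\|.\|_1$-dominating normalized gauge norm and for which the $P_j$ are contractions, so that $H^\beta=\bigoplus_{j=0}^{n-1}z^jM_\beta(z^n)$; since $H^\beta\subseteq H^\al$ always, the conjecture is then equivalent to the single identity $H^\al=H^\beta$, i.e.\ to every $\al$-Cauchy sequence in $H^\infty$ being $\beta$-Cauchy. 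The rigidity $\|.\|_1\le\al\le\|.\|_\infty$, together with continuity of $\al$ (which forbids the $\|.\|_\infty$-type behaviour most apt to destroy invariance), makes it plausible that $\beta\sim\al$, and hence that the conjecture is true; a proof would have to make this rigidity precise. If instead $R_\omega$ can fail to be $\al$-bounded, the conjecture is false, and one should hunt for the counterexample among continuous gauge norms that are simultaneously ``far from $L^1$'' and asymmetric under the cyclic group $\langle R_\omega\rangle$; exhibiting such a norm, or showing none exists, is the heart of the matter.
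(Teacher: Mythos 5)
The paper offers no proof of this statement: it is posed explicitly as an open conjecture (the authors state they are ``unaware whether the decomposition in Lemma~\ref{lem4.2} is valid'' beyond the rotationally symmetric case), so there is nothing to match your argument against. You have correctly read it as a question and supplied a reduction rather than a proof, and the reduction is sound. I checked the individual steps: directness of the sum follows from the Fourier-support description of $z^jM_\al(z^n)$ inside $H^1$ (using $\al\ge\|\cdot\|_1$); density of $\bigoplus_j z^jH^\infty(z^n)$ follows from~(\ref{eq3.1}) with $B(z)=z^n$ together with the definition of $H^\al$ as the $\al$-closure of $H^\infty$; the passage from closedness of the algebraic sum to boundedness of the projections $P_j$ and back is the standard open-mapping/closed-graph argument; and the identities $P_j=\frac1n\sum_s\omega^{-js}R_{\omega^s}$ and $R_w=\sum_j w^jP_j$ on $H^\infty$ correctly convert the problem into the $\al$-boundedness of the $n$ rotations $R_{\omega^s}$.

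This genuinely adds something the paper does not have. The paper's proof of Lemma~\ref{lem4.2} in the rotationally symmetric case leans on the full rotation group (to get $f(\omega^kz)\in H^\al$) and on Fej\'er convergence (Lemma~\ref{lem4.1}); your route shows that the only true obstruction is the boundedness of a single finite cyclic group of rotations, and in particular that the decomposition already holds for any continuous gauge norm invariant merely under $z\mapsto\omega z$ (such as your symmetrized norm $\beta$), with Ces\`aro means bypassed entirely by density plus the open mapping theorem. Two caveats so that the record is accurate: the concluding paragraph (``the rigidity $\|\cdot\|_1\le\al\le\|\cdot\|_\infty$ \dots makes it plausible that $\beta\sim\al$'') is heuristic and does not settle the conjecture in either direction --- note also that the two-sided bound $\al\le\|\cdot\|_\infty$ is proved in the paper only for rotationally symmetric norms, so even that rigidity is not available in the generality of the conjecture; and your illustrative norm $\max(\|f\|_1,\lambda\|f\chi_I\|_1)$ is normalized only when $\lambda\,m(I)\le1$. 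Neither affects the validity of the equivalence you establish, which would be a reasonable first step toward resolving, or refuting, the conjecture.
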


As we have noticed that the class of rotationally symmetric norm is contained in the class of all $||.||_1-$dominating normalized gauge norm. The Theorem~\ref{theo3.5} and Theorem~\ref{theo3.6} are stands true in this case. Our main result of this section is to show that the decomposition which we have obtained for $H^\al$ is useful to sharpen the expression of all closed subspaces of $H^\al$ invariant under $H^\infty_1(z^n)$, but is not under $H^\infty(z^n)$. 

\begin{thm}\label{theo4.5}
Suppose $\al$ is a continuous rotationally symmetric norm and $\mathcal{M}$ be an $\al$-closed linear subspace of $H^\al$. Suppose $\mathcal{M}$ is invariant under $H^\infty_1(z^n)$, but not under $H^\infty(z^n)$. Then, 
\[  \mathcal{M} = \left (\sum_{i=1}^{k} \oplus \langle \varphi_i \rangle \right ) \oplus z^{2n} \; (J_1 M_\al(z^n) \oplus\cdots\oplus J_r M_\al(z^n))  \]
where $M_\al(z^n) = \left [H^\infty(z^n) \right ]_{\al}$ ,   $J_1,...,J_r$ ($r \leq n$) are $n$-inner functions and $k \leq 2r-1$. Furthermore, for all i = 1,2,\ldots,k , $\varphi_i$ = $(\beta_{1i} + \beta_{2i}z^n)J_1 + (\beta_{3i} + \beta_{4i}z^n)J_2  + \cdots  + (\beta_{2r-1,i} + \beta_{2r,i}z^n)J_r$.
\end{thm}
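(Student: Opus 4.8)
The plan is to deduce the theorem from Theorem~\ref{theo3.5} applied to the Blaschke factor $B(z)=z^n$, and then to sharpen the outcome by discarding the $\al$-closure around the last summand. Theorem~\ref{theo3.5} with $B=z^n$ gives at once
\[ \mathcal{M} = \Big(\sum_{i=1}^{k}\oplus\langle\varphi_i\rangle\Big)\oplus z^{2n}\,\big[J_1 M_\al(z^n)\oplus\cdots\oplus J_r M_\al(z^n)\big]_{\al}, \]
with the $\varphi_i$ and the $n$-inner functions $J_1,\dots,J_r$ ($r\le n$) exactly as stated (for $B=z^n$ the $B$-inner functions appearing in Theorem~\ref{theo3.5} are by definition the $n$-inner functions). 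Thus the whole content of the theorem is the claim that
\[ \mathcal{N}:=J_1 M_\al(z^n)\oplus\cdots\oplus J_r M_\al(z^n) \]
is already $\al$-closed in $H^\al$, i.e.\ $[\mathcal{N}]_\al=\mathcal{N}$; this is precisely the extra strength that the rotationally symmetric hypothesis should provide, via the decomposition of Lemma~\ref{lem4.2}.

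Before establishing the closedness of $\mathcal{N}$, I would record two preliminary facts. First, let $P_0,\dots,P_{n-1}$ denote the projections of $H^\al$ onto the summands $M_\al(z^n),zM_\al(z^n),\dots,z^{n-1}M_\al(z^n)$ of Lemma~\ref{lem4.2}. The averaging formula appearing in the proof of that lemma identifies $P_{n-j}g$ with $\frac1n\sum_{p=0}^{n-1}\omega^{pj}g(\omega^p z)$ (where $\omega=e^{2\pi i/n}$), and since $\al$ is rotationally symmetric this forces $\al(P_i g)\le\al(g)$ for every $i$ and every $g\in H^\al$; Lemma~\ref{lem4.2} also shows that any $f\in H^\al$ depending only on $z^n$ already lies in $M_\al(z^n)$. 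Second, each $n$-inner function $J_j$ splits as $J_j=\sum_{i=0}^{n-1}z^i J_{ij}$ with $J_{ij}\in H^\infty(z^n)$, by applying Lemma~\ref{lem2.9} with $p=\infty$ to the $n$-inner functions $1,z,\dots,z^{n-1}$; and the defining identity $A^*A=I$ of the $n$-inner matrix $A=(J_{ij})_{n\times r}$ reads, pointwise a.e.\ on $\T$, as $\sum_{i=0}^{n-1}\overline{J_{ij}}\,J_{ij'}=\delta_{jj'}$.

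Granting these, the closedness argument would proceed as follows. Let $g\in[\mathcal{N}]_\al$ and pick $g_k=\sum_{j=1}^r J_j f_j^{(k)}\to g$ in $\al$-norm with $f_j^{(k)}\in M_\al(z^n)$. Regrouping by powers of $z$ modulo $n$ yields $g_k=\sum_{i=0}^{n-1}z^i\big(\sum_{j=1}^r J_{ij}f_j^{(k)}\big)$; each inner sum depends only on $z^n$ and lies in $H^\al$, hence in $M_\al(z^n)$, so by uniqueness in Lemma~\ref{lem4.2} this is the canonical decomposition of $g_k$ and $z^{-i}P_i g_k=\sum_j J_{ij}f_j^{(k)}$. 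Multiplying by $\overline{J_{ij}}$, summing over $i$, and using $\sum_i\overline{J_{ij}}J_{ij'}=\delta_{jj'}$ recovers $f_j^{(k)}=\sum_{i=0}^{n-1}\overline{J_{ij}}\,(z^{-i}P_i g_k)$; since multiplication by $\overline{J_{ij}}\in L^\infty$ and by $z^{-i}$ is bounded on $L^\al$, and $\al(P_i h)\le\al(h)$, this gives
\[ \al\big(f_j^{(k)}-f_j^{(m)}\big)\le\Big(\sum_{i=0}^{n-1}\|J_{ij}\|_\infty\Big)\,\al\big(g_k-g_m\big). \]
Hence each $\{f_j^{(k)}\}_k$ is $\al$-Cauchy and converges in the closed subspace $M_\al(z^n)$ to some $f_j$; and $\al(J_j f_j^{(k)}-J_j f_j)\le\|J_j\|_\infty\al(f_j^{(k)}-f_j)\to0$ then shows $g=\sum_j J_j f_j\in\mathcal{N}$. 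Substituting $[\mathcal{N}]_\al=\mathcal{N}$ into the formula for $\mathcal{M}$ above completes the argument.

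I expect the main obstacle to lie in the bookkeeping of the last two paragraphs: first, making precise that the matrix condition $A^*A=I$ for $n$-inner functions is equivalent to the pointwise relations $\sum_i\overline{J_{ij}}J_{ij'}=\delta_{jj'}$; and, more importantly, verifying that the $z$-homogeneous components $\sum_j J_{ij}f_j^{(k)}$ of $g_k$ genuinely lie in $M_\al(z^n)$ — not merely in $H^1$ — so that the uniqueness part of Lemma~\ref{lem4.2} applies and the contractive projections $P_i$ may be invoked. Everything after that (contractivity of the $P_i$, boundedness of the multiplication operators $h\mapsto\overline{J_{ij}}h$ and $h\mapsto z^{-i}h$ on $L^\al$, and completeness of $M_\al(z^n)$) is routine.
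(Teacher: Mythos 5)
Your reduction of Theorem~\ref{theo4.5} to the single claim that $\mathcal{N}=J_1 M_\al(z^n)\oplus\cdots\oplus J_r M_\al(z^n)$ is already $\al$-closed is exactly the paper's reduction, and your proof of that claim is correct, but it goes by a genuinely different route. The paper first identifies $W\cap H^\infty$ with $J_1H^\infty(z^n)\oplus\cdots\oplus J_rH^\infty(z^n)$ using the coefficient bounds of Lemma~\ref{lem2.9}; then, given $f$ in the closure, it multiplies by a constructed outer function $O\in H^\infty(z^{2n})$ so that $Of\in W\cap H^\infty$, writes $Of=J_1g_1+\cdots+J_rg_r$, and uses the pointwise consequence $\lvert Of_1\rvert^2+\cdots+\lvert Of_n\rvert^2=\lvert g_1\rvert^2+\cdots+\lvert g_r\rvert^2$ of $A^*A=I$ to show each $g_j/O$ lies in $L^\al\cap H^1=H^\al$ (the $H^1$ membership exploiting that $O$ is outer) and hence, via the Cesaro-mean Lemma~\ref{lem4.1}, in $M_\al(z^n)$. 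You instead build bounded coefficient-recovery maps $g\mapsto\sum_i\overline{J_{ij}}\,z^{-i}P_ig$ from the contractive averaging projections implicit in Lemma~\ref{lem4.2} together with the same pointwise relation $\sum_i\overline{J_{ij}}J_{ij'}=\delta_{jj'}$, so that an $\al$-Cauchy sequence in $\mathcal{N}$ automatically has $\al$-Cauchy coefficient sequences; this buys you a direct closedness proof that avoids the outer-function trick, the computation of $W\cap H^\infty$, and the division step (which in the paper needs Smirnov-type reasoning to conclude $g_j/O\in H^1$). The two worries you flag at the end are not actual obstacles: the paper itself reads $A^*A=I$ as the a.e.\ pointwise identity (that is precisely how it derives the displayed modulus identity from equations (\ref{eq4.8}) and (\ref{eq4.9})), and the homogeneous components $\sum_jJ_{ij}f_j^{(k)}$ lie in $M_\al(z^n)$ simply because $H^\infty(z^n)\cdot M_\al(z^n)\subset M_\al(z^n)$ (multiplication by an $L^\infty$ function is $\al$-bounded), after which uniqueness of the Lemma~\ref{lem4.2} decomposition, read off from Fourier supports, legitimizes identifying these components with $z^{-i}P_ig_k$ and the contractivity $\al(P_ig)\le\al(g)$ follows from rotational symmetry applied to the averaging formula.
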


\begin{proof}
For $B(z) = z^n$, we have by Theorem~\ref{theo3.5} that there exist $n$-inner functions $J_1,\ldots, J_r$ such that $\mathcal{M}$ is of the form:
\begin{equation}\label{eq4.3}
\mathcal{M} = \left (\sum_{i=1}^{k} \oplus \langle \varphi_i \rangle \right ) \oplus z^{2n} \; \left [ J_1 M_\al(z^n) \oplus \cdots \oplus J_r M_\al(z^n) \right ]_{\al} 
\end{equation}
where the natural number $k$ and the functions $\varphi_i$ satisfy the hypothesis of Theorem~\ref{theo4.5}. 
Consider the closed subspace $W = \left [ J_1 M_\al(z^n) \oplus \cdots \oplus J_r M_\al(z^n) \right ]_{\al}$. 
\par
We first claim that 
\begin{equation}\label{eq4.4}
 W \cap H^\infty = J_1 H^\infty(z^n) \oplus\cdots\oplus J_r H^\infty(z^n).   
\end{equation} 
Note that $W = \left [ J_1 H^\infty(z^n) \oplus \cdots \oplus J_r H^\infty(z^n) \right ]_{\al}$. For any $g\in W\cap H^\infty$, there exists a sequence $\{g_m\} \subset J_1 H^\infty(z^n) \oplus\cdots\oplus J_r H^\infty(z^n)$ such that $g_m\to g$ in $H^\al$. We can write $g_m=J_1 g_m^{(1)} +\cdots+ J_r g_m^{(r)}$ where $g_m^{(i)}\in H^\infty (z^n)$.
\par
By Lemma~\ref{lem2.9}, we conclude that ${g_m^{(i)}}$ is a Cauchy sequence in $H^1$ and hence converges to some $g^{(i)}\in H^1$. Consequently, $g_m\to J_1 g^{(1)}+\cdots+J_r g^{(r)}$ in $H^1$.
Since $g\in H^\infty$, Lemma~\ref{lem2.9} implies $g^{(i)}\in H^\infty (z^n)$. So, $g \in J_1 H^\infty(z^n) \oplus\cdots\oplus J_r H^\infty(z^n)$. This means that $W \cap H^\infty \subset J_1 H^\infty(z^n) \oplus\cdots\oplus J_r H^\infty(z^n)$. The reverse containment is trivial.
\par
In order to finish off the proof, it remains to be established that $W= J_1 M_\al(z^n) \oplus \cdots \oplus J_r M_\al(z^n)$.\\
It is easy to see that $J_1 M_\al(z^n) \oplus \cdots \oplus J_r M_\al(z^n) \subset W$. We now establish the reverse containment. Let $f \in W$. As demonstrated in the proof of Theorem~\ref{theo3.5}, we can construct an outer function $O \in H^\infty(z^{2n})$ such that $ O f \in W \cap H^\infty$.\\ 
By equation (\ref{eq4.4}), there exist functions $g_1,\ldots,g_r\in H^\infty(z^n)\subset M_\al(z^n)$ such that:
\begin{equation}\label{eq4.5}
    O f = J_1 g_1 + J_2 g_2+\cdots+ J_r g_r.
\end{equation}
By Lemma~\ref{lem4.2}, we can express $f$ as
   \[ f =  f_1 + z f_2 +\cdots+ z^{n-1} f_n \]
for some $f_1,...,f_n \in M_\al(z^n)$. So, 
\begin{equation}\label{eq4.6}
    O f = O f_1 + z O f_2 +\cdots+ z^{n-1} O f_n.
\end{equation}
\par
For the tuple $(J_1,\ldots,J_r)$ of $n$-inner functions, we have a matrix $A = (\varphi_{ij})_{n \times r}$ of functions in $H^\infty (z^n)$ such that $A^* A = I$.\\
The $n$-inner functions $J_j$ can thus be written as
\begin{equation}\label{eq4.7}
J_j = \sum\limits_{i=0}^{n-1} z^i \varphi_{ij} \; \;  , \; \; 1 \leq j \leq r.
\end{equation}
From equations (\ref{eq4.5}), (\ref{eq4.6}), and (\ref{eq4.7}), we conclude that for each $i = 1,2,\ldots,n$:
\begin{center}
    $ O f_i$ = $\varphi_{i-1,1} g_1 + \varphi_{i-1,2} g_2 +\cdots+ \varphi_{i-1,r} g_r .$
\end{center}
Therefore,
\begin{equation}\label{eq4.8}
\begin{pmatrix}
    O f_1 \\
    \vdots\\
    Of_n
\end{pmatrix}
\; = \;  A \; \begin{pmatrix}
    g_1 \\
    \vdots\\
    g_r
\end{pmatrix}
.
\end{equation}
\smallskip
Taking the conjugate transpose, we have
\begin{equation}\label{eq4.9}
\begin{pmatrix}
    \overline{O f_1} & \cdots & \overline{O f_n}
\end{pmatrix}
\; = \; \begin{pmatrix}
    \overline{g_1} & \cdots & \overline{g_r}
\end{pmatrix} \; A^*.
\end{equation}
Multiplying equations (\ref{eq4.7}) and (\ref{eq4.8}) yields
$$\lvert O f_1\rvert^2 +\cdots+ \lvert O f_n\rvert^2 = \lvert g_1\rvert^2 +\cdots + \lvert g_r\rvert^2.$$
Therefore, for $1 \le i \le r$, 
\[ \left \lvert \frac{g_i}{O}  \right \rvert \leq  \lvert f_1\rvert +\cdots+ \lvert f_n\rvert. \]
Since $ \lvert f_{1} \rvert + \cdots + \lvert f_{n} \rvert \in L^\al $, we have, $\al \left( \dfrac{ g_i } {O} \right) = \al \left(\left \lvert \dfrac{ g_i } {O} \right \rvert \right)$ $\leq \al (\lvert f_1\rvert+\cdots+\lvert f_n\rvert) < \infty.$ So, $ \dfrac{g_{i} } {O} \in L^\al$.\\ 
Further, $ \dfrac{ g_i } {O} \in H^1$ because $O$ is outer. Hence, $ \dfrac{ g_i } {O} \in H^\al$. By Lemma~\ref{lem4.1}, the Cesaro means  $\sigma_l\left(\dfrac{g_i}{O}\right)$ converge to $\dfrac{g_i}{O}$ in $H^\al$. Since $\sigma_l\left(\dfrac{g_i}{O}\right)$ is a polynomial in $z^n$, so $\dfrac{g_i}{O}\in M_\al(z^n)$.\\
Hence, $f = J_1 \dfrac{g_1}{O} + J_2 \dfrac{g_2}{O}+\cdots+ J_r \dfrac{g_r}{O}$ 
$\in J_1 M_\al(z^n) \oplus\cdots\oplus J_r M_\al(z^n)$. This establishes that $ W = J_1 M_\al(z^n) \oplus\cdots\oplus J_r M_\al(z^n)$.\\ 
Hence equation (\ref{eq4.3}) can be written as:
\[ \mathcal{M} = \left (\sum_{i=1}^{k} \oplus \langle \varphi_i \rangle \right ) \oplus z^{2n} \;  (J_1 M_\al(z^n) \oplus\cdots\oplus J_r M_\al(z^n)). \]
\end{proof}

We now derive the $T_{z^n}$-invariant subspaces of $H^\al$, when $\al$ is a continuous rotationally symmetric norm. The proof proceeds on similar lines as that of Theorem~\ref{theo4.5}, so we only present a sketch. 

\begin{thm}\label{theo4.6}
Let $\al$ be a continuous rotationally symmetric norm and $\mathcal{M}$ be a closed subspace of $H^\al$. Suppose $\mathcal{M}$ is invariant under the operator $T_{z^n}$. Then there exist $n$-inner functions $J_1,\ldots,J_r$ ($r\le n$) such that
\[  \mathcal{M} = J_1 M_\al(z^n) \oplus\cdots\oplus J_r M_\al(z^n).  \]
\end{thm}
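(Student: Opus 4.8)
The plan is to run the proof of Theorem~\ref{theo4.5} with the non-$T_{z^n}$-invariant summand $\sum_{i=1}^{k}\oplus\langle\varphi_i\rangle$ suppressed. First I would apply Theorem~\ref{theo3.6} with $B(z)=z^n$: since $\mathcal{M}$ is invariant under $T_{z^n}$, there exist $n$-inner functions $J_1,\ldots,J_r$ ($r\le n$) with
\[ \mathcal{M}=\bigl[J_1 M_\al(z^n)\oplus\cdots\oplus J_r M_\al(z^n)\bigr]_\al, \]
and, as recorded inside the proof of Theorem~\ref{theo3.6} (via Theorem~\ref{theo3.4}), the sharper identity $\mathcal{M}\cap H^\infty=J_1 H^\infty(z^n)\oplus\cdots\oplus J_r H^\infty(z^n)$. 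Writing $W=\mathcal{M}$, all that remains is to drop the $\al$-closure, i.e.\ to prove $W=J_1 M_\al(z^n)\oplus\cdots\oplus J_r M_\al(z^n)$ as an algebraic direct sum; the inclusion $\supseteq$ is clear, so only the reverse inclusion needs work.

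For the reverse inclusion I would take $f\in W=\mathcal{M}$ and reproduce the outer-function construction of Lemma~\ref{lem3.11} and Theorem~\ref{theo3.5}: factor $f=IO_0$ with $I$ inner and $O_0$ outer, write $f=I\,O_0^{1/2}\,O_0^{1/2}$, decompose each of the three $H^2$ factors along the basis~(\ref{eq2.1}), and multiply the associated exponential outer functions to obtain an outer function $O\in H^\infty(z^n)$ with $\lvert O\rvert\le1$ and $Of\in\mathcal{M}\cap H^\infty$. By the identity for $\mathcal{M}\cap H^\infty$, $Of=J_1 g_1+\cdots+J_r g_r$ for some $g_j\in H^\infty(z^n)$. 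Separately, Lemma~\ref{lem4.2} gives $f=f_1+z f_2+\cdots+z^{n-1}f_n$ with $f_i\in M_\al(z^n)$, while the $n$-inner matrix $A=(\varphi_{ij})_{n\times r}$ of functions in $H^\infty(z^n)$ with $A^*A=I$ gives $J_j=\sum_{i=0}^{n-1}z^i\varphi_{ij}$.

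Comparing the $z^i$-components of the two expressions for $Of$ — legitimate because the decomposition of Lemma~\ref{lem4.2} is direct and each $Of_{i+1}$ and each $\sum_{j}\varphi_{ij}g_j$ lies in $M_\al(z^n)$ — yields $Of_i=\varphi_{i-1,1}g_1+\cdots+\varphi_{i-1,r}g_r$ for $i=1,\ldots,n$, i.e.\ $(Of_1,\ldots,Of_n)^{t}=A\,(g_1,\ldots,g_r)^{t}$. Taking conjugate transposes as in the proof of Theorem~\ref{theo4.5} and using $A^*A=I$ gives the pointwise identity $\lvert Of_1\rvert^2+\cdots+\lvert Of_n\rvert^2=\lvert g_1\rvert^2+\cdots+\lvert g_r\rvert^2$ a.e.\ on $\T$, hence $\lvert g_j/O\rvert\le\lvert f_1\rvert+\cdots+\lvert f_n\rvert\in L^\al$ for each $j$. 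Therefore $g_j/O\in L^\al$; since $O$ is outer, $g_j/O\in H^1$, so $g_j/O\in H^1\cap L^\al=H^\al$; and since $g_j/O$ is a function of $z^n$, its Cesàro means are polynomials in $z^n$ converging to it in the $\al$-norm by Lemma~\ref{lem4.1}, so $g_j/O\in M_\al(z^n)$. Thus $f=J_1(g_1/O)+\cdots+J_r(g_r/O)\in J_1 M_\al(z^n)\oplus\cdots\oplus J_r M_\al(z^n)$. Directness of this sum is inherited as in Theorem~\ref{theo4.5}: a relation $\sum_{j}J_j h_j=0$ with $h_j\in M_\al(z^n)$ forces each $h_j=0$ (for instance by Lemma~\ref{lem2.9} applied to the zero function). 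Combining the two inclusions gives $\mathcal{M}=J_1 M_\al(z^n)\oplus\cdots\oplus J_r M_\al(z^n)$.

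The step I expect to be the main obstacle is the one shared with Theorem~\ref{theo4.5}: producing a single outer function $O\in H^\infty(z^n)$ that both makes $Of$ bounded and keeps the quotients $g_j/O$ inside $H^\al$ — the latter being exactly what the $A^*A=I$ majorization delivers. Everything after that, namely the matrix computation and the passage to Cesàro means, is a routine transcription of the corresponding portion of Theorem~\ref{theo4.5}.
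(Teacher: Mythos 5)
Your proposal is correct and takes essentially the same route as the paper: the paper's proof also reduces to $\mathcal{M}\cap H^\infty=J_1H^\infty(z^n)\oplus\cdots\oplus J_rH^\infty(z^n)$ via Theorem~\ref{theo3.4} and Lemmas~\ref{lem3.9} and \ref{lem3.11}, and then removes the $\al$-closure exactly as in Theorem~\ref{theo4.5} by constructing the outer function $O$, using the $A^*A=I$ majorization $\lvert g_j/O\rvert\le\lvert f_1\rvert+\cdots+\lvert f_n\rvert$, and passing to Ces\`aro means. Your write-up simply spells out the steps the paper leaves as ``proceed as in the proof of Theorem~\ref{theo4.5}.''
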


\begin{proof}
On the same lines as in the proof of Theorem~\ref{theo3.5}, we can show that $\mathcal{M} \cap H^\infty$ is a non-trivial weak*-closed subspace of $H^\infty$ which is invariant under $T_{z^n}$. \\
By Theorem~\ref{theo3.4}, there exist $n$-inner functions $J_1,\ldots,J_r$ (with $r \leq n$ ) such that
$$ \mathcal{M} \cap H^\infty = J_1 H^\infty(z^n) \oplus \cdots \oplus J_r H^\infty(z^n).$$
Therefore,
$$[\mathcal{M}\cap H^\infty]_\al = [J_1 H^\infty(z^n) \oplus \cdots \oplus J_r H^\infty(z^n)]_\al.$$
Further, in view of Lemma~\ref{lem3.11} it can be deduced that $[\mathcal{M} \cap H^\infty]_\al=\mathcal{M}$. Also, it has been established in Theorem~\ref{theo3.5} that $[J_1 H^\infty(z^n) \oplus \cdots \oplus J_r H^\infty(z^n)]_\al=[J_1 M_\al(z^n)\oplus\cdots\oplus J_r M_\al(z^n)]_\al$.\\
So, $\mathcal{M}$ takes the form:
$$\mathcal{M}= [J_1 M_\al(z^n) \oplus \cdots \oplus J_r M_\al(z^n)]_\al.$$
\par
In order to show that $\mathcal{M}$ takes the desired form it suffices to establish that 
$$\mathcal{M} \subset J_1 M_\al(z^n) \oplus \cdots \oplus J_r M_\al(z^n).$$
Note that for any $f\in M$, there exists an outer function $O$ such that $Of$ takes the form:
$$Of = J_1 g_1 +\cdots+ J_r g_r~~~(\; g_i\in H^\infty(z^n)\;).$$ 
Now, proceeding as in the proof of Theorem~\ref{theo4.5}, we find that
$f= J_1 \dfrac{g_1}{O} + J_2 \dfrac{g_2}{O}+\cdots+ J_r \dfrac{g_r}{O}$. This completes the proof because $\dfrac{g_i}{O} \in M_\al(z^n)$.
\end{proof}
\section{General Inner-Outer Factorization in $H^\al$}\label{sec5}

In this section, we derive the general inner-outer factorization of $H^\al$ functions. The definition of inner and $n-$inner functions have been discussed beforehand.   
The outer function in $H^\al$ is studied by Chen in \cite{Chen2014LebesgueAH}. Let $f\in H^\al$ is outer if $f$ is a cyclic vector for $H^\infty$ acting on $H^\al$ which means $[f \cdot H^\infty]_\al = H^\al$.
We now define an $n$-outer function in $H^\al$.
\begin{defn}\label{defn5.1}
Suppose $\al$ is continuous rotationally symmetric norm and $f\in H^\al$. We say $f$ is $n$-outer function in $H^\al$ if:
\begin{equation}\label{eq5.1}
[f \cdot H^\infty(z^n)]_\al = \; p(z)\; M_\al(z^n)
\end{equation}
where $p(z)$ is a polynomial of degree less than $n$ and $M_\al(z^n) = [H^\infty(z^n)]_\al$. 
\end{defn}
\begin{rem}\label{rem5.2}
Observe that for any $f \in H^\al$, $[f \cdot H^\infty(z^n)]_\al = \bigvee\limits_{k=0}^{\infty} \{ f \cdot z^{nk}\}$ where $\bigvee\limits_{k=0}^{\infty} \{ f \cdot z^{nk}\}$ is an $\al$-closed linear span of $\{  f \cdot z^{nk} : k = 0,1,\ldots \}$.
\end{rem}
\noindent
It is trivial to show that $\bigvee\limits_{k=0}^{\infty} \{ f \cdot z^{nk}\} \subset [f \cdot H^\infty(z^n)]_\al$.\\
For the reverse inclusion, let $g \in H^\infty(z^n)$. 
Then, the sequence of Cesaro means $\{ \sigma_m(g) \}$ converges to $g$ in the weak* topology. Therefore,
$$\int_{\T} \sigma_m(g) h dm \longrightarrow  \int_{\T} g h dm ~~~~\forall ~~h \in L^1.$$
Thus, for any non-zero element $u \in \mathcal{L}^{\al^\prime}$, we see that
$$\int_{\T} \sigma_m(g) f u dm \longrightarrow  \int_{\T} g f u dm.$$
Therefore, sequence $\{\sigma_m(g) f\}$ converge weakly to $gf$ in $L^\al$. But each $\sigma_m(g)$ is a polynomial in $z^n$, so $\sigma_m(g) f \in \bigvee\limits_{k=0}^{\infty} \{ f \cdot z^{nk}\}$. Since $\bigvee\limits_{k=0}^{\infty} \{ f \cdot z^{nk}\}$, being a closed subspace of $H^\al$ is also weakly closed, and hence $g \cdot f \in \bigvee\limits_{k=0}^{\infty} \{ f \cdot z^{nk}\}$. 
\begin{rem}
Note that when $f$ is $n$-outer function in $H^\al$ such that $f \in M_\al(z^n)$ then 
$$ [f \cdot H^\infty(z^n)]_\al = M_\al(z^n).$$
\end{rem}
\noindent
This can be seen as follows:\\
Since $f$ is $n$-outer in $H^\al$, so we can write $f(z)=p(z) g(z)$ for some $g\in M_\al(z^n)$. Since the $deg\; p(z)<n$, we can write $p(z) = a_0 + a_1 z +\cdots+ a_{n-1} z^{n-1}$. Therefore,
$$f(z) = a_0 g(z) + a_1 z g(z) +\cdots+ a_{n-1} z^{n-1} g(z).$$
But $f \in M_\al(z^n)$. This is possible only when $a_1, a_2,\ldots, a_{n-1}$ to be 0. Therefore $p(z)$ is a constant polynomial and the result follows.
\par
\smallskip
We now derive a factorization of $H^\al$ functions into the sum of products of $n$-inner and $n$-outer functions. We use Theorem~\ref{theo4.5} to obtain a general inner-outer factorization of $H^\al$ functions.

\begin{thm}\label{theo5.4}
Suppose $\al$ is continuous rotationally symmetric norm and $f \in H^\al$ then
$$ f = J_1 f_1 + \cdots + J_r f_r \; \; (r \leq n)$$
where, each $J_i$ is $n$-inner and $f_i \in M_\al(z^n)$ is $n$-outer for all $i = 1,\ldots,r$. 
\end{thm}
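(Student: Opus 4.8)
The plan is to peel the classical inner--outer factorization off $f$ one coordinate at a time, using the decomposition of $H^\al$ furnished by Lemma~\ref{lem4.2}. Assume $f\neq 0$. First I would apply Lemma~\ref{lem4.2} to write $f=h_0+zh_1+\cdots+z^{n-1}h_{n-1}$ with each $h_i\in M_\al(z^n)$, and then discard the indices for which $h_i=0$. It then suffices to show that each surviving term $z^i h_i$ can be written as $J_i g_i$ with $J_i$ an $n$-inner function and $g_i\in M_\al(z^n)$ an $n$-outer function; summing over the surviving indices gives the asserted factorization, with $r$ equal to the number of nonzero $h_i$'s, so $r\le n$.

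Fix $i$ with $h_i\neq 0$. Since $h_i\in H^\al\subset H^1$, I would take its classical inner--outer factorization $h_i=\Phi_i\Psi_i$, with $\Phi_i$ inner and $\Psi_i$ outer. Because $h_i$ is a function of $z^n$ we have $h_i(\omega z)=h_i(z)$ with $\omega=e^{2\pi i/n}$, so by uniqueness of the inner--outer factorization (compare the $0$-th Taylor coefficients of the outer factors) $\Phi_i(\omega z)=\Phi_i(z)$ and $\Psi_i(\omega z)=\Psi_i(z)$; thus $\Phi_i$ and $\Psi_i$ are functions of $z^n$, with Fourier coefficients supported on $n\mathbb{Z}_{\ge 0}$. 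Next I would verify $\Psi_i\in M_\al(z^n)$: on $\T$ we have $\Psi_i=h_i\overline{\Phi_i}$ a.e.\ with $\overline{\Phi_i}\in L^\infty$, so $\Psi_i\in L^\al$ (as $L^\infty$ multiplies $L^\al$ into itself), while $\Psi_i\in H^1$; hence $\Psi_i\in H^1\cap L^\al=H^\al$, and since the Cesaro means of $\Psi_i$ are then polynomials in $z^n$, Lemma~\ref{lem4.1} gives $\Psi_i\in[H^\infty(z^n)]_\al=M_\al(z^n)$. Finally, $J_i:=z^i\Phi_i$ is $n$-inner: $|\Phi_i|=1$ a.e.\ gives $\int_\T z^{n(m-m')}|\Phi_i|^2\,dm=\delta_{mm'}$, i.e.\ $\{z^{nm}J_i\}_{m\ge 0}$ is orthonormal in $H^2$. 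So $z^i h_i=J_i\Psi_i$, and what is left is to prove that $\Psi_i$ is $n$-outer.

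To establish $[\Psi_i H^\infty(z^n)]_\al=M_\al(z^n)$ I would introduce the averaging operator $Qg(z)=\frac1n\sum_{j=0}^{n-1}g(\omega^j z)$. By the rotational invariance of $\al$, $Q$ is a norm-contraction on $H^\al$; it is idempotent, its range is exactly $M_\al(z^n)$ (by Lemma~\ref{lem4.1}), and $Q(H^\infty)=H^\infty(z^n)$. Since $\Psi_i$ is a function of $z^n$, one has $Q(\Psi_i g)=\Psi_i\,Qg$, so $Q(\Psi_i H^\infty)=\Psi_i H^\infty(z^n)$. Now $\Psi_i$ is a classically outer function lying in $H^\al$, hence (by the characterization of outer functions in $H^\al$ discussed below) a cyclic vector for $H^\infty$, i.e.\ $[\Psi_i H^\infty]_\al=H^\al$. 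Applying $Q$ and using $Q(\overline{E})\subseteq\overline{Q(E)}$ for every set $E$ (continuity of $Q$), we obtain $M_\al(z^n)=Q(H^\al)=Q([\Psi_i H^\infty]_\al)\subseteq[Q(\Psi_i H^\infty)]_\al=[\Psi_i H^\infty(z^n)]_\al\subseteq M_\al(z^n)$, so equality holds and $\Psi_i$ is $n$-outer. Assembling the three steps finishes the proof.

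The one substantive ingredient is the fact that a classically outer function in $H^\al$ is a cyclic vector for $H^\infty$ --- i.e.\ that the outer functions of $H^\al$ are exactly the classical ones --- which I would either quote from Chen \cite{Chen2014LebesgueAH} or prove directly: take the standard outer approximants $g_k\in H^\infty$ with $|\Psi_i g_k|\le 1$ a.e.\ and $\Psi_i g_k\to 1$ a.e.\ on $\T$, and pass to the limit via a bounded convergence principle for $\al$ --- uniformly bounded functions converging a.e.\ converge in the $\al$-norm --- which follows from Egorov's theorem together with the continuity of $\al$ ($\al(\chi_E)\to 0$ as $m(E)\to 0^+$); this gives $\al(\Psi_i g_k-1)\to 0$, hence $1\in[\Psi_i H^\infty]_\al$, and cyclicity follows since the polynomials are $\al$-dense in $H^\al$ (Lemma~\ref{lem4.1}). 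It is this $\al$-bounded-convergence step, rather than the elementary bookkeeping around $Q$, where I expect the main difficulty to lie.
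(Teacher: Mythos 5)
Your proof is correct, but it takes a genuinely different route from the paper's. The paper applies the $T_{z^n}$-invariant-subspace theorem (Theorem~\ref{theo4.6}) to $\mathcal{M}=[f\cdot H^\infty(z^n)]_\al$, writes $f=J_1f_1+\cdots+J_rf_r$ with respect to the resulting decomposition $\mathcal{M}=J_1M_\al(z^n)\oplus\cdots\oplus J_rM_\al(z^n)$, and then shows each $f_i$ is $n$-outer by a contradiction argument. You bypass the invariant-subspace machinery entirely: Lemma~\ref{lem4.2} plus the classical $H^1$ inner--outer factorization of each component $h_i$, together with the averaging projection $Q$, produce the factorization explicitly, with $J_i=z^i\Phi_i$ and $f_i=\Psi_i$. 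The individual steps check out: the rotation-invariance of $\Phi_i$ and $\Psi_i$ follows from uniqueness of the factorization as you say (an outer function does not vanish at the origin, so the unimodular constant is $1$); the passage $\Psi_i=h_i\overline{\Phi_i}\in H^1\cap L^\al=H^\al$ and then $\Psi_i\in M_\al(z^n)$ via its Ces\`{a}ro means is exactly the device the paper itself uses for $g_i/O$ in Theorem~\ref{theo4.5}; and the chain $M_\al(z^n)=Q(H^\al)\subseteq[\Psi_iH^\infty(z^n)]_\al\subseteq M_\al(z^n)$ is sound because $Q$ is an $\al$-contraction by rotational symmetry and commutes with multiplication by functions of $z^n$. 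Your route buys more explicitness -- the $f_i$ are genuinely (classically) outer, hence $n$-outer with $p\equiv 1$, and the family $\{z^{nm}J_i\}$ is even jointly orthonormal -- at the cost of the one external ingredient you correctly isolate: that a classically outer function in $H^\al$ is a cyclic vector for $H^\infty$. That fact is available in \cite{Chen2014LebesgueAH} (Section~\ref{sec5} of the paper quotes the cyclic-vector notion of outer from there), and your bounded a.e.-convergence principle for $\al$ is also an immediate consequence of Lemma~\ref{lem3.8}(1), so I see no genuine gap.
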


\begin{proof}
Consider $\mathcal{M} = [f \cdot H^\infty(z^n)]_\al$. Then $\mathcal{M}$ is a closed subspace of $H^\al$ which is invariant under $T_{z^n}$. By Theorem~\ref{theo4.5}, there exist $n$-inner functions $J_1,\ldots,J_r$ such that
$$ \mathcal{M} = J_1 M_\al(z^n) \oplus \cdots \oplus J_r M_\al(z^n).$$
Since $f \in \mathcal{M}$ , there exists $f_1 , \ldots, f_r \in M_\al(z^n)$ such that
$$ f = J_1 f_1 + \cdots + J_r f_r.$$
We claim that each $f_i$ is $n$-outer.\\
If possible assume that some $f_i$ is not $n$-outer function, then
$$ [f \cdot H^\infty(z^n)]_\al  \subsetneq M_\al(z^n).$$
We can write
$$ J_i [f \cdot H^\infty(z^n)]_\al \subsetneq J_i M_\al(z^n).$$
Note that, 
\begin{align*}
\mathcal{M} &= [f \cdot H^\infty(z^n)]_\al\\
&\subset [ J_1 f_1 H^\infty(z^n) \oplus \cdots \oplus J_i f_i H^\infty(z^n) \oplus \cdots \oplus J_r f_r H^\infty(z^n)]_\al\\ 
&\subset [ J_1 [f_1 \cdot H^\infty(z^n)]_\al \oplus \cdots \oplus J_i [f_i\cdot H^\infty(z^n)]_\al \oplus \cdots \oplus J_r [f_r \cdot H^\infty(z^n)]_\al]_\al\\
&\subsetneq [ J_1 M_\al(z^n) \oplus \cdots \oplus J_i M_\al(z^n) \oplus \cdots \oplus J_r  M_\al(z^n)]_\al\\
&= \mathcal{[M]_\al}
\end{align*}
i.e., $\mathcal{M} \subsetneq \mathcal{M}$ which is a contradiction. Therefore, each $f_i$ is $n$-outer function and thus the proof is complete.
\end{proof}

\begin{rem}
The factorization established in Theorem~\ref{theo5.4} is different to the one presented in \cite{Lance1997multiplication} for classical $H^p$ spaces. In \cite{Lance1997multiplication}, the authors decomposed every $H^p$ function as a product of $z^n$-$p$-inner and $p$-outer functions. Whereas, we are decomposing $H^\al$ in terms of $n$-inner and $n$-outer functions. \\
A function $\varphi \in H^\infty$ is called a $z^n$-$p$-inner function if $\int_\T  |\varphi|^p z^n dm<\infty$. We have not attempted to define a concept on the similar lines for $H^\al$.
\end{rem}

\begin{cor}[Singh and Thukral~\cite{singh1997multiplication}]
Suppose $f$ is in $H^2$ then
$$f = \varphi_1 f_1 + \cdots + \varphi_r f_r \; \; (r \leq n)$$ 
where for each $i$, $f_i \in H^2(z^2)$ and is $n$-outer and $\varphi_i$ is $n$-inner.
\end{cor}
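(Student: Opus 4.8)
The plan is to read off this corollary as the special case $\al=\|\cdot\|_2$ of Theorem~\ref{theo5.4}. First I would check that $\|\cdot\|_2$ is a continuous rotationally symmetric norm on $L^\infty$: one has $\|1\|_2=1$, $\||f|\|_2=\|f\|_2$, and $\|f_w\|_2=\|f\|_2$ for every $w\in\T$ since normalized Lebesgue measure on $\T$ is rotation invariant, while continuity is immediate from $\|\chi_E\|_2=m(E)^{1/2}\to 0$ as $m(E)\to 0^{+}$. Hence $H^{\|\cdot\|_2}=H^2$, and $M_{\|\cdot\|_2}(z^n)=[H^\infty(z^n)]_{\|\cdot\|_2}$ coincides with $H^2(z^n)$: indeed $H^\infty(z^n)\subset H^2(z^n)$ because composition with $z^n$ is an isometry of $H^2$ carrying $H^\infty$ into $H^2(z^n)$, and conversely the analytic polynomials in $z^n$ lie in $H^\infty(z^n)$ and are $\|\cdot\|_2$-dense in $H^2(z^n)$ by the very definition of $H^2(z^n)$.

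With these identifications in hand, applying Theorem~\ref{theo5.4} to $f\in H^2=H^{\|\cdot\|_2}$ produces $n$-inner functions $J_1,\ldots,J_r$ (with $r\le n$) and functions $f_1,\ldots,f_r\in M_{\|\cdot\|_2}(z^n)=H^2(z^n)$, each $n$-outer in $H^2$, such that $f=J_1 f_1+\cdots+J_r f_r$. It then only remains to see that the $n$-outer condition of Definition~\ref{defn5.1} reduces, for functions already lying in $M_{\|\cdot\|_2}(z^n)$, to the classical cyclicity condition used by Singh and Thukral. Since each $f_i\in H^2(z^n)=M_{\|\cdot\|_2}(z^n)$, the Remark following Definition~\ref{defn5.1} forces the polynomial $p$ in the relation $[f_i\cdot H^\infty(z^n)]_{\|\cdot\|_2}=p(z)\,M_{\|\cdot\|_2}(z^n)$ to be constant, so that $[f_i\cdot H^\infty(z^n)]_{\|\cdot\|_2}=H^2(z^n)$; equivalently, by Remark~\ref{rem5.2}, $\bigvee_{k\ge 0}\{f_i z^{nk}\}=H^2(z^n)$, which is exactly the statement that $f_i$ is a cyclic vector for multiplication by $z^n$ inside $H^2(z^n)$, i.e.\ $n$-outer in the usual sense.

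I do not expect a genuine obstacle here, as the corollary is a direct specialization. The only points requiring a line of verification are the density identity $M_{\|\cdot\|_2}(z^n)=H^2(z^n)$ and the cosmetic matching of the two notions of $n$-outer just described; everything else is supplied verbatim by Theorem~\ref{theo5.4}.
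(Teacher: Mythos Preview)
Your proposal is correct and is precisely the approach the paper intends: the corollary is stated without proof immediately after Theorem~\ref{theo5.4} as its specialization to $\al=\|\cdot\|_2$, and your verifications that $\|\cdot\|_2$ is a continuous rotationally symmetric norm and that $M_{\|\cdot\|_2}(z^n)=H^2(z^n)$ supply exactly the identifications needed to read it off.
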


\begin{thm}
Every $n$-outer function which belongs to $M_\al(z^n)$ is an outer function.
\end{thm}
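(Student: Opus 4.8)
The plan is to reduce this to the direct-sum decomposition $H^\al = M_\al(z^n) \oplus z M_\al(z^n) \oplus\cdots\oplus z^{n-1} M_\al(z^n)$ established in Lemma~\ref{lem4.2}. The idea is that cyclicity of $f$ for $H^\infty(z^n)$ inside $M_\al(z^n)$ already produces, one summand at a time, every $z^k M_\al(z^n)$ inside $[f\cdot H^\infty]_\al$, and these summands together exhaust $H^\al$, so $f$ is automatically cyclic for $H^\infty$.

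First I would upgrade the hypothesis: since $f$ is $n$-outer and $f\in M_\al(z^n)$, the remark immediately preceding Theorem~\ref{theo5.4} gives $[f\cdot H^\infty(z^n)]_\al = M_\al(z^n)$ (equivalently, from Definition~\ref{defn5.1} the polynomial $p$ there must be constant once $f\in M_\al(z^n)$, because the decomposition of Lemma~\ref{lem4.2} is a direct sum). Next, fix $k\in\{0,1,\ldots,n-1\}$. Since $z^k g\in H^\infty$ whenever $g\in H^\infty(z^n)$, we have $z^k f\,H^\infty(z^n)\subseteq f H^\infty\subseteq[f\cdot H^\infty]_\al$. Multiplication by $z^k$ is an isometry on $H^\al$ (as $\lvert z^k\rvert=1$ a.e.\ and $\al$ is a rotationally symmetric, hence normalized gauge, norm), so it is $\al$-continuous; applying it term by term to an $\al$-approximating sequence from $f\,H^\infty(z^n)$ of an arbitrary element of $M_\al(z^n)=[f\cdot H^\infty(z^n)]_\al$ shows $z^k M_\al(z^n)\subseteq[z^k f\,H^\infty(z^n)]_\al\subseteq[f\cdot H^\infty]_\al$.

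Finally, $[f\cdot H^\infty]_\al$ is a closed linear subspace of $H^\al$ containing $z^k M_\al(z^n)$ for every $k=0,1,\ldots,n-1$, hence it contains the algebraic sum of these subspaces, which by Lemma~\ref{lem4.2} is all of $H^\al$; therefore $[f\cdot H^\infty]_\al=H^\al$, i.e.\ $f$ is outer. I do not expect a genuine obstacle here; the only points needing a sentence of care are that an isometry carries $\al$-convergent sequences to $\al$-convergent sequences (used to pass from $f\,H^\infty(z^n)$ to $z^k f\,H^\infty(z^n)$) and that the algebraic direct sum in Lemma~\ref{lem4.2} is already $\al$-closed, so no extra closure step is needed after summing the $z^k M_\al(z^n)$. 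As a sanity check, the case $\al=\|\cdot\|_2$ recovers the corresponding statement for $H^2$, which is consistent with the factorization of Singh and Thukral used in Theorem~\ref{theo4.5}.
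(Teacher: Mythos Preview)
Your proof is correct and follows essentially the same approach as the paper: both use the remark that $[f\cdot H^\infty(z^n)]_\al=M_\al(z^n)$ when $f\in M_\al(z^n)$ is $n$-outer, together with the decomposition of Lemma~\ref{lem4.2} and the fact that multiplication by $z^k$ is an $\al$-isometry. The paper phrases it by decomposing a general $h\in H^\al$ and approximating each component, whereas you show directly that each summand $z^k M_\al(z^n)$ lies in $[f\cdot H^\infty]_\al$; these are the same argument in slightly different packaging.
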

\begin{proof}
Let $f \in M_\al(z^n)$ be an $n$-outer function. By Remark~\ref{rem5.2}, we have
\begin{equation}
[f \cdot H^\infty(z^n)]_\al = M_\al(z^n).
\end{equation}
In order to prove that $f$ is an outer function, we need to show that $H^\al = [f \cdot H^\infty]_\al$.\\
\smallskip
It is trivial to note that $[f \cdot H^\infty]_\al \subset H^\al$. So only the reverse inequality needs to be established.\\
Let $h \in H^\al$ and write $h$ as 
$$ h = h_1 + z h_2 +\cdots+z^{n-1}h_n$$
for some $h_1,\ldots,h_n \in M_\al(z^n)$.\\
By equation (\ref{eq5.1}), we see that each $h_i \in [f \cdot H^\infty(z^n)]_\al $, hence there exists a sequence $
\{fh_k^{(i)}\} \subset f \cdot H^\infty(z^n)$ such that
$$\al(fh_k^{(i)} - h_i) \to 0 \; \; \text{as} \; \; k \to \infty.$$
Therefore,
$$\al(z^i f h_k^{(i)} - z^i h_i) \to 0  \; \;  \text{as} \; \; k \to \infty.$$
This means that
$$ f \cdot \left (h_k^{(1)} + z h_k^{(2)} + \cdots + z^{n-1} h_k^{(n)}\right) \longrightarrow h  \; \; \text{in} \; \; H^\al \; \; \text{as} \; \; k \to \infty.$$
Thus, we have $h \in [f \cdot H^\infty]_\al$. So the result follows.
\end{proof} 

\subsection*{Acknowledgments}
The authors would like to thank Prof. Sachi Srivastava, University of Delhi, India for useful comments during the course of the present research.

\nocite{Hoffman1962banach}
\bibliographystyle{spmpsci}
\bibliography{invariance_in_Halpha.bib}

% ------------------------------------------------------------------------
\end{document}